\newtheorem{theorem}{Theorem}[section]
\newtheorem{lemma}[theorem]{Lemma}
\newtheorem{corollary}[theorem]{Corollary}
\newtheorem{proposition}[theorem]{Proposition}
\newtheorem{remark}[theorem]{Remark}
\newtheorem{definition}[theorem]{Definition}
\newtheorem{example}[theorem]{Example}
\newtheorem{conjecture}[theorem]{Conjecture}
\newcommand{\ra}{\rightarrow}
\newcommand{\longra}{\longrightarrow}
\newcommand{\longlra}{\longleftrightarrow}
\newcommand{\hra}{\hookrightarrow}
\newcommand{\thra}{\twoheadrightarrow}
\newcommand{\beq}{\begin{equation}}
\newcommand{\enq}{\end{equation}}
\newcommand{\beqn}{\begin{equation}}
\newcommand{\enqn}{\end{equation}}
\newcommand{\caA}{\mathcal{A}}
\newcommand{\caC}{\mathcal{C}}
\newcommand{\caE}{\mathcal{E}}
\newcommand{\caF}{\mathcal{F}}
\newcommand{\caM}{\mathcal{M}}
\newcommand{\caO}{\mathcal{O}}
\newcommand{\mC}{\mathbb{C}}
\newcommand{\mP}{\mathbb{P}}
\newcommand{\mZ}{\mathbb{Z}}
\DeclareMathOperator{\tr}{\rm tr}
\DeclareMathOperator{\Ext}{\rm Ext}
\DeclareMathOperator{\Sym}{\rm Sym}
\DeclareMathOperator{\im}{\rm im}
\DeclareMathOperator{\coker}{\rm coker}
\DeclareMathOperator{\rk}{\rm rk}
\DeclareMathOperator{\ord}{\rm ord}
\DeclareMathOperator{\Pic}{\rm Pic}
\DeclareMathOperator{\Spec}{\rm Spec}
\newcommand{\ToDo}[1]{{\textcolor{red}{#1}}}
\title{General infinitesimal variations of Hodge structure of ample curves in surfaces}
\author{Víctor González-Alonso}
\address{Víctor González-Alonso,
Institut für Algebraische Geometrie,
Leibniz Universit\"at Hannover
Welfengarten 1,
30167 Hannover, Germany}
\email{gonzalez@math.uni-hannover.de}
\author{Sara Torelli}
\address{Sara Torelli,
Università di Torino,
Via Verdi 8, 10125 Torino, Italia}
\email{sara.torelli7@gmail.com}
\date{\today}
\thanks{The project is partially supported by the Alexander von Humboldt Foundation.\\ Data sharing not applicable to this article as no datasets were generated or analysed during the current study. None of the authors has any kind of conflict of interest.}
\begin{document}

\begin{abstract}
Given a smooth projective complex curve inside a smooth projective surface, one can ask how its Hodge structure varies when the curve moves inside the surface. In this paper, we develop a general theory to study the infinitesimal version of this question in the case of ample curves. We can then apply the machinery to show that the infinitesimal variation of Hodge structure of a general deformation of an ample curve in $\mP^1\times\mP^1$ is an isomorphism.
\end{abstract}
\maketitle

\bigskip

\section{Introduction}

Let $\pi\colon \caC\to B$ be a family of (smooth projective) complex curves of genus $g$ over a smooth complex manifold $B$, which we consider as a deformation of a fibre $C=\pi^{-1}\left(0\right)$ over a distinguished point $0\in B$. A tangent vector $v\in T_{B,0}$ induces a first-order infinitesimal deformation of $C$, whose Kodaira-Spencer class we denote by $\xi_v\in H^1\left(C,T_C\right)$. Recall that $H^1\left(C,T_C\right)$ is naturally identified with the tangent space of the moduli space (or stack) $\caM_g$ at $\left[C\right]$, hence we can think of elements in $H^1\left(C,T_C\right)$ as tangent directions in $\caM_g$.

The infinitesimal variation of Hodge structure (IVHS) associated with the family $\pi\colon\caC\to B$ is given by a linear map
$$\theta\colon T_{B,0}\otimes H^0\left(C,\Omega_C^1\right)\to H^1\left(C,\caO_C\right),$$
where
\begin{equation} \label{eq:general-Higgs}
\theta\left(v\otimes -\right)\colon H^0\left(C,\Omega_C^1\right)\to H^1\left(C,\caO_C\right)
\end{equation}
is given (up to non-zero scalar) by the product with the Kodaira-Spencer class $\xi_v$. The kernel of this product map consists of the holomorphic $1$-forms on $C$ that extend to the first-order infinitesimal neighbourhood of $C\subseteq\caC$ (in the direction $v$), which can thus be considered as ``infinitesimally constant'' or invariant in the variation of Hodge structure.

It is known (\cite{LP-product-curves, GT-KU}) that, for any curve $C$, the product with a general $\xi\in H^1\left(C,T_C\right)$ induces an isomorphism $H^0\left(C,\Omega_C^1\right)\to H^1\left(C,\caO_C\right)$. Hence, a general deformation of $C$ has maximal variation of Hodge structure, since the nearby fibres have no common holomorphic $1$-form. We have recently used this fact to prove that, for $g\geq 4$, the Torelli map $\caM_g\to \caA_g$ (as well as other natural modular maps) is infinitesimally rigid as a morphism of smooth DM-stacks (\cite{CGT-Rigidity}).

The above results consider general deformations of $C$, but in many cases it is useful to restrict the kind of deformations to those preserving certain geometric features of $C$. In particular, it is natural to consider curves lying in a particular surface $S$. This set-up has important applications to the study of moduli spaces of curves $\mathcal{M}_g$ in low genus, so that a general curve sits inside a given surface. For instance, the general curve of genus $3$ is a smooth quartic and therefore properties of the moduli space of genus $3$ curves can be given by studying them as plane curves (see for example \cite{GK06}). A very important example is given by curves contained in $K3$ surfaces, as a general curve of genus $g\leq 9$ or equal to $11$ is contained in a K3 surface, and many results about the moduli space $\caM_g$ are proved by using related properties (see for example \cite{Lazarsfeld:BN}, \cite{Mukai:M11}, \cite{Voisin:Green-conj-even}, \cite{Voisin:Green-conj-odd}).

The important point for our study is that, as soon as the genus grows, the general curve of genus $g$ is not expected to be sitting inside a prescribed surface. Therefore, it might very well happen that the general deformations \emph{within these restrictions} no longer induce isomorphisms in \eqref{eq:general-Higgs}. Results in this direction provide geometric information on the tangent bundle of the corresponding sublocus of the moduli space of curves $\mathcal{M}_g$. Later in the paper, we concentrate on the case of curves in $\mathbb{P}^1\times \mathbb{P}^1$,  which is related to the study of the $k-$gonal locus in $\mathcal{M}_g$. Indeed, curves admitting a $g^1_k$ and a $g^1_l$, with coprime $k,l$, can be thought of as curves in $\mathbb{P}^1\times \mathbb{P}^1$ of bidegree $(k,l)$.

In this paper, we address the case of deformations of an \emph{ample} curve $C$ inside a given fixed surface $S$. If we denote by $L=\caO_S\left(C\right)$ the associated (ample) line bundle on $S$, then we consider only (infinitesimal) \emph{embedded} deformations of $C$ in $S$ within the linear system $\left|L\right|$. If $\sigma\in H^0\left(S,L\right)$ is a section vanishing precisely on $C$, such deformations are given by
$$\caC=\left\{\sigma+\epsilon\tau=0\right\}\subseteq S\times\Spec\left(\mC\left[\epsilon\right]/\left(\epsilon^2\right)\right),$$
where $\tau\in H^0\left(S,L\right)$ is another section of $L$ and as usual $\epsilon$ denotes the parameter of the infinitesimal deformation.

The problem is then to analyze the kernel (or the rank) of the produt map $\xi_{\tau}\cdot\colon H^0\left(C,\Omega_C^1\right)\to H^1\left(C,\caO_C\right)$, where $\xi_{\tau}\in H^1\left(T_C\right)$ denotes the associated Kodaira-Spencer class.

Note that there is a natural restriction map $H^0\left(S,\Omega_S^1\right)\to H^0\left(C,\Omega_C^1\right)$, which is injective if $C$ is ample (Proposition \ref{prop:R-K-L-dim-n}). The $1$-forms on $C$ arising this way will be trivially preserved in any deformation of $C$ in $S$. Hence $\ker\left(\xi_{\tau}\cdot\right)$ will always contain a $q\left(S\right)$-dimensional subspace (where $q\left(S\right)=h^1\left(S,\caO_S\right)=h^0\left(S,\Omega_S^1\right)$ is the irregularity of the ambient surface).

This motivates the following definition:

\begin{definition} \label{df:Maximal-embedded-IVHS}
Using the above notations, we say:
\begin{enumerate}
\item $\tau\in H^0\left(S,L\right)$ has {\em maximal (embedded) infinitesimal variation}, if the cup-product map with $\xi_{\tau}\in H^1\left(C,T_C\right)$ has the maximal possible rank, i.e.
$$\rk\left(\xi_{\tau}\cdot\colon H^0\left(C,\Omega_C^1\right)\to H^1\left(C,\caO_C\right)\right)=g\left(C\right)-q\left(S\right),$$
\item the curve $C\subseteq S$ has {\em maximal (embedded) infinitesimal variation}, if there is a $\tau\in H^0\left(S,L\right)$ with maximal infinitesimal variation, and
\item the (ample) line bundle $L$ has {\em maximal infinitesimal variation} if every smooth member $C\in\left|L\right|$ has maximal infinitesimal variation.

If only a general $C\in\left|L\right|$ has maximal infinitesimal variation, we say that $L$ has \emph{generically} maximal infinitesimal variation.
\end{enumerate}
\end{definition}

\begin{remark}
The terminology ``family of maximal variation'' has been used since Viehweg \cite{Viehweg:Weak-positivity-1} with a different meaning. For families of smooth curves, and using the language of moduli spaces, Viehweg's definition means that the map from the base of the family to $\caM_g$ has image of dimension $\dim B$. This is the meaning used, for example, by Y. Dutta and D. Huybrechts in \cite{DH-maximal-variation-K3} regarding linear systems of curves in K3 surfaces. For them, a line bundle $L$ has ``maximal variation'' if the rational map $\left|L\right|\dashrightarrow\caM_g$ mapping a smooth member of $\left|L\right|$ to its isomorphism class is generically of finite degree, i.e. has image of dimension $\dim\left|L\right|$. But it could still happen that all smooth curves in $\left|L\right|$ have a non-trivial common Hodge-substructure. Thus, our notion of ``maximal variation'' is much stronger. In order to avoid confusion, we have added the word ``infinitesimal'', which should remind us that we are considering the infinitesimal variations of  Hodge structure.
\end{remark}

The question we would like to answer is whether, on any given surface $S$, \emph{sufficiently ample} line bundles $L$ have maximal infinitesimal variation. Recall from \cite{Green-hypersurfaces} that a property holds for \emph{sufficiently ample} line bundles if there is an ample line bunlde $L_0$ such that the property holds for any line bundle $L$ such that $L\otimes L_0^{\vee}$ is ample (i.e. $L$ is ``more ample'' than $L_0$). With this language, our conjecture can be stated as follows:

\begin{conjecture} \label{main-conj}
Let $S$ be a projective surface. Then any sufficiently ample line bundle $L$ has maximal infinitesimal variation.
\end{conjecture}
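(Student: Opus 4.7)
The plan is to combine the infinitesimal machinery suggested by the present paper with standard vanishing theorems for sufficiently ample line bundles, and to reduce the conjecture to a genericity statement for the section $\tau$. To begin, I would rewrite the cup-product map purely in terms of sheaves on $S$: by functoriality of the connecting homomorphism associated with $0\to T_C\to T_S|_C\to L|_C\to 0$, for every $\omega\in H^0(C,\Omega_C^1)$ one has $\xi_\tau\cdot\omega = \partial'(\tau|_C\cdot\omega)$, where $\partial'\colon H^0(C,L|_C\otimes\Omega_C^1)\to H^1(C,\caO_C)$ is the boundary map of the twisted sequence $0\to\caO_C\to T_S|_C\otimes\Omega_C^1\to L|_C\otimes\Omega_C^1\to 0$. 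Consequently, $\ker(\xi_\tau\cdot)$ equals the preimage under multiplication by $\tau|_C$ of $\mathrm{im}\bigl(H^0(C,T_S|_C\otimes\Omega_C^1)\to H^0(C,L|_C\otimes\Omega_C^1)\bigr)$, a purely cohomological description on $S$.

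Next I would observe that $H^0(S,\Omega_S^1)\hookrightarrow H^0(C,\Omega_C^1)$ always lies in $\ker(\xi_\tau\cdot)$, since each such form extends to any embedded first-order deformation of $C$ inside $S$ and is therefore Gauss--Manin constant. For $L$ sufficiently ample, Akizuki--Kodaira--Nakano together with Serre vanishing (applied Serre-dually to $T_S\otimes K_S\otimes L$) kill the relevant auxiliary groups $H^i(S,\Omega_S^j\otimes L^{-1})$, which simplifies the identification of this ``trivial'' kernel. The conjecture then amounts to showing that the induced map
\[
\bar\theta_\tau\colon H^0(C,\Omega_C^1)/H^0(S,\Omega_S^1)\to H^1(C,\caO_C)
\]
is injective for some $\tau\in H^0(S,L)$ (equivalently, for a generic one).

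For this last step I would use semicontinuity combined with a degeneration of $\tau$. The rank of $\xi_\tau\cdot$ is lower semicontinuous in $\tau$, so for each fixed smooth $C\in|L|$ it is enough to exhibit one $\tau_0$ of maximal rank. A natural candidate is a reducible section $\tau_0 = s_1 s_2$ with $s_i\in H^0(S,L_i)$ and $L_1\otimes L_2\simeq L$: this converts the injectivity of $\bar\theta_{\tau_0}$ into a question about the restriction of $H^0(C,\Omega_C^1)$ to the finite subscheme $C\cap V(\tau_0)$, which one could hope to control by induction on the factorisation of $L$. Alternatively, one could search for an abstract Koszul-type criterion modelled on Green's generic infinitesimal Torelli theorem for hypersurfaces.

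The hard part will be precisely this last step, and it must additionally be made uniform in $C\in|L|$ (not just generic $C$) to match the strong formulation of the conjecture. The injectivity of $\bar\theta_\tau$ depends on the projective-algebraic structure of $C\subset S$ --- on multiplication maps $H^0(L_1)\otimes H^0(L_2)\to H^0(L_1\otimes L_2)$, on the gonality of $C$, and on the Wahl--Gaussian maps of the embedding --- and not on cohomological dimensions alone; even with the vanishing theorems above, one may encounter hidden classes in $\ker(\bar\theta_\tau)$ arising from unexpected algebraic extensions. For $S=\mP^n$ the Jacobian ring of Griffiths and Green provides an algebraic crutch for such computations, but no analogue exists for a general surface, which is almost certainly why the authors only verify the conjecture for $S=\mP^1\times\mP^1$ in the sequel, by a direct bidegree-based argument exploiting the product structure.
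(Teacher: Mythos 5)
The statement you were given is labelled a \emph{conjecture} in the paper (Conjecture \ref{main-conj}), and the paper itself does not prove it: the authors establish it only for $\mP^2$ (citing Favale--Pirola) and for $S=\mP^1\times\mP^1$ (Theorem \ref{thm:IVHS-maximal-P1-P1}). So there is no full proof to compare yours against, and your proposal, by your own admission, is not one either. Your opening reductions are sound and essentially agree with the paper's set-up: the factorisation of $\xi_\tau\cdot$ through multiplication by $\tau_{\mid C}$ followed by a boundary map is the content of Lemma \ref{lem:KS-RL} and Proposition \ref{prop:cup-product-X-Z}; the observation that $H^0\left(S,\Omega_S^1\right)$ always lies in the kernel is Proposition \ref{prop:R-K-L-dim-n}; and lower semicontinuity of the rank in $\tau$ is exactly the mechanism of Lemma \ref{lem:alpha2}. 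But the entire mathematical content of the conjecture is concentrated in the step you defer --- exhibiting one $\tau_0$ of maximal rank, uniformly over all smooth $C\in\left|L\right|$ --- and there you offer only a candidate ($\tau_0=s_1s_2$ reducible) with no argument that it works. As written this is a research programme with an acknowledged gap, and the gap is the theorem.

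Two factual corrections about the paper's actual strategy, since you speculate on it. First, an analogue of the Jacobian ring \emph{does} exist for an arbitrary surface: Section \ref{sect:Gen-Jac} develops Green's generalized Jacobian ideals $J_M=\im\left(d\sigma\colon H^0\left(\Sigma_L\otimes M\otimes L^{\vee}\right)\to H^0\left(M\right)\right)$ and rings $R_M$, and the identification of the IVHS with the map $\varphi_{K_X\otimes L}\left(\tau\right)\colon R_{K_X\otimes L}\to R_{K_X\otimes L}^*$ (Corollary \ref{cor:cup-product-Jacobian-rings}) is precisely the ``algebraic crutch'' you say is unavailable. Second, the $\mP^1\times\mP^1$ case is not handled by degenerating $\tau$ to a product of sections. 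Instead the authors show (Lemma \ref{lem:alpha2}) that any $\alpha$ in the kernel of a maximal-rank $\varphi_{K_X\otimes L}\left(\tau\right)$ satisfies $\lambda_{K_X^2\otimes L^2}\left(\alpha^2\right)=0$, convert non-maximality into a lower bound on the dimension of the locus $D$ of decomposable elements of $\mP\left(\ker\lambda\right)$ (Theorem \ref{thm:lower-bound-dim-D}), and contradict it with an upper bound obtained from the birationality of the map induced by $J_{d,e\mid Z}$ together with a uniform-position monodromy argument (Theorems \ref{thm:J-d-e} and \ref{thm:upper-bound-dim-D-P1xP1}). If you want to push your programme further, note the paper's own caveat (Remark \ref{rmk:approach-quadrics}) that the $\alpha^2$ strategy can only work when $\dim R_{K_X\otimes L}\leq\dim R_L$, which essentially restricts it to surfaces with $p_g\leq 1$; your reducible-section degeneration is not obviously subject to that constraint and could in principle be a genuinely different route, but at present it proves nothing.
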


To our best knowledge, this problem has only been recently addressed in \cite{FP-plane-curves}, where they prove the conjecture for plane curves of degree $d\geq 3$, i.e. for all $L=\caO_{\mP^2}\left(d\right)$ with $d\geq 3$ (thus $L_0=\caO_{\mP^2}\left(2\right)$).

The main tool in Favale-Pirola's proof is Macaulay's theorem on the structure of the Jacobian ring of the plane curve. In this paper, we exploit and further develop Green's generalization of Jacobian ideals and rings for smooth hypersurfaces (\cite{Green-hypersurfaces}) to address the problem for arbitrary projective surfaces, on which some of the nice properties of Jacobian rings of plane curves no longer hold. We can then apply our machinery to prove Conjecture \ref{main-conj} for $S=\mP^1\times\mP^1$.

\begin{theorem}[Theorem \ref{thm:IVHS-maximal-P1-P1}] \label{main-thm}
For any $d,e\geq 1$, $L=\caO_X\left(d,e\right)$ has maximal infinitesimal variation at every smooth $Z\in\left|L\right|$. In other words, a general embedded deformation of any smooth ample curve $Z\subseteq\mP^1\times\mP^1$ has maximal infinitesimal variation.
\end{theorem}

It is natural to ask, up to which extent our proof can be extended to more general surfaces. On the one hand, an important step in our proof relies on the second statement of Theorem \ref{thm:J-d-e}, which can be understood as a generalization of the fact that the classical polar map of plane curves is birational onto the image. An equivalent statement for a curve $C$ in a surface $S$ would be that the linear subsystem of $\left|\caO_C\left(C\right)\right|$ given by the image of the natural restriction map
$$H^0\left(S,T_S\right)\to H^0\left(C,N_{C/S}\right)=H^0\left(C,\caO_C\left(C\right)\right)$$
has no base points and induces a birational map $C\to\mP^r$ (Remark \ref{rmk:thm-upper-bound-more-general}). This could hold if $T_S$ is positive enough, and it would be interesting to characterize for which sufaces it holds.

On the other hand, the whole strategy of the proof relies on a special property of the $1$-forms $\alpha\in H^0\left(C,\Omega_C^1\right)$ not arising as restrictions from $H^0\left(S,\Omega_S^1\right)$ but lying on the kernel of a general $\xi_{\tau}$ (see Lemma \ref{lem:alpha2}), and showing that $\alpha=0$ is the only possibility (under certain appropriate hypothesis). However, this cannot be the case if $K_S$ is positive enough (see Remark \ref{rmk:approach-quadrics}), and somehow limits the applicability of our strategy to surfaces with $p_g\left(S\right)\leq 1$, such as rational, $K3$ or abelian surfaces.

In any case, some explicit computations for curves in surfaces $S\subseteq\mP^3$ of general type (degrees $5$ and $6$) make us believe that Conjecture \ref{main-conj} actually holds for every surface.

The paper is structured as follows. In Section \ref{sect:Gen-Jac} we develop the general technical framework to study variations of Hodge structures of smooth hypersurfaces. More precisely, we expand the theory of generalized Jacobian ideals and rings introduced by Green in \cite{Green-hypersurfaces}. In Section \ref{sect:curves-surfaces} we focus on the case of curves on surfaces, proving the crucial Lemma \ref{lem:alpha2} and Theorem \ref{thm:lower-bound-dim-D}. The latter gives a numerical condition satisfied when the curve does not have maximal infinitesimal variation. Finally, in Section \ref{sect:P1-P1} we study in detail the case of curves in $\mP^1\times\mP^1$. We first prove that the generalized Jacobian ideals indeed coincide with the naive definition that one could make using the partial derivatives of a bihomogeneous equation. Secondly, we study the maps induced by the generalized Jacobian ideals in low degrees (analogous to the polar map of a curve in $\mP^2$). We finally prove Theorem \ref{thm:upper-bound-dim-D-P1xP1}, which contradicts the numerical condition obtained in Theorem \ref{thm:lower-bound-dim-D}, and finishes the proof of Theorem \ref{main-thm}.

\textbf{Acknowledgements:} We are particularly grateful to Gian Pietro Pirola, for some very insightful conversations we shared during his visit at Leibniz Universität Hannover. We also thank the referees for their useful comments on the previous version of the paper.

\section{Generalized Jacobian ideals and rings}

\label{sect:Gen-Jac}

The main technical tool to prove Theorem \ref{main-thm} will be generalized Jacobian ideals and rings, which were introduced by Green in \cite{Green-hypersurfaces}, in order to prove the infinitesimal Torelli theorem for sufficiently ample smooth hypersurfaces, as well as to study the local structure of the associated period maps. We will actually need some more precise results than those developed by Green, since his main assumption is that the line bundle $L$ is sufficiently ample, which is mostly used to ensure the vanishing of enough cohomology groups. But we will actually need a better control of which cohomology groups need to vanish for the statements to hold, and also to understand up to what extent these vanishings might fail. For example, Lemma \ref{lem:top-isomorphism} is the first assertion in \cite[Theorem 2.15]{Green-hypersurfaces}, while Propositions \ref{prop:duality-K+L} and \ref{prop:duality-L} explain why and by how much the second statement of \cite[Theorem 2.15]{Green-hypersurfaces} fails in some crucial cases.

For the sake of completeness, we will recall the construction of generalized Jacobian ideals. Since the constructions and first results make sense for (smooth) hypersurfaces on complex manifolds of arbitrary dimension, we start with this more general setting. Only at the end of the section we will restrict ourselves to the case of curves moving on surfaces.

\subsection{The bundle $\Sigma_L$}

Let $X$ be a smooth compact complex variety of dimension $n$ and $L$ a line bundle on $X$. In this section, we summarize the properties of $\Sigma_L$, the bundle of first-order differential operators on sections of $L$, which is necessary to define the generalized Jacobian ideals and rings. For a more detailed explanation of $\Sigma_L$ with proofs, we refer to \cite{Sernesi-Book}. Note that many of the constructions and first properties are of local nature, hence are also valid for non-compact varieties. However the statements regarding global sections or higher cohomology groups are only meaningful in the compact case.

Suppose $\left(x_1,\ldots,x_n\right)$ are coordinates on an open subset $U\subseteq X$, and that $L_{\mid U}$ is trivialized by a nowhere-vanishing section $e\in H^0\left(U,L\right)$, so that any other section $\sigma\in H^0\left(U,L\right)$ has the form $\sigma=fe$ for a holomorphic function $f\in\caO_X\left(U\right)$.

A local section $D$ of $\Sigma_L$ is then of the form $D=a_0\cdot 1+\sum_{i=1}^n a_i D_i$, where $a_0,\ldots,a_n\in\caO_X\left(U\right)$, and $D$ acts on $\sigma=fe$ by differentiating the function $f$, i.e.
\begin{equation} \label{eq:action-Sigma_L}
    D\left(\sigma\right)=D\left(fe\right):=\left(a_0\cdot f+\sum_{i=1}^na_i\frac{\partial f}{\partial x_i}\right)e.
\end{equation}
Imposing that this formula is compatible with the transition functions for $L$ on the overlap of two such open subsets gives the transition functions for $\Sigma_L$.

The bundle $\Sigma_L$ fits into a natural short exact sequence
\begin{equation} \label{eq:extension-Sigma_L}
    0\longra \caO_X\longra\Sigma_L\longra T_X\longra 0,
\end{equation}
where the inclusion $\caO_X\hookrightarrow\Sigma_L$ is locally given by $a\mapsto a\cdot 1$ (i.e. by considering holomorphic functions as differential operators of order $0$ via multiplication), and the projection $\Sigma_L\to T_X$ is locally given by $D_i\mapsto\frac{\partial}{\partial x_i}$.

\begin{remark} \label{rmk:extension-class-Sigma_L}
The extension class of \eqref{eq:extension-Sigma_L} coincides (up to non-zero scalar multiple) with the first Chern class of $L$
\begin{equation} \label{eq:extension-class-Sigma_L}
    c_1\left(L\right)\in H^1\left(X,\Omega_X^1\right)\cong\Ext_{\caO_X}^1\left(\caO_X,\Omega_X^1\right)\cong \Ext_{\caO_X}^1\left(T_X,\caO_X\right).
\end{equation}
\end{remark}

\subsection{The differential $d\sigma$ of a section $\sigma\in H^0\left(L\right)$ and associated bundles}

\begin{definition} \label{df:differential-section}
Let $\sigma\in H^0\left(X,L\right)$ be a global section. The \emph{differential of $\sigma$} is the morphism of vector bundles
\begin{equation} \label{eq:df:differential-section}
d\sigma\colon\Sigma_L\to L,\quad D\mapsto d\sigma\left(D\right):=D\left(\sigma\right),
\end{equation}
mapping a differential operator $D\in H^0\left(U,\Sigma_L\right)$, defined on an open subset $U\subseteq X$, to the derivative of the restriction $\sigma_{\mid U}$ along $D$.

The kernel of $d\sigma$ is denoted by
\begin{equation} \label{eq:E_sigma}
    \caE_{\sigma}=\ker d\sigma\subseteq\Sigma_L,
\end{equation}
\end{definition}

The following result summarizes the first properties of $d\sigma$ and $\caE_{\sigma}$.

\begin{proposition} \label{prop:differential-section-kernel}
Let $\sigma\in H^0\left(X,L\right)$ be a section of $L$, and denote by $Z=\left\{\sigma=0\right\}\subseteq X$ the vanishing subvariety of $\sigma$. Then:
\begin{enumerate}
    \item The cokernel of $d\sigma\colon\Sigma_L\to L$ is supported on the singular locus of $Z$. In particular, $d\sigma$ is surjective if $Z$ is smooth.
    \item If $Z$ is smooth, then there is a commutative diagram with exact rows and columns
    \begin{equation} \label{diag:generalized-differential-complete}
        \xymatrix{ & & 0 \ar[d] & 0 \ar[d] & \\
         & & \caE_{\sigma} \ar[d] \ar[r]^{\cong} & T_X\left(-\log Z\right) \ar[d] & \\
        0 \ar[r] & \caO_X \ar@{=}[d] \ar[r] & \Sigma_L \ar[r] \ar[d]^{d\sigma} & T_X \ar[r] \ar[d]^{\widetilde{d\sigma}} & 0  \\
        0 \ar[r] & \caO_X  \ar[r]^{\sigma} & L \ar[r] \ar[d] & L_{\mid Z} \ar[r] \ar[d] & 0 \\
         & & 0 & 0}
    \end{equation}
    where $\widetilde{d\sigma}\colon T_X\to L_{\mid Z}$ coincides with the composition $T_X\twoheadrightarrow T_{X\mid Z}\twoheadrightarrow N_{Z/X}\cong L_{\mid Z}$.
    \item If $Z$ is smooth, then
    \begin{equation} \label{eq:det-Esigma}
        \det\caE_{\sigma}\cong K_X^{\vee}\otimes L^{\vee}.
    \end{equation}
\end{enumerate}
\end{proposition}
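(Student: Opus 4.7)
The plan is to prove the three statements in turn, using local coordinate computations together with the two short exact sequences already available for $\Sigma_L$.

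For (1), I would work on a chart $U$ with coordinates $(x_1,\ldots,x_n)$ on which $L$ is trivialized by $e$, writing $\sigma=fe$ and a local section of $\Sigma_L$ as $D=a_0\cdot 1+\sum_i a_iD_i$. The formula \eqref{eq:action-Sigma_L} gives $d\sigma(D)=\bigl(a_0f+\sum_i a_i\partial f/\partial x_i\bigr)e$, so the image of $d\sigma$ is locally the $\caO_X$-submodule of $L$ generated by $f$ and its partial derivatives $\partial f/\partial x_i$. The cokernel is therefore supported precisely on the common zero locus of $f$ and all $\partial f/\partial x_i$, i.e.\ on the singular locus of $Z$; in particular $d\sigma$ is surjective when $Z$ is smooth.

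For (2), assume $Z$ smooth so that by (1) both horizontal rows of \eqref{diag:generalized-differential-complete} are short exact. The leftmost square commutes because $a\in\caO_X$ is sent along either path to $a\sigma\in L$. The induced map on the right, $\widetilde{d\sigma}\colon T_X\to L_{\mid Z}$, sends $\partial/\partial x_i$ to $\partial f/\partial x_i$ modulo $(f)$, which in local coordinates is exactly the composition $T_X\twoheadrightarrow T_{X\mid Z}\twoheadrightarrow N_{Z/X}\cong L_{\mid Z}$, since the normal bundle is locally generated by the class of $f$ with transitions given by the partials. Applying the snake lemma (or equivalently, the observation that in a morphism of short exact sequences whose leftmost map is the identity, the kernels of the other two maps are canonically isomorphic) yields $\caE_\sigma\cong\ker\widetilde{d\sigma}$. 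Finally, a local vector field $v=\sum a_i\partial/\partial x_i$ lies in $\ker\widetilde{d\sigma}$ iff $v(f)\in(f)$, which is precisely the local description of $T_X(-\log Z)$.

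For (3), rather than computing determinants of a torsion sheaf directly, I would combine the two locally free short exact sequences already produced. The middle column of \eqref{diag:generalized-differential-complete} gives $0\to\caE_\sigma\to\Sigma_L\to L\to 0$, whence $\det\caE_\sigma\cong\det\Sigma_L\otimes L^{\vee}$. Taking determinants of \eqref{eq:extension-Sigma_L} yields $\det\Sigma_L\cong\det T_X=K_X^{\vee}$, and combining these gives $\det\caE_\sigma\cong K_X^{\vee}\otimes L^{\vee}$. The entire argument is largely formal once the local computation in (1) is set up; the only mildly delicate step is identifying $\ker\widetilde{d\sigma}$ with the standard $T_X(-\log Z)$, but this reduces to the same local calculation.
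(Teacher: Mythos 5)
Your proposal is correct and follows essentially the same route as the paper: the local computation of $d\sigma$ on a trivialization for (1), the morphism of short exact sequences plus the snake lemma and the identification $\ker\widetilde{d\sigma}=T_X\left(-\log Z\right)$ for (2), and taking determinants of the central row and column for (3). The only cosmetic difference is that you spell out the local coordinate verification of $\widetilde{d\sigma}=\left(T_X\twoheadrightarrow T_{X\mid Z}\twoheadrightarrow N_{Z/X}\cong L_{\mid Z}\right)$ slightly more explicitly than the paper does.
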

\begin{proof}
\begin{enumerate}
    \item Follows directly from the local description \eqref{eq:action-Sigma_L} of $d\sigma\left(D\right)=D\left(\sigma\right)$ on a trivialization of $L$, since $Z$ is locally defined by $\left\{f=0\right\}$.

    \item Note first that $d\sigma$ acts on the subsheaf $\caO_X\subseteq\Sigma_L$ by multiplying by $\sigma$. There is thus a commutative diagram
    \begin{equation} \label{diag:generalized-differential}
        \xymatrix{0 \ar[r] & \caO_X \ar@{=}[d] \ar[r] & \Sigma_L \ar[r] \ar[d]^{d\sigma} & T_X \ar[r] \ar@{-->}[d]^{\widetilde{d\sigma}} & 0  \\
        0 \ar[r] & \caO_X  \ar[r]^{\sigma} & L \ar[r] & L_{\mid Z} \ar[r] & 0}
    \end{equation}
    where $\widetilde{d\sigma}\colon T_X\to L_{\mid Z}$ is locally given by differentiating the local equation of $\sigma$ along a local vector field $\chi$ \emph{and restricting it} to $Z$, which turns out to be well-defined, independent of the choice of a lifting of $\chi$ to $\Sigma_L$. Writing the isomorphism $L_{\mid Z}\cong N_{Z/X}$ with the normal bundle in local coordinates gives the alternative description of $\tilde{d\sigma}$ as the composition $T_X\twoheadrightarrow T_{X\mid Z}\twoheadrightarrow N_{Z/X}\cong L_{\mid Z}$.

    With this description, it is clear that a vector field lies in $\ker\widetilde{d\sigma}$ if and only if it is tangent to $Z$ along $Z$. That is, $\ker\widetilde{d\sigma}=T_X\left(-\log Z\right)\subseteq T_X$, the dual of the vector bundle $\Omega_X^1\left(\log Z\right)$ of meromorphic $1$-forms with at most a logarithmic pole along $Z$.

    The diagram \eqref{diag:generalized-differential-complete} is obtained by completing \eqref{diag:generalized-differential} using the snake lemma and the surjectivity of $d\sigma$.

    \item Since both the central row and the central column are short exact sequences of locally free sheaves, we obtain
    \begin{equation} \label{eq:proof:det-Esigma}
        \det\caE_{\sigma}\cong\left(\det\Sigma_L\right)\otimes L^{\vee}\cong\left(\det T_X\right)\otimes L^{\vee}\cong K_X^{\vee}\otimes L^{\vee}.
    \end{equation}
\end{enumerate}
\end{proof}

Using $d\sigma$ we define the following cohomology class $\delta\sigma\in H^1\left(X,\caE_{\sigma}\otimes L^{\vee}\right)$, which will be very useful for certain formalizations.

\begin{definition} \label{df:delta-sigma}
Let $\sigma\in H^0\left(X,L\right)$ with smooth vanishing locus. We denote by
\begin{equation} \label{eq:delta-sigma}
    \delta\sigma:=\left[0\to\caE_{\sigma}\to\Sigma_L\stackrel{d\sigma}{\longra}L\to 0\right]
    \in\Ext_{\caO_X}^1\left(L,\caE_{\sigma}\right)=H^1\left(X,\caE_{\sigma}\otimes L^{\vee}\right),
\end{equation}
the extension class of the central column of \eqref{diag:generalized-differential}.
\end{definition}

Note that $\delta\sigma$ coincides with the image of $1\in H^0\left(X,\caO_X\right)$, under the connecting homomorphism of the same extension tensored by $L^{\vee}$:
\begin{equation} \label{eq:extension-dsigma}
    0 \longra \caE_{\sigma}\otimes L^{\vee}\longra \Sigma_L\otimes L^{\vee}\stackrel{d\sigma}{\longra} \caO_X \longra 0.
\end{equation}
The Koszul komplex of $d\sigma\colon\Sigma_L\otimes L^{\vee}\to\caO_X$ has the form
\begin{equation} \label{eq:Koszul-dsigma}
    0\longra\bigwedge^{n+1}\Sigma_L\otimes L^{-n-1}\longra\bigwedge^n\Sigma_L\otimes L^{-n}\longra\cdots \longra\bigwedge^2\Sigma_L\otimes L^{-2}\longra\Sigma_L\otimes L^{\vee}\longra\caO_X\longra 0
\end{equation}
Using the isomorphism $\det\Sigma_L\cong\det\caE_{\sigma}\otimes L$ at the first term, the Koszul complex can be split into short exact sequences
\begin{equation} \label{eq:ses-Koszul-dsigma}
    0\longra\bigwedge^r\caE_{\sigma}\otimes L^{-r}\longra\bigwedge^r\Sigma_L\otimes L^{-r}\longra\bigwedge^{r-1}\caE_{\sigma}\otimes L^{-r+1}\longra 0,
\end{equation}
for $r=1,\ldots,n$, whose extension classes all coincide with $\delta\sigma$, under the canonical identification $\Ext_{\caO_X}^1\left(\bigwedge^{r-1}\caE_{\sigma}\otimes L^{-r+1},\bigwedge^r\caE_{\sigma}\otimes L^{-r}\right)\cong\Ext_{\caO_X}^1\left(L,\caE_{\sigma}\right).$

\subsection{Generalized Jacobian rings}

Let now $M$ be another line bundle on $X$. We still denote with $d\sigma$ the map $\Sigma_L\otimes L^{\vee}\otimes M\to M$ induced by tensoring $d\sigma$ with the identity on $M$. Also, in order to lighten the notation, we will remove the variety $X$ from the notation of cohomology groups from now on, simply writing $H^i\left(\caF\right)$ instead $H^i\left(X,\caF\right)$ for any sheaf $\caF$ on $X$.

\begin{definition} \label{df:Jacobian-idea-ring}
Due to Green \cite{Green-hypersurfaces}, we define
\begin{enumerate}
\item the \emph{generalized Jacobian ideal} (with respect to $\sigma$)
\begin{equation} \label{eq:generalized-Jacobian}
J_M:=\im\left(d\sigma\colon H^0\left(\Sigma_L\otimes M\otimes L^{\vee}\right)\longra H^0\left(M\right)\right)\subseteq H^0\left(M\right)
\end{equation}
\item and the \emph{generalized Jacobian ring} $R_M:= H^0\left(M\right)/J_M$
\end{enumerate}
\emph{on degree $M$}.
\end{definition}

Note that $H^0\left(M\right)$ is not a ring, so neither $J_M\subseteq H^0\left(M\right)$ is an ideal nor $R_M$ is a ring. All these spaces are merely vector spaces. However, for any $M,N\in\Pic\left(X\right)$, the natural multiplication maps $H^0\left(M\right)\otimes H^0\left(N\right)\to H^0\left(M\otimes N\right)$ map the subspaces $J_M\otimes H^0\left(N\right)$ and $H^0\left(M\right)\otimes J_N$ into $J_{M\otimes N}$, and this also allows us to define a multiplication map $R_M\otimes R_N\to R_{M\otimes N}$. Loosely speaking, although the following construction only works if $h^1\left(\caO_X\right)=0$, we can think of the direct sum $\bigoplus_M H^0\left(M\right)$ (over all isomorphism classes of line bundles $M$) as a commutative ring graded by $\Pic\left(X\right)$, and then the subspace $\bigoplus_M J_M$ forms a graded ideal with quotient ring $\bigoplus_M R_M$. The generalized Jacobian ideal $J_M$ and ring $R_M$ are just the summands of degree $M$.

\begin{example}
For $S=\mP^2$ (with homogeneous coordinates $\left[x_0\colon x_1\colon x_2\right]$) and $L=\caO_{\mP^2}\left(n\right)$ with $n\neq 0$, we have $\Sigma_L\cong\caO_{\mP^2}\left(1\right)^{\oplus 3}$, and the sequence \eqref{eq:extension-Sigma_L} is the Euler sequence, with the first map appropriately rescaled.
$$\xymatrix@R-2pc{
0 \ar[r] & \caO_{\mP^2} \ar[r] & \caO_{\mP^2}\left(1\right)^{\oplus 3} \ar[r] & T_{\mP^2} \ar[r] & 0 \\
 & 1 \ar@{|->}[r] & \frac{1}{n}\left(x_0,x_1,x_2\right) & &
}$$
For any $F\in H^0\left(\mP^2,L\right)=\mC\left[x_0,x_1,x_2\right]_n$, the differential $dF\colon\caO_{\mP^2}\left(1\right)^{\oplus 3}\to \caO_{\mP^2}\left(n\right)$ is given by the partial derivatives $\left(\frac{\partial F}{\partial x_0},\frac{\partial F}{\partial x_1},\frac{\partial F}{\partial x_2}\right)$. The rescaling by $\frac{1}{n}$ ensures that $dF$, restricted to $\caO_{\mP^2}$, coincides with the $F$, since the Euler relation gives
$$dF\left(\frac{1}{n}\left(x_0,x_1,x_2\right)\right)=\frac{1}{n}\left(x_0\frac{\partial F}{\partial x_0}+x_1\frac{\partial F}{\partial x_1}+x_2\frac{\partial F}{\partial x_2}\right)=F.$$

The generalized Jacobian ideal in ``degree'' $\caO_{\mP^2}\left(m\right)$ is the image of
$$H^0\left(\caO_{\mP^2}\left(m-n+1\right)\right)^{\oplus 3}\stackrel{dF}{\longra} H^0\left(\caO_{\mP^2}\left(m\right)\right),$$
i.e. it is generated by the partial derivatives of $F$ multiplied by arbitrary polynomials of the correct degree. This means, $J_{\caO_{\mP^2}\left(m\right)}=\left(\frac{\partial F}{\partial x_0},\frac{\partial F}{\partial x_1},\frac{\partial F}{\partial x_2}\right)_m$ is precisely the piece of degree $m$ of the usual Jacobian ideal $J^F\subseteq\mC\left[x_0,x_1,x_2\right]$, which motivates the name.
\end{example}

Since for any $M$ and any $r=1,\ldots,n$ the extension class of
\begin{equation} \label{eq:split-Koszul}
    0\longra\bigwedge^r\caE_{\sigma}\otimes L^{-r}\otimes M\longra\bigwedge^r\Sigma_L\otimes L^{-r} \stackrel{d\sigma}{\longra} \bigwedge^{r-1}\caE_{\sigma}\otimes L^{-r+1}\otimes M\longra 0
\end{equation}
is still $\delta\sigma\in H^1\left(\caE_{\sigma}\otimes L^{\vee}\right)$, the coboundary morphisms \begin{equation} \label{eq:conn-homom-delta-sigma}
    H^{r-1}\left(\bigwedge^{r-1}\caE_{\sigma}\otimes L^{-r+1}\otimes M\right)\longra H^r\left(\bigwedge^r\caE_{\sigma}\otimes L^{-r}\otimes M\right)
\end{equation}
are given by cup-product with $\delta\sigma$. In particular, we have
\begin{equation} \label{eq:J_M-ker}
J_M=\im\left(d\sigma\colon H^0\left(\Sigma_L\otimes M\otimes L^{\vee}\right)\to H^0\left(M\right)\right)=\ker\left(\delta\sigma\colon H^0\left(M\right)\to H^1\left(\caE_{\sigma}\otimes L^{\vee}\otimes M\right)\right),
\end{equation}
and thus, cup-product with $\delta\sigma$ induces an inclusion
\begin{multline} \label{eq:R_M-in-H1}
    R_M=\coker\left(d\sigma\colon H^0\left(\Sigma_L\otimes M\otimes L^{\vee}\right)\to H^0\left(M\right)\right)\stackrel{\delta\sigma}{\cong}\\
    \cong\ker\left(H^1\left(\caE_{\sigma}\otimes L^{\vee}\otimes M\right)\to H^1\left(\Sigma_L\otimes L^{\vee}\otimes M\right)\right)\subseteq H^1\left(\caE_{\sigma}\otimes L^{\vee}\otimes M\right).
\end{multline}
Note that the last inclusion can be an equality, for example, if $H^1\left(\Sigma_L\otimes L^{\vee}\otimes M\right)=0$.

\subsection{Jacobian in degree $K_X\otimes L$}

We now focus on the generalized Jacobian ideal and ring on degree $K_X\otimes L$, which turns out to be the piece on which the cup-product with the Kodaira-Spencer class of a general embedded deformation should become an isomorphism.

\begin{proposition} \label{prop:R-K-L-dim-n}
Suppose $L$ is ample and $\sigma\in H^0\left(L\right)$ is a global section, with smooth vanishing divisor $Z:=\left\{\sigma=0\right\}$. Then, the following hold:
\begin{enumerate}
\item $J_{K_X\otimes L}=H^0\left(K_X\right)\cdot\sigma=\im\left(H^0\left(K_X\right)\stackrel{\cdot\sigma}{\to}H^0\left(K_X\otimes L\right)\right)$. \label{item:lem:R-K-L-dim-n:1}
\item The adjunction formula induces a natural inclusion $R_{K_X\otimes L}\hookrightarrow H^0\left(K_Z\right)$. \label{item:lem:R-K-L-dim-n:2}
\item The natural restriction morphism
\begin{equation}
H^0\left(\Omega_X^{n-1}\right)\longra H^0\left(\Omega_{X\mid Z}^{n-1}\right)\longra H^0\left(K_Z\right),
\end{equation}
is injective, hence we can identify $H^0\left(\Omega_X^{n-1}\right)$ with its image in $H^0\left(K_Z\right)$. \label{item:lem:R-K-L-dim-n:3}
\item Moreover, it holds $H^0\left(K_Z\right)=H^0\left(\Omega_X^{n-1}\right)\oplus R_{K_X\otimes L}$, and in particular, $\dim R_{K_X\otimes L}=p_g\left(Z\right)-h^0\left(\Omega_X^{n-1}\right)$. \label{item:lem:R-K-L-dim-n:4}
\end{enumerate}
\end{proposition}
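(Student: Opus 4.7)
My plan is to prove the four parts in the order (3), (1), (2), (4), since each later statement builds on the previous ones. The main analytic inputs are Kodaira-Nakano vanishing and hard Lefschetz, both available because $L$ is ample.

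For (3), I would factor the restriction as $H^0\left(\Omega_X^{n-1}\right)\to H^0\left(\Omega_X^{n-1}|_Z\right)\to H^0\left(K_Z\right)$. The first map is injective because its kernel, $H^0\left(\Omega_X^{n-1}\otimes L^{\vee}\right)$, vanishes by Nakano vanishing (dualize $H^n\left(\Omega_X^1\otimes L\right)=0$). The kernel of the second map, coming from the conormal filtration of $\Omega_X^{n-1}|_Z$, is $\Omega_Z^{n-2}\otimes L^{\vee}|_Z$, whose global sections vanish by Kodaira-Nakano on $Z$ applied to the ample bundle $L|_Z$. For $n=2$ this reduces to the elementary statement that $L^{\vee}|_Z$ has negative degree on the curve $Z$.

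For (1), the inclusion $\sigma\cdot H^0\left(K_X\right)\subseteq J_{K_X\otimes L}$ is immediate from the embedding $\caO_X\hookrightarrow\Sigma_L$ tensored by $K_X$: these ``order-zero'' differential operators act on $\sigma$ by multiplication. For the reverse inclusion, tensor \eqref{eq:extension-Sigma_L} by $K_X$ and use the canonical identification $T_X\otimes K_X\cong\Omega_X^{n-1}$ to obtain
$$0\longra K_X\longra\Sigma_L\otimes K_X\longra\Omega_X^{n-1}\longra 0.$$
By Remark \ref{rmk:extension-class-Sigma_L}, the associated connecting homomorphism $H^0\left(\Omega_X^{n-1}\right)\to H^1\left(K_X\right)$ is cup product with $c_1\left(L\right)$, and hard Lefschetz (for the ample class $L$) makes it injective as a map on the Hodge components $H^{n-1,0}\left(X\right)\to H^{n,1}\left(X\right)$. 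Thus $H^0\left(\Sigma_L\otimes K_X\right)=H^0\left(K_X\right)$, and applying $d\sigma$ yields $J_{K_X\otimes L}=\sigma\cdot H^0\left(K_X\right)$. Part (2) is then an immediate corollary: the long exact sequence of the adjunction SES $0\to K_X\stackrel{\sigma}{\to}K_X\otimes L\to K_Z\to 0$ identifies the quotient $H^0\left(K_X\otimes L\right)/\sigma\cdot H^0\left(K_X\right)=R_{K_X\otimes L}$ with a subspace of $H^0\left(K_Z\right)$.

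For (4), I would compare the two short exact sequences via the commutative diagram
$$\xymatrix{0 \ar[r] & K_X \ar[r] \ar@{=}[d] & \Sigma_L\otimes K_X \ar[r] \ar[d]^{d\sigma} & \Omega_X^{n-1} \ar[r] \ar[d]^{\widetilde{d\sigma}} & 0 \\ 0 \ar[r] & K_X \ar[r]^{\sigma} & K_X\otimes L \ar[r] & K_Z \ar[r] & 0}$$
Naturality of the connecting homomorphism then gives $\partial\circ\widetilde{d\sigma}=\pm c_1\left(L\right)\cup-$ on $H^0\left(\Omega_X^{n-1}\right)$, where $\partial$ is the adjunction connecting map and, under the factoring in (3), $\widetilde{d\sigma}$ coincides on global sections with the inclusion $H^0\left(\Omega_X^{n-1}\right)\hookrightarrow H^0\left(K_Z\right)$. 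Hard Lefschetz again makes the composition injective, so $H^0\left(\Omega_X^{n-1}\right)\cap\ker\partial=H^0\left(\Omega_X^{n-1}\right)\cap R_{K_X\otimes L}=0$. The dimension count then closes the argument: Kodaira vanishing $H^1\left(K_X\otimes L\right)=0$ makes $\partial$ surjective, and Serre duality combined with Hodge symmetry yields $\dim H^1\left(K_X\right)=h^{n-1,0}\left(X\right)$, so $\dim R_{K_X\otimes L}+\dim H^0\left(\Omega_X^{n-1}\right)=\dim H^0\left(K_Z\right)$ and the direct sum decomposition follows. The main subtlety will be verifying the identification $\partial\circ\widetilde{d\sigma}=\pm c_1\left(L\right)\cup-$ via naturality in the displayed square (a Čech calculation using a local trivialization $\sigma=f\cdot e$ of $L$), but since signs do not affect injectivity, everything after that is standard positivity.
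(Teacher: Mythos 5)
Your proposal is correct and follows essentially the same route as the paper: the same twist of the fundamental diagram by $K_X$, the same use of hard Lefschetz to identify $H^0\left(\Sigma_L\otimes K_X\right)$ with $H^0\left(K_X\right)$ via the connecting map $c_1\left(L\right)\cup -$, and the same Akizuki--Nakano vanishings for the injectivity in (3) (your kernel $\Omega_Z^{n-2}\otimes L^{\vee}_{\mid Z}$ from the conormal filtration is exactly the paper's $T_Z\otimes K_{X\mid Z}$). The only cosmetic differences are the order of the parts and that in (4) you argue by a dimension count using surjectivity of the adjunction connecting map plus Hodge symmetry, where the paper reads the direct sum decomposition off directly from the fact that the composite $H^0\left(\Omega_X^{n-1}\right)\to H^0\left(K_Z\right)\to H^1\left(K_X\right)$ is an isomorphism.
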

\begin{proof}
Twisting the diagram \eqref{diag:generalized-differential} with $K_X$ and using the adjunction formula for $Z\subseteq X$ we obtain
\begin{equation} \label{diag:proof:lem-R-K-L-0}
    \xymatrix{0 \ar[r] & K_X \ar[r] \ar@{=}[d] & \Sigma_L\otimes K_X \ar[r] \ar[d]^{d\sigma} & \Omega_X^{n-1} \ar[r] \ar[d] & 0  \\
    0 \ar[r] & K_X \ar[r]^{\sigma} & K_X\otimes L \ar[r] & \left(K_X\otimes L\right)_{\mid Z}\cong K_Z \ar[r] & 0}
\end{equation}
where the right vertical arrow is the restriction of $\left(n-1\right)$-forms from $X$ to $Z$.

\begin{enumerate}
\item The connecting homomorphism $H^0\left(\Omega_X^{n-1}\right)\to H^1\left(K_X\right)$ of the long exact sequence in cohomology of the upper row is given by cup product with the corresponding extension class, which is $c_1\left(L\right)\in H^1\left(\Omega_X^1\right)$. Since $L$ is ample, Lefschetz theorem implies that the connecting homomorphism is an isomorphism. Thus there is an equality $H^0\left(K_X\right)=H^0\left(\Sigma_L\otimes K_X\right)$ and
\begin{equation} \label{eq:proof:lem-R-K-L-1}
J_{K_X\otimes L}=\im\left(d\sigma\colon H^0\left(\Sigma_L\otimes K_X\right)\longra H^0\left(K_X\otimes L\right)\right)=\im\left(\cdot\sigma\colon H^0\left(K_X\right)\longra H^0\left(K_X\otimes L\right)\right).
\end{equation}

\item Assuming (\ref{item:lem:R-K-L-dim-n:1}) we have
\begin{multline} \label{eq:proof:lem-R-K-L-2}
R_{K_X\otimes L}=H^0\left(K_X\otimes L\right)/J_{K_X\otimes L}=\coker\left(H^0\left(K_X\right)\to H^0\left(K_X\otimes L\right)\right)=\\
=\ker\left(H^0\left(K_Z\right)\longra H^1\left(K_X\right)\right)\subseteq H^0\left(K_Z\right).
\end{multline}

\item The first morphism $H^0\left(\Omega_X^{n-1}\right)\to H^0 \left(\Omega_{X\mid Z}^{n-1}\right)$ is injective, since its kernel is
\begin{equation} \label{eq:proof:lem-R-K-L-3-1}
H^0\left(\Omega_X^{n-1}\left(-Z\right)\right)\cong H^n\left(\Omega_X^1\left(Z\right)\right)^*=0,
\end{equation}
where the first isomorphism follows from Serre duality, and the second equality follows from Kodaira-Nakano vanishing theorem.

The second morphism $H^0\left(\Omega_{X\mid Z}^{n-1}\right)\ra H^0\left(K_Z\right)$ is also injective, since its kernel is
\begin{equation} \label{eq:proof:lem-R-K-L-3-2}
H^0\left(T_Z\otimes K_{X\mid Z}\right)=H^0\left(T_Z\otimes K_Z\otimes L_{\mid Z}^{\vee}\right)\cong H^{n-1}\left(\Omega_Z^1\otimes L_{\mid Z}\right)^*,
\end{equation}
which vanishes again by Kodaira-Nakano (on the hypersurface $Z$), because $L_{\mid Z}$ is ample.

\item From diagram \eqref{diag:proof:lem-R-K-L-0}, we obtain a commutative diagram
\begin{equation} \label{diag:proof-R-K-L-4}
\xymatrix{H^0\left(\Omega_X^{n-1}\right) \ar[r]^{\cdot\left(c_1\left(L\right)\right)}_{\cong} \ar@{^(->}[d] & H^1\left(K_X\right) \ar@{=}[d] \\
H^0\left(K_Z\right) \ar[r]^{\nu} & H^1\left(K_X\right)
}
\end{equation}
Since the upper map is an isomorphism, we recover the injectivity of the left map and also obtain that $H^0\left(\Omega_X^{n-1}\right)\oplus\ker\nu=H^0\left(K_Z\right)$. But $\ker\nu=R_{K_X\otimes L}$ by the proof of (\ref{item:lem:R-K-L-dim-n:2}), and the proof is finished.
\end{enumerate}
\end{proof}

\begin{example}
    For $X=\mP^2$, we have $H^0\left(K_X\right)=H^0\left(\Omega_X^1\right)=0$. Hence, for any $Z\subseteq X$ defined by an homogeneous polynomial of degree $n$, Proposition \ref{prop:R-K-L-dim-n} recovers $J_{K_X\otimes L}=J_{n-3}=0$ (since the Jacobian ideal is generated by polynomials of degree $n-1$) and $H^0\left(\Omega_Z^1\right)\cong R_{n-3}=\mC\left[x_0,x_1,x_2\right]_{n-3}$.
\end{example}

\subsection{Jacobian in degree $L$} \label{subsect:degree-L}

We have seen in Proposition \ref{prop:R-K-L-dim-n} that the Jacobian ring $R_{K_X\otimes L}$ of degree $K_X\otimes L$ is a complement in $H^0\left(K_Z\right)$ of the set of top-forms on $Z$, obtained by restricting global $\left(n-1\right)$-forms of $X$. This makes it a good candidate to be the ``most variable'' part of $H^0\left(K_Z\right)$, under embedded deformations of $Z$ in $X$. We now check that the Jacobian ring $R_L$ of degree $L=\caO_X\left(Z\right)$ is a good tool to encode the (Kodaira-Spencer classes of) first-order deformations of $Z$ within the linear system $\left|L\right|$.

The identification $\left|L\right|=\mP\left(H^0\left(L\right)\right)$ also gives $T_{\left[Z\right]}\left|L\right|\cong H^0\left(L\right)/\left\langle\sigma\right\rangle$, where $Z=\left\{\sigma=0\right\}$. This means that every first order deformation of $Z$ inside the linear system $\left|L\right|$ is given by $\left\{\sigma+\epsilon\tau=0\right\}$, where $\tau\in H^0\left(L\right)$ is another section and $\epsilon$ is the parameter (with $\epsilon^2=0$).

We now consider the following two short exact sequences of sheaves:
\begin{equation} \label{eq:restriction}
0 \longra \caO_X \stackrel{\sigma}{\longra} L \longra L_{\mid Z}\cong N_{Z/X}\longra 0
\end{equation}
and
\begin{equation} \label{eq:normal-sequence}
0 \longra T_Z \longra T_{X\mid Z}\longra N_{Z/X}\longra 0.
\end{equation}

From \eqref{eq:restriction}, we obtain an inclusion $T_{\left[Z\right]}\left|L\right|=H^0\left(L\right)/\left\langle\sigma\right\rangle\hookrightarrow H^0\left(N_{Z/X}\right)$, which composed with the first connecting homomorphism $H^0\left(N_{Z/X}\right)\to H^1\left(T_Z\right)$ of \eqref{eq:normal-sequence} gives a map
\begin{equation} \label{eq:restriction-KS}
KS\colon T_{\left[Z\right]}\left|L\right|\longra H^1\left(T_Z\right).
\end{equation}
This is precisely the Kodaira-Spencer map for the family of deformations of $Z$, inside the linear system $\left|L\right|$.

\begin{lemma} \label{lem:KS-RL}
Let $X$ be a smooth compact complex manifold, $Z\subseteq X$ a smooth hypersurface and $L=\caO_X\left(Z\right)$ the associated line bundle. Then the Kodaira-Spencer map \eqref{eq:restriction-KS} factorizes as
\begin{equation} \label{eq:KS-RL}
    T_{\left[Z\right]}\left|L\right|=H^0\left(L\right)/\left\langle\sigma\right\rangle\longra R_L=H^0\left(L\right)/J_L\longra H^1\left(T_Z\right)
\end{equation}
\end{lemma}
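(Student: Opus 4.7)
The plan is to prove that $KS$ vanishes on $J_L$, so that it descends from $H^0\left(L\right)/\left\langle\sigma\right\rangle$ to $R_L=H^0\left(L\right)/J_L$. First I would observe that $\sigma$ itself already lies in $J_L$: taking $D = 1 \in H^0\left(\caO_X\right)\subseteq H^0\left(\Sigma_L\right)$ and recalling from \eqref{eq:action-Sigma_L} that $d\sigma$ restricted to the subsheaf $\caO_X\subseteq\Sigma_L$ is multiplication by $\sigma$, one has $d\sigma\left(1\right)=\sigma$. Hence $\left\langle\sigma\right\rangle \subseteq J_L$, the surjection $H^0\left(L\right)/\left\langle\sigma\right\rangle \thra R_L$ is well defined, and it remains only to verify that $KS$ kills $J_L$.

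To do that, I would pick an arbitrary $\tau=d\sigma\left(D\right)\in J_L$ with $D \in H^0\left(\Sigma_L\right)$, and denote by $\chi \in H^0\left(T_X\right)$ the image of $D$ under the projection $\Sigma_L\thra T_X$ from \eqref{eq:extension-Sigma_L}. By the commutativity of the bottom-right square of diagram \eqref{diag:generalized-differential-complete}, the restriction $\tau_{\mid Z}\in H^0\left(L_{\mid Z}\right)=H^0\left(N_{Z/X}\right)$ coincides with $\widetilde{d\sigma}\left(\chi\right)$; moreover, the explicit description of $\widetilde{d\sigma}$ provided in Proposition \ref{prop:differential-section-kernel}(2) identifies $\tau_{\mid Z}$ with the image of $\chi$ under the composition $H^0\left(T_X\right)\to H^0\left(T_{X\mid Z}\right)\to H^0\left(N_{Z/X}\right)$.

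The conclusion follows from a standard long-exact-sequence argument applied to the normal bundle sequence \eqref{eq:normal-sequence}, which gives
$$H^0\left(T_{X\mid Z}\right) \longra H^0\left(N_{Z/X}\right) \stackrel{\partial}{\longra} H^1\left(T_Z\right).$$
Since $\tau_{\mid Z}$ lies in the image of the first arrow, it is annihilated by $\partial$; by the definition \eqref{eq:restriction-KS} of $KS$ as this connecting morphism $\partial$ precomposed with the restriction $H^0\left(L\right)\to H^0\left(N_{Z/X}\right)$, we conclude $KS\left(\tau\right)=0$. I do not anticipate any real obstacle beyond carefully tracking through diagram \eqref{diag:generalized-differential-complete}; the geometric content is simply that sections $\tau=d\sigma\left(D\right)\in J_L$ correspond to infinitesimal embedded deformations of $Z$ in $X$ that are generated by an ambient vector field $\chi$ on $X$, and are therefore trivial as abstract deformations of $Z$.
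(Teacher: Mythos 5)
Your proposal is correct and follows essentially the same route as the paper: both arguments first note that $\sigma=d\sigma\left(1\right)\in J_L$, and then use the factorization of $\widetilde{d\sigma}$ through $T_{X\mid Z}\thra N_{Z/X}$ to see that every element of $J_L$ restricts to a section of $N_{Z/X}$ lying in the image of $H^0\left(T_{X\mid Z}\right)$, hence in the kernel of the connecting homomorphism of \eqref{eq:normal-sequence}. The only difference is cosmetic: you phrase the inclusion element-wise via a chosen lift $D\in H^0\left(\Sigma_L\right)$, while the paper states it as a chain of inclusions of images.
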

\begin{proof}
Consider the commutative diagram
\begin{equation} \label{diag:proof-R-L-KS-1}
    \xymatrix{0 \ar[r] & \caO_X \ar[r] \ar@{=}[d] & \Sigma_L \ar[r] \ar[d]^{d\sigma} & T_X \ar[r] \ar[d]^{\widetilde{d\sigma}} & 0  \\
    0 \ar[r] & \caO_X \ar[r]^{\cdot\sigma} & L \ar[r] & L_{\mid Z}\cong N_{Z/X} \ar[r] & 0}
\end{equation}
Note first that $\sigma=d\sigma\left(1\right)\in J_L$, hence the first projection $H^0\left(L\right)/\left\langle\sigma\right\rangle\to R_L$ in \eqref{eq:KS-RL} is well-defined and has kernel $J_L/\left\langle\sigma\right\rangle$.

It remains to show that $J_L/\left\langle\sigma\right\rangle\subseteq \ker\left(KS\right)$. To this aim, recall that the map $T_X\to N_{Z/X}$ factorizes as $T_X\to T_{X\mid Z}\to N_{Z/X}$ (Proposition \ref{prop:differential-section-kernel}), and thus
\begin{multline} \label{eq:proof-R-L-KS-2}
J_L/\left\langle\sigma\right\rangle=\im\left(H^0\left(\Sigma_L\right)\to H^0\left(N_{Z/X}\right)\right)\subseteq \im\left(H^0\left(T_X\right)\to H^0\left(N  _{Z/X}\right)\right)\subseteq \\
\subseteq\im\left(H^0\left(T_{X\mid Z}\right)\to H^0\left(N_{Z/X}\right)\right)=\ker\left(H^0\left(N_{Z/X}\right)\stackrel{KS}{\longra}H^1\left(T_Z\right)\right).
\end{multline}
\end{proof}

\begin{definition} \label{df:KS-RL}
By abuse of notation, we will write $KS\colon H^0\left(L\right)\to H^1\left(T_Z\right)$ and $KS\colon R_L\to H^1\left(T_Z\right)$, for the maps induced by \eqref{eq:restriction-KS}, and we will call all three maps indistinctly \emph{Kodaira-Spencer maps}.
\end{definition}

\subsection{The top degree}

We now consider a third (and last) interesting degree, that plays the role of the socle degree in the case of $\mP^2$.

We consider the composition of $n$ connecting homomorphisms, as in \eqref{eq:conn-homom-delta-sigma} (with suitably varying $M$'s)
$$H^0\left(M\right)\longra H^n\left(\bigwedge^n\caE_{\sigma}\otimes L^{-n}\otimes M\right)=H^n\left(\det\caE_{\sigma}\otimes L^{-n}\otimes M\right),$$
which is given by cup-product with the cup-power $\left(\delta\sigma\right)^n\in H^n\left(\bigwedge^n\caE_{\sigma}\otimes L^{-n}\right)$. Composing with the natural isomorphism
\begin{equation} \label{eq:isom-determinants}
    \det\caE_{\sigma}\otimes L^{-n}\cong\det\Sigma_L\otimes L^{-n-1}\cong K_X^{\vee}\otimes L^{-n-1},
\end{equation}
we then obtain a map
\begin{equation} \label{eq:mu-sigma-1}
H^0\left(M\right)\stackrel{\left(\delta\sigma\right)^n}{\longra} H^n\left(K_X^{\vee}\otimes L^{-n-1}\otimes M\right).
\end{equation}
Since the kernel of the first product with $\delta\sigma$ is $J_M$, the above map factors through the generalized Jacobian ring as
\begin{equation} \label{eq:mu-sigma-2}
R_M\to H^n\left(K_X^{\vee}\otimes L^{-n-1}\otimes M\right).
\end{equation}
In particular, for $M=K_X^2\otimes L^{n+1}$ we obtain a map
\begin{equation} \label{eq:natural-trace-map}
    \tr=\tr_{\sigma}\colon R_{K^2\otimes L^{n+1}}\longra H^n\left(K_X\right)\cong\mC.
\end{equation}

\begin{definition} \label{df:natural-trace-map}
    We call the map $\tr$ in \eqref{eq:natural-trace-map} \emph{natural trace map} with respect to the given section $\sigma\in H^0\left(L\right)$.
\end{definition}

We now investigate under which conditions the map \eqref{eq:natural-trace-map} is an isomorphism.

\begin{lemma} \label{lem:top-isomorphism}
If $L$ is sufficiently ample, more precisely if $H^1\left(\Sigma_L\otimes K_X^2\otimes L^n\right)=0$ and
\begin{equation} \label{eq:condition-top-isomorphism}
    H^{r-1}\left(\bigwedge^r\Sigma_L\otimes K_X^2\otimes L^{n+1-r}\right)=H^r\left(\bigwedge^r\Sigma_L\otimes K_X^2\otimes L^{n+1-r}\right)=0
\end{equation}
for all $r=2,\ldots,n$, then $\tr_{\sigma}\colon R_{K_X^2\otimes L^{n+1}}\to H^n\left(K_X\right)\cong\mC$ is an isomorphism.
\end{lemma}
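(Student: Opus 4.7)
By construction, the trace map $\tr_\sigma$ is obtained as the composition of $n$ connecting homomorphisms
\begin{equation}
H^{r-1}\bigl(\bigwedge^{r-1}\caE_{\sigma}\otimes K_X^2\otimes L^{n+2-r}\bigr)\xrightarrow{\cdot\delta\sigma}H^r\bigl(\bigwedge^r\caE_{\sigma}\otimes K_X^2\otimes L^{n+1-r}\bigr),\qquad r=1,\ldots,n,
\end{equation}
arising from the short exact sequences \eqref{eq:split-Koszul} with $M=K_X^2\otimes L^{n+2-r}$, together with the identification $\bigwedge^n\caE_{\sigma}=\det\caE_{\sigma}\cong K_X^{\vee}\otimes L^{\vee}$ of \eqref{eq:det-Esigma}, which gives
\begin{equation}
H^n\bigl(\bigwedge^n\caE_{\sigma}\otimes K_X^2\otimes L\bigr)\cong H^n(K_X)\cong\mC.
\end{equation}
The plan is therefore to check, step by step, that each connecting homomorphism is an isomorphism.

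For $r=1$, the relevant short exact sequence is \eqref{eq:extension-dsigma} tensored with $K_X^2\otimes L^{n+1}$, i.e.
\begin{equation}
0\longra \caE_{\sigma}\otimes K_X^2\otimes L^n\longra \Sigma_L\otimes K_X^2\otimes L^n\stackrel{d\sigma}{\longra} K_X^2\otimes L^{n+1}\longra 0.
\end{equation}
By definition of the generalized Jacobian ideal, the associated long exact sequence yields an exact sequence
\begin{equation}
0\longra R_{K_X^2\otimes L^{n+1}}\xrightarrow{\,\delta\sigma\,}H^1\bigl(\caE_{\sigma}\otimes K_X^2\otimes L^n\bigr)\longra H^1\bigl(\Sigma_L\otimes K_X^2\otimes L^n\bigr),
\end{equation}
so the assumed vanishing $H^1(\Sigma_L\otimes K_X^2\otimes L^n)=0$ identifies $R_{K_X^2\otimes L^{n+1}}$ with $H^1(\caE_{\sigma}\otimes K_X^2\otimes L^n)$ via $\delta\sigma$.

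For each $r=2,\ldots,n$, the long exact sequence attached to \eqref{eq:split-Koszul} (with $M=K_X^2\otimes L^{n+2-r}$) contains the piece
\begin{equation}
H^{r-1}\bigl(\bigwedge^r\Sigma_L\otimes K_X^2\otimes L^{n+1-r}\bigr)\longra H^{r-1}\bigl(\bigwedge^{r-1}\caE_{\sigma}\otimes K_X^2\otimes L^{n+2-r}\bigr)\xrightarrow{\delta\sigma} H^r\bigl(\bigwedge^r\caE_{\sigma}\otimes K_X^2\otimes L^{n+1-r}\bigr)\longra H^r\bigl(\bigwedge^r\Sigma_L\otimes K_X^2\otimes L^{n+1-r}\bigr),
\end{equation}
and the hypotheses \eqref{eq:condition-top-isomorphism} make both flanking groups vanish, so cup-product with $\delta\sigma$ is an isomorphism at this step.

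Composing the isomorphism of Step~1 with the $n-1$ isomorphisms of Step~2 gives an isomorphism $R_{K_X^2\otimes L^{n+1}}\cong H^n(\bigwedge^n\caE_{\sigma}\otimes K_X^2\otimes L)\cong H^n(K_X)$, which by construction is precisely $\tr_\sigma$. There is no real obstacle here beyond careful bookkeeping of the indices in the Koszul filtration; the only conceptual input is the fact, already recorded after \eqref{eq:ses-Koszul-dsigma}, that the extension class of each piece of \eqref{eq:split-Koszul} is $\delta\sigma$, which guarantees that the chain of connecting homomorphisms assembles into the map $\tr_\sigma$ defined via cup powers of $\delta\sigma$.
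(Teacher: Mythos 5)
Your proof is correct and follows essentially the same route as the paper: identify $R_{K_X^2\otimes L^{n+1}}$ with $H^1\left(\caE_{\sigma}\otimes K_X^2\otimes L^n\right)$ via $\delta\sigma$ using the first vanishing hypothesis, then use \eqref{eq:condition-top-isomorphism} to make each subsequent connecting homomorphism in the Koszul filtration an isomorphism, so that the composition, which is $\tr_{\sigma}$ by construction, is an isomorphism. (One cosmetic slip: the twist in \eqref{eq:split-Koszul} is the fixed line bundle $M=K_X^2\otimes L^{n+1}$ for all $r$, not $K_X^2\otimes L^{n+2-r}$; the displayed exact sequences you wrote are nonetheless the correct ones.)
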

\begin{proof}
Set $M=K_X^2\otimes L^{n+1}$ in the exact sequences \eqref{eq:split-Koszul}, which become
\begin{equation} \label{eq:split-Koszul-top-degree}
0\longra\bigwedge^r\caE_{\sigma}\otimes K_X^2\otimes L^{n+1-r}\longra\bigwedge^r\Sigma_L\otimes K_X^2\otimes L^{n+1-r}\longra\bigwedge^{r-1}\caE_{\sigma}\otimes K_X^2\otimes L^{n+2-r}\longra 0.
\end{equation}
The condition $H^1\left(\Sigma_L\otimes K_X^2\otimes L^n\right)=0$ ensures that $R_{K_X^2\otimes L^{n+1}}\stackrel{\delta\sigma}{\cong} H^1\left(\caE_{\sigma}\otimes K_X^2\otimes L^n\right)$. The remaining conditions ensure that the connecting homomorphisms
$$H^{r-1}\left(\bigwedge^{r-1}\caE_{\sigma}\otimes K_X^2\otimes L^{n+2-r}\right)\stackrel{\delta\sigma}{\longra} H^r\left(\bigwedge^r\caE_{\sigma}\otimes K_X^2\otimes L^{n+1-r}\right)$$
are isomorphisms. Thus, multiplication with $\left(\delta\sigma\right)^n$ followed by the isomorphism \eqref{eq:isom-determinants} gives indeed an isomorphism
$$R_{K_X^2\otimes L^{n+1}}\cong H^n\left(\det\caE_{\sigma}\otimes K_X^2\otimes L\right)\cong H^n\left(K_X\right)\cong\mC.$$
\end{proof}

\begin{remark} \label{rmk:top-isomorphism-sufficiently-ample}
    The vanishing conditions in Lemma \ref{lem:top-isomorphism} not only involve $L$ but also $\Sigma_L$. We haven't deduced any ampleness of $\Sigma_L$ from the ampleness of $L$, so it might happen that the exterior powers of $\Sigma_L$ somehow cancel the ampleness of $L$ and that the conditions \eqref{eq:condition-top-isomorphism} are rarely satisfied. This is, however, not the case at all, because of the first fundamental exact sequence \eqref{eq:extension-Sigma_L}, which gives also exact sequences
    $$0 \longra \bigwedge^{r-1}T_X \longra \bigwedge^r\Sigma_L \longra \bigwedge^r T_X \longra 0$$
    and thus, tensoring with $K_X^2\otimes L^{n+1-r}$, we also obtain
    $$0 \longra \Omega_X^{n-r+1}\otimes K_X\otimes L^{n+1-r} \longra \bigwedge^r\Sigma_L\otimes K_X^2\otimes L^{n+1-r} \longra \Omega_X^{n-r}\otimes K_X\otimes L^{n+1-r} \longra 0.$$
    The outer terms of the above exact sequence have no higher cohomology, if $L$ is sufficiently ample, and thus the vanishing conditions of Lemma \ref{lem:top-isomorphism} are satisfied.
\end{remark}

\begin{example}
For $X=\mP^2$ and $Z\subseteq\mP^2$ defined by a polynomial of degree $n$, Lemma \ref{lem:top-isomorphism} is precisely the part of and Macaulay's theorem stating that the (one-dimensional) socle of the Jacobian ring is the piece of degree $3n-6$. Indeed, in this case we have $K_X^2\otimes L^3=\caO_{\mP^2}\left(3n-6\right)$.
\end{example}

\subsection{Natural pairings and duality maps}

For any line bundle $M$, let $M':=K_X^2\otimes L^{n+1}\otimes M^{\vee}$ denote the line bundle such that $M\otimes M'\cong K_X^2\otimes L^{n+1}$. Combining the multiplication map with the natural trace map $\tr_{\sigma}\colon R_{K_X^2\otimes L^{n+1}}\to H^n\left(K_X\right)\cong\mC$, we obtain a pairing
\begin{equation} \label{eq:pairing}
R_M\otimes R_{M'}\longra R_{K_X^2\otimes L^{n+1}}\longra\mC,
\end{equation}
and thus, also (mutually dual) duality maps
\begin{equation} \label{eq:duality}
\lambda_M\colon R_M\longra R_{M'}^* \quad\text{and}\quad \lambda_{M'}\colon R_{M'}\longra R_M^*.
\end{equation}

\begin{remark}
    The main content of Macaulay's theorem for curves in $\mP^2$ (or hypersurfaces in $\mP^n$) is precisely that the pairings \eqref{eq:pairing} are perfect.
\end{remark}

Of course, if the natural trace is not an isomorphism, there is little hope for the pairing \eqref{eq:pairing} to be perfect. But even if $R_{K_X^2\otimes L^{n+1}}\cong\mC$ holds, the pairing will not be perfect in general. Thus, we cannot expect the duality maps $\lambda_M$ to be isomorphisms, but only aim for them to have maximal rank.

We now study the particular cases of duality maps that will be of most interest to us, namely when $X$ is a surface and $M=L$ or $M=K_X\otimes L$.

\begin{proposition} \label{prop:duality-K+L}
If $X$ is a surface and $L$ is an ample line bundle such that $H^1\left(\Omega_X^1\otimes L\right)=0$, then the duality map $\lambda_{K_X\otimes L^2}$ fits into a natural exact sequence
\begin{equation} \label{eq:duality-K+L}
0 \longra H^0\left(\caO_X\right)\stackrel{\cdot c_1\left(L\right)}{\longra} H^1\left(\Omega_X^1\right)\longra R_{K_X\otimes L^2} \stackrel{\lambda_{K_X\otimes L^2}}{\longra} R_{K_X\otimes L}^*\longra 0.
\end{equation}
\end{proposition}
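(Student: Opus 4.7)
Setting $n=2$ and $M=K_X\otimes L^2$ gives $M'=K_X^2\otimes L^3\otimes M^{\vee}\cong K_X\otimes L$, so the map in question reads $\lambda_{K_X\otimes L^2}\colon R_{K_X\otimes L^2}\to R_{K_X\otimes L}^{*}$. The strategy is to identify $\lambda_{K_X\otimes L^2}$ cohomologically as the composition of a Serre-duality isomorphism with the transpose of a natural inclusion, and then read off surjectivity and the kernel from two short exact sequences, a double application of hard Lefschetz, and Serre duality on $X$.

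First, tensoring \eqref{eq:extension-Sigma_L} with $K_X\otimes L$ and using the surface identity $T_X\otimes K_X\cong\Omega_X^1$ produces the short exact sequence
\[
0\longra K_X\otimes L\longra\Sigma_L\otimes K_X\otimes L\longra\Omega_X^1\otimes L\longra 0.
\]
The hypothesis $H^1(\Omega_X^1\otimes L)=0$ together with Kodaira vanishing $H^1(K_X\otimes L)=0$ (since $L$ is ample) gives $H^1(\Sigma_L\otimes K_X\otimes L)=0$, so \eqref{eq:R_M-in-H1} upgrades to an isomorphism $R_{K_X\otimes L^2}\cong H^1(\caE_\sigma\otimes K_X\otimes L)$. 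Since $\caE_\sigma$ has rank $2$ with $\det\caE_\sigma\cong K_X^{\vee}\otimes L^{\vee}$ by \eqref{eq:det-Esigma}, we have $\caE_\sigma^{\vee}\cong\caE_\sigma\otimes K_X\otimes L$, and Serre duality on $X$ produces a natural isomorphism
\[
H^1\bigl(\caE_\sigma\otimes K_X\otimes L\bigr)\stackrel{\cong}{\longra}H^1\bigl(\caE_\sigma\otimes K_X\bigr)^{*}
\]
induced by the wedge map $\caE_\sigma\otimes\caE_\sigma\to\det\caE_\sigma\cong K_X^{\vee}\otimes L^{\vee}$, cup product into $H^2(K_X)$, and the Serre trace. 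The crucial compatibility is that, through the embedding $R_{K_X\otimes L}\hookrightarrow H^1(\caE_\sigma\otimes K_X)$ of \eqref{eq:R_M-in-H1}, this Serre-duality pairing agrees up to a nonzero scalar with the trace-and-multiply pairing \eqref{eq:pairing}. This follows from the multiplicativity of cup product and the fact that, for sections $\alpha\in H^0(K_X\otimes L^2)$ and $\beta\in H^0(K_X\otimes L)$, the class $(\delta\sigma)^2(\alpha\beta)\in H^2(\det\caE_\sigma\otimes K_X^2\otimes L)\cong H^2(K_X)$ equals (up to sign) the wedge-cup product of $\delta\sigma(\alpha)\in H^1(\caE_\sigma\otimes K_X\otimes L)$ and $\delta\sigma(\beta)\in H^1(\caE_\sigma\otimes K_X)$, a standard consequence of the Koszul multiplicative structure underlying $\tr_\sigma$.

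Granting this, $\lambda_{K_X\otimes L^2}$ factors as
\[
R_{K_X\otimes L^2}\cong H^1\bigl(\caE_\sigma\otimes K_X\otimes L\bigr)\stackrel{\mathrm{Serre}}{\cong}H^1\bigl(\caE_\sigma\otimes K_X\bigr)^{*}\twoheadrightarrow R_{K_X\otimes L}^{*},
\]
where the last surjection is the transpose of the inclusion, so $\lambda_{K_X\otimes L^2}$ is surjective with $\ker\lambda_{K_X\otimes L^2}\cong\bigl(H^1(\caE_\sigma\otimes K_X)/R_{K_X\otimes L}\bigr)^{*}$. The long exact sequence of $0\to\caE_\sigma\otimes K_X\to\Sigma_L\otimes K_X\to K_X\otimes L\to 0$ and Kodaira vanishing $H^1(K_X\otimes L)=0$ identify this cokernel with $H^1(\Sigma_L\otimes K_X)$. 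Finally, the long exact sequence of $0\to K_X\to\Sigma_L\otimes K_X\to\Omega_X^1\to 0$, combined with hard Lefschetz (yielding $c_1(L)\colon H^0(\Omega_X^1)\stackrel{\cong}{\to}H^1(K_X)$ and surjectivity of $c_1(L)\colon H^1(\Omega_X^1)\to H^2(K_X)$, the latter being dual to the injective $c_1(L)\colon H^0(\caO_X)\to H^1(\Omega_X^1)$), gives
\[
0\longra H^1(\Sigma_L\otimes K_X)\longra H^1(\Omega_X^1)\stackrel{\cdot c_1(L)}{\longra}H^2(K_X)\longra 0.
\]
Dualizing with Serre duality on $X$ (which sends $H^2(K_X)^{*}\to H^0(\caO_X)$, identifies $H^1(\Omega_X^1)^{*}\cong H^1(\Omega_X^1)$ self-dually, and turns $\cdot c_1(L)$ into itself) produces $0\to H^0(\caO_X)\stackrel{\cdot c_1(L)}{\to}H^1(\Omega_X^1)\to\ker\lambda_{K_X\otimes L^2}\to 0$, which together with the surjectivity of $\lambda_{K_X\otimes L^2}$ assembles into \eqref{eq:duality-K+L}. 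The main obstacle is the Serre-duality compatibility of the generalized Jacobian pairing, which requires careful bookkeeping of the Koszul multiplicative structure and signs; once established, the rest reduces to long-exact-sequence chasing and hard Lefschetz.
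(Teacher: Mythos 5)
Your proposal is correct and follows essentially the same route as the paper: identify $R_{K_X\otimes L^2}\cong H^1\left(\caE_{\sigma}\otimes K_X\otimes L\right)$ via the vanishing of $H^1\left(\Sigma_L\otimes K_X\otimes L\right)$, identify $R_{K_X\otimes L}$ as the kernel of $H^1\left(\caE_{\sigma}\otimes K_X\right)\to H^1\left(\Sigma_L\otimes K_X\right)$, control the latter group by the Lefschetz isomorphism/surjection coming from $0\to K_X\to\Sigma_L\otimes K_X\to\Omega_X^1\to 0$, and dualize using $\caE_{\sigma}^{\vee}\cong\caE_{\sigma}\otimes K_X\otimes L$ and Serre duality. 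The only difference is organizational (you compute $\ker\lambda_{K_X\otimes L^2}$ as the dual of a cokernel rather than dualizing the four-term sequence all at once), and you are in fact slightly more explicit than the paper about the one delicate point, namely that the pairing defining $\lambda_{K_X\otimes L^2}$ via $\tr_{\sigma}$ agrees with the Serre-duality pairing under these identifications.
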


\begin{proof}
Recall from \eqref{eq:R_M-in-H1} that $\delta\sigma$ induces the isomorphism
\begin{equation} \label{eq:duality-K+L-1}
    R_{K_X\otimes L^2} \stackrel{\cdot\delta\sigma}{\cong}\ker\left(H^1\left(\caE_{\sigma}\otimes K_X\otimes L\right)\longra H^1\left(\Sigma_L\otimes K_X\otimes L\right)\right).
\end{equation}

Twisting \eqref{eq:extension-Sigma_L} with $K_X\otimes L$, we obtain $0\to K_X\otimes L\to \Sigma_L\otimes K_X \otimes L \to \Omega_X^1\otimes L \to 0$. Taking into account the hypothesis $H^1\left(\Omega_X^1\otimes L\right)=0$, and that $H^1\left(K_X\otimes L\right)=0$ by Kodaira vanishing, the long exact sequence in cohomology gives
\begin{equation} \label{eq:duality-K+L-2}
    H^1\left(\Sigma_L\otimes K_K\otimes L\right)=0,\quad\text{and thus}\quad R_{K_X\otimes L^2}\cong H^1\left(\caE_{\sigma}\otimes K_X\otimes L\right).
\end{equation}

Again from \eqref{eq:R_M-in-H1}, we have an isomorphicm
\begin{equation} \label{eq:duality-K+L-3}
    R_{K_X\otimes L} \stackrel{\cdot\delta\sigma}{\cong} \ker\left(H^1\left(\caE_{\sigma}\otimes K_X\right)\longra H^1\left(\Sigma_L\otimes K_X\right)\right).
\end{equation}
Since $H^1\left(K_X\otimes L\right)=0$, because $L$ is ample, we obtain an exact sequence
\begin{equation} \label{eq:duality-K+L-4}
    0\longra R_{K_X\otimes L}\longra H^1\left(\caE_{\sigma}\otimes K_X\right)\longra H^1\left(\Sigma_L\otimes K_X\right)\longra H^1\left(K_X\otimes L\right)=0.
\end{equation}

Consider finally the short exact sequence \eqref{eq:extension-Sigma_L} twisted by $K_X$:
\begin{equation} \label{eq:duality-K+L-5}
    0\longra K_X\longra \Sigma_L\otimes K_X\longra \Omega_X^1\longra 0,
\end{equation}
whose connecting homomorphisms in cohomology are given, up to sign, by cup-product with $c_1\left(L\right)$. In particular, we have
\begin{equation} \label{eq:duality-K+L-6}
    H^0\left(\Omega_X^1\right)\stackrel{c_1\left(L\right)}{\longra} H^1\left(K_X\right)\stackrel{0}{\longra} H^1\left(\Sigma_L\otimes K_X\right)\longra H^1\left(\Omega_X^1\right) \stackrel{c_1\left(L\right)}{\longra} H^2\left(K_X\right) \longra 0,
\end{equation}
where the first cup-product with $c_1\left(L\right)$ is an isomorphism, and the second one is surjective, because of Lefschetz theorem. Thus, in particular, we have a short exact sequence
\begin{equation} \label{eq:duality-K+L-7}
    0 \longra H^1\left(\Sigma_L\otimes K_X\right)\longra H^1\left(\Omega_X^1\right)\stackrel{c_1\left(L\right)}{\longra} H^2\left(K_X\right)\longra 0,
\end{equation}
which together with \eqref{eq:duality-K+L-4} gives an exact sequence
\begin{equation} \label{eq:duality-K+L-8}
    0\longra R_{K_X\otimes L}\longra H^1\left(\caE_{\sigma}\otimes K_K\right)\longra H^1\left(\Omega_X^1\right)\stackrel{c_1\left(L\right)}{\longra} H^2\left(K_X\right) \longra 0.
\end{equation}
The statement follows after dualizing \eqref{eq:duality-K+L-8} and using Serre duality in $H^2\left(K_X\right)^*\cong H^0\left(\caO_X\right)$ and
\begin{equation} \label{eq:duality-K+L-9}
    H^1\left(\caE_{\sigma}\otimes K_X\right)^*\cong H^1\left(\caE_{\sigma}^{\vee}\right)\cong H^1\left(\caE_{\sigma}\otimes K_X\otimes L\right)\stackrel{\eqref{eq:duality-K+L-2}}{\cong} R_{K_X\otimes L^2},
\end{equation}
where the isomorphism in the middle follows from
\begin{equation} \label{eq:duality-K+L-10}
    \caE_{\sigma}^{\vee}\cong\det\caE_{\sigma}^{\vee}\otimes \caE \cong \left(\det\Sigma_L^{\vee}\right)\otimes L\otimes\caE_{\sigma}\cong K_X\otimes L\otimes \caE_{\sigma}.
\end{equation}
\end{proof}

\begin{proposition} \label{prop:duality-L}
Suppose that $X$ is a projective surface. If $K_X\otimes L$ is ample and $H^1\left(\Omega_X^1\otimes K_X\otimes L\right)=0$, the duality map $\lambda_{K_X^2\otimes L^2}$ fits into a natural exact sequence
\begin{equation} \label{eq:duality-L}
H^1\left(L\right)^*\stackrel{H^1\left(d\sigma\right)^*}{\longra} H^1\left(\Sigma_L\right)^*\longra R_{K_X^2\otimes L^2} \stackrel{\lambda_{K_X^2\otimes L^2}}{\longra} R_L^*\longra 0.
\end{equation}
\end{proposition}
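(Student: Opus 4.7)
The plan is to parallel the proof of Proposition \ref{prop:duality-K+L}, swapping the roles of source and target of the duality pairing. I will express $R_{K_X^2\otimes L^2}$ as $H^1\left(\caE_{\sigma}\right)^*$ via Serre duality, identify $R_L$ inside $H^1\left(\caE_\sigma\right)$ via the defining sequence $0\to\caE_\sigma\to\Sigma_L\to L\to 0$, and obtain the exact sequence by dualizing the associated long cohomology sequence.

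First, I would upgrade the inclusion $R_{K_X^2\otimes L^2}\hookrightarrow H^1\left(\caE_\sigma\otimes K_X^2\otimes L\right)$ from \eqref{eq:R_M-in-H1} to an isomorphism, which requires $H^1\left(\Sigma_L\otimes K_X^2\otimes L\right)=0$. Twisting \eqref{eq:extension-Sigma_L} by $K_X^2\otimes L$ and using the surface identity $T_X\otimes K_X\cong\Omega_X^1$ yields
$$0\longra K_X^2\otimes L\longra\Sigma_L\otimes K_X^2\otimes L\longra\Omega_X^1\otimes K_X\otimes L\longra 0.$$
Kodaira vanishing applied to $K_X\otimes\left(K_X\otimes L\right)$ (ample by hypothesis) gives $H^1\left(K_X^2\otimes L\right)=0$, and the assumption $H^1\left(\Omega_X^1\otimes K_X\otimes L\right)=0$ disposes of the quotient. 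Combined with Serre duality on the surface and the identity $\caE_\sigma^{\vee}\cong K_X\otimes L\otimes\caE_\sigma$ from \eqref{eq:duality-K+L-10}, this yields
$$H^1\left(\caE_\sigma\right)^*\cong H^1\left(\caE_\sigma^{\vee}\otimes K_X\right)\cong H^1\left(\caE_\sigma\otimes K_X^2\otimes L\right)\cong R_{K_X^2\otimes L^2}.$$

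Next, the long exact sequence of $0\to\caE_\sigma\to\Sigma_L\stackrel{d\sigma}{\to}L\to 0$ gives
$$0\longra R_L\longra H^1\left(\caE_\sigma\right)\longra H^1\left(\Sigma_L\right)\stackrel{H^1\left(d\sigma\right)}{\longra}H^1\left(L\right),$$
where $R_L\hookrightarrow H^1\left(\caE_\sigma\right)$ because $J_L=\im\left(H^0\left(\Sigma_L\right)\to H^0\left(L\right)\right)$. Dualizing this four-term exact sequence and substituting $H^1\left(\caE_\sigma\right)^*\cong R_{K_X^2\otimes L^2}$ produces exactly the claimed sequence, with the first arrow equal to $H^1\left(d\sigma\right)^*$.

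The hard part will be identifying the resulting map $R_{K_X^2\otimes L^2}\to R_L^*$ with $\lambda_{K_X^2\otimes L^2}$. This requires checking that two pairings on $R_L\otimes R_{K_X^2\otimes L^2}$ agree: the Serre duality pairing, given by cup-product
$$H^1\left(\caE_\sigma\right)\otimes H^1\left(\caE_\sigma\otimes K_X^2\otimes L\right)\longra H^2\left(\bigwedge\nolimits^2\caE_\sigma\otimes K_X^2\otimes L\right)\cong H^2\left(K_X\right)=\mC,$$
and the trace-map pairing $R_L\otimes R_{K_X^2\otimes L^2}\to R_{K_X^2\otimes L^3}\stackrel{\tr_\sigma}{\longra}\mC$. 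Their agreement follows formally from the compatibility between section multiplication and cup-product encoded by the split Koszul sequences \eqref{eq:ses-Koszul-dsigma}: under the $\delta\sigma$-transports into cohomology, multiplying two sections corresponds to cup-product of classes, and the double $\delta\sigma$ defining $\tr_\sigma$ factors as the individual $\delta\sigma$-transports of the two factors followed by the canonical contraction $\bigwedge^2\caE_\sigma\cong K_X^{\vee}\otimes L^{\vee}$. This compatibility, implicit in Proposition \ref{prop:duality-K+L}, can be verified by a diagram chase through the split Koszul sequences.
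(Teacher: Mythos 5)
Your proposal is correct and follows essentially the same route as the paper's proof: the same vanishing of $H^1\left(\Sigma_L\otimes K_X^2\otimes L\right)$ via Kodaira and the hypothesis, the same Serre-duality identification $R_{K_X^2\otimes L^2}\cong H^1\left(\caE_{\sigma}\right)^*$ using $\caE_{\sigma}^{\vee}\cong\caE_{\sigma}\otimes K_X\otimes L$, and the same dualization of the four-term sequence coming from $0\to\caE_{\sigma}\to\Sigma_L\to L\to 0$. Your final paragraph verifying that the dualized map really is $\lambda_{K_X^2\otimes L^2}$ (i.e.\ that the Serre-duality cup-product pairing agrees with the trace pairing through the $\delta\sigma$-transports) addresses a compatibility the paper leaves implicit, and your sketch of it is sound.
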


\begin{proof}
As in the previous proof, using \eqref{eq:R_M-in-H1}, we obtain
\begin{equation} \label{eq:R-K2-L2-H1}
    R_{K_X^2\otimes L^2} \stackrel{\cdot\delta\sigma}{\cong} \ker\left(H^1\left(\caE_{\sigma}\otimes K_X^2\otimes L\right)\longra H^1\left(\Sigma_L\otimes K_X^2\otimes L\right)\right).
\end{equation}
From \eqref{eq:extension-Sigma_L}, $H^1\left(\Sigma_L\otimes K_X^2\otimes L\right)$ sits into the exact sequence
$$0=H^1\left(K_X^2\otimes L\right)\longra H^1\left(\Sigma_L\otimes K_X^2\otimes L\right)\longra H^1\left(T_X\otimes K_X^2\otimes L\right)\cong H^1\left(\Omega_X^1\otimes K_X\otimes L\right)=0,$$
where the first term vanishes because $K_X\otimes L$ is ample, and the last term vanishes by hypothesis. Hence, also $H^1\left(\Sigma_L\otimes K_X^2\otimes L\right)=0$ and \eqref{eq:R-K2-L2-H1} turns into an isomorphism $R_{K_X^2\otimes L^2}\stackrel{\cdot\delta\sigma}{\cong}H^1\left(\caE_{\sigma}\otimes K_X^2\otimes L\right)$.

By Serre Duality and the natural isomorphism $\caE_{\sigma}^{\vee} \cong \caE_{\sigma}\otimes K_X\otimes L$, we also obtain
\begin{equation} \label{eq:R-K2-L2-dual}
R_{K_X^2\otimes L^2}\cong H^1\left(\caE_{\sigma}^{\vee}\otimes K_X\right)\cong H^1\left(\caE_{\sigma}\right)^*.
\end{equation}

On the other hand, also from \eqref{eq:R_M-in-H1} we can identify $R_L\stackrel{\cdot\delta\sigma}{\cong}\ker\left(H^1\left(\caE_{\sigma}\right)\to H^1\left(\Sigma_L\right)\right)$, hence we have an exact sequence
$$0\longra R_L\stackrel{\cdot\delta\sigma}{\longra} H^1\left(\caE_{\sigma}\right)\longra H^1\left(\Sigma_L\right)\stackrel{H^1\left(d\sigma\right)}{\longra} H^1\left(L\right).$$
Dualizing it and using the natural isomorphism \eqref{eq:R-K2-L2-dual} we obtain the statement.
\end{proof}

\section{IVHS of curves in surfaces with generalized Jacobian rings}

\label{sect:curves-surfaces}

In this section, we show how the generalized Jacobian rings in the degrees studied above encode almost all necessary information about IVHS of an ample smooth curve $Z$ in a surface $X$.

In general, an (one-parameter) IVHS of a smooth curve $Z$ is given by a symmetric linear map
\begin{equation} \label{eq:IVHS-general}
H^0\left(K_Z\right)\to H^1\left(\caO_Z\right)\cong H^0\left(K_Z\right)^*.
\end{equation}
If the IVHS comes from first-order infinitesimal deformation of $Z$, then it is given (up to non-zero scalar) by cup-product with the Kodaira-Spencer class.

Assume from now on that $X$ is a smooth projective surface and $Z\subseteq X$ is the (smooth) vanishing locus of a section $\sigma$ of an ample line bundle $L$. This gives the whole family of generalized Jacobian rings $R_M$ for $M\in\Pic\left(X\right)$, with the properties developed in the previous section.

If we consider an embedded first-order deformation given by another section $\tau\in H^0\left(L\right)$, the associated IVHS is given by cup-product with $\xi_{\tau}:= KS\left(\tau\right)\in H^1\left(T_Z\right)$, where $KS\colon H^0\left(L\right)\to H^1\left(T_Z\right)$ is as defined in Definition \ref{df:KS-RL} (see Lemma \ref{lem:KS-RL}). In particular, $\xi_{\tau}$ depends only on the class of $\tau$ in $R_L$.

Recall also from Proposition \ref{prop:R-K-L-dim-n} that $H^0\left(K_Z\right)$ is naturally isomorphic to the direct sum of $R_{K_X\otimes L}$ and $H^0\left(\Omega^1_X\right)$. The latter subspace is mapped to zero by any $\xi_{\tau}$ with $\left[\tau\right]\in R_L$, since it is preserved in all embedded deformations of $Z$. Thus, the IVHS $\xi_{\tau}\cdot$ is completely determined by its restriction to $R_{K_X\otimes L}$. Composing this restriction with the dual of the inclusion $\nu\colon R_{K_X\otimes L}\hra H^0\left(K_Z\right)$, $\xi_{\tau}\cdot$ induces a map
\begin{equation} \label{eq:xi-tau-in-R}
R_{K_X\otimes L}\stackrel{\nu}{\hra} H^0\left(K_Z\right)\stackrel{\xi_{\tau}\cdot}{\longra} H^1\left(\caO_Z\right)\stackrel{\nu^*}{\twoheadrightarrow} R_{K_X\otimes L}^*.
\end{equation}
Since both source and target spaces, as well as $\xi_{\tau}$, depend on the generalized Jacobian rings, it is natural to expect  that the map is also encoded on the generalized Jacobian rings. This is actually true, as we will see in Corollary \ref{cor:cup-product-Jacobian-rings}, but in order to prove it, we need the following technical result.

\begin{proposition} \label{prop:cup-product-X-Z}
Let $\tau\in H^0\left(L\right)$ with associated Kodaira-Spencer class $\xi_{\tau}\in H^1\left(T_Z\right)$. Let also $\gamma\in H^0\left(K_X^2\otimes L^2\right)$ and consider its restriction $\gamma_{\mid Z}\in H^0\left(K_X^2\otimes L^2_{\mid Z}\right)\cong H^0\left(K_Z^2\right)$. Then the product
\begin{equation} \label{eq:prop-cup-product-X-Z-1}
\xi_{\tau}\cdot\gamma_{\mid Z}\in H^1\left(T_Z\otimes K_Z^2\right)\cong H^1\left(K_Z\right)
\end{equation}
corresponds to
\begin{equation} \label{eq:prop-cup-product-X-Z-2}
\left(\delta\sigma\right)^2\cdot\tau\cdot\gamma\in H^2\left(\det\left(\caE_{\sigma}\otimes L^{\vee}\right)\otimes L\otimes\left(K_X^2\otimes L^2\right)\right)\cong H^2\left(K_X\right)
\end{equation}
under the natural connecting (iso)morphism $H^1\left(K_Z\right)\to H^2\left(K_X\right)$.
\end{proposition}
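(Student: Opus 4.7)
The strategy is to reduce both sides of the equality to the same class in $H^2(K_X)$ via commutative diagrams relating cohomology on $X$ and on $Z$.

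First I would establish the key identification
\[
\xi_\tau = \overline{\delta\sigma \cdot \tau} \in H^1(T_Z),
\]
where $\overline{\delta\sigma \cdot \tau}$ denotes the image of $\delta\sigma \cdot \tau \in H^1(\caE_\sigma)$ under the natural map $H^1(\caE_\sigma) \to H^1(T_Z)$ obtained by combining the isomorphism $\caE_\sigma \cong T_X(-\log Z)$ with restriction to $Z$ and the natural surjection $T_X(-\log Z)_{\mid Z} \twoheadrightarrow T_Z$ (whose kernel is $\caO_Z$). This is a diagram chase in \eqref{diag:generalized-differential-complete}: the naturality of connecting morphisms, applied to the middle and right columns (which form a morphism of short exact sequences with common quotient the restriction $L \to L_{\mid Z}$), identifies $\delta\sigma \cdot \tau$ with the image of $\tau_{\mid Z}$ under the connecting morphism of $0 \to T_X(-\log Z) \to T_X \to L_{\mid Z} \to 0$. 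A further comparison with the Kodaira-Spencer sequence $0 \to T_Z \to T_{X \mid Z} \to N_{Z/X} \to 0$ then identifies the projection of this class in $H^1(T_Z)$ with $\xi_\tau$.

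Using this identification and the naturality of the cup product, the class $\xi_\tau \cdot \gamma_{\mid Z} \in H^1(T_Z \otimes K_Z^2) \cong H^1(K_Z)$ is obtained as the image of $\delta\sigma \cdot \tau \cdot \gamma \in H^1(X, \caE_\sigma \otimes K_X^2 \otimes L^2)$ under the composition of restriction to $Z$ (using $\left(K_X^2 \otimes L^2\right)_{\mid Z} \cong K_Z^2$) and projection through $(\caE_\sigma)_{\mid Z} \twoheadrightarrow T_Z$. Then I would apply the natural connecting isomorphism $H^1(K_Z) \xrightarrow{\cong} H^2(K_X)$ of the adjunction sequence $0 \to K_X \to K_X \otimes L \to K_Z \to 0$ (an isomorphism because $H^1(K_X \otimes L) = H^2(K_X \otimes L) = 0$ by Kodaira vanishing for the ample line bundle $L$), producing a class in $H^2(K_X)$.

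The final and most delicate step is to show that the composition
\[
H^1(X, \caE_\sigma \otimes K_X^2 \otimes L^2) \longra H^1(K_Z) \xrightarrow{\cong} H^2(K_X)
\]
agrees, up to sign, with the Koszul connecting morphism of \eqref{eq:ses-Koszul-dsigma} taken with $r=2$ and $M = K_X^2 \otimes L^2$, namely
\[
H^1(\caE_\sigma \otimes K_X^2 \otimes L^2) \xrightarrow{\cdot \delta\sigma} H^2\left(\bigwedge^2 \caE_\sigma \otimes K_X^2 \otimes L\right) \xrightarrow{\cong} H^2(K_X),
\]
where the last identification uses $\det \caE_\sigma \cong K_X^\vee \otimes L^\vee$ from \eqref{eq:det-Esigma}. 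Once established, combining this with the fact that $\delta\sigma \cdot \tau\gamma$ is the image of $\tau\gamma \in H^0(K_X^2 \otimes L^3)$ under the first Koszul connecting (also cup product with $\delta\sigma$) will yield $(\delta\sigma)^2 \cdot \tau \cdot \gamma$ as the final class in $H^2(K_X)$, as desired. The hard part is this last identification: it requires constructing a large commutative diagram that interweaves the Koszul complex of $d\sigma$ with the adjunction sequence (extending a suitably twisted version of \eqref{diag:generalized-differential-complete}) and carefully tracking all connecting morphisms together with the canonical isomorphism $\bigwedge^2 \caE_\sigma \otimes K_X^2 \otimes L \cong K_X$; signs are controlled by the standard graded commutativity of cup products.
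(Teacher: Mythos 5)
Your proposal is correct and follows essentially the same route as the paper's proof: identify $\xi_{\tau}$ with the image of $\delta\sigma\cdot\tau$ under $H^1\left(\caE_{\sigma}\right)\to H^1\left(T_Z\right)$, push $\delta\sigma\cdot\tau\cdot\gamma$ down to $\xi_{\tau}\cdot\gamma_{\mid Z}$ by naturality of restriction and cup product, and then compare the Koszul coboundary with the adjunction coboundary. The ``large commutative diagram'' you defer at the end is in the paper just a morphism of short exact sequences from $0\to K_X\to\bigwedge^2\Sigma_L\otimes K_X^2\otimes L\to\caE_{\sigma}\otimes K_X^2\otimes L^2\to 0$ to $0\to K_X\stackrel{\sigma}{\to} K_X\otimes L\to K_Z\to 0$, with the identity on $K_X$ and middle vertical map induced by $\bigwedge^2\Sigma_L\to\det T_X=K_X^{\vee}$, so your outline matches the paper's argument step for step.
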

\begin{proof}
    Consider first the commutative diagram with exact rows
    \begin{equation} \label{diag:proof-cup-product-X-Z-1}
        \xymatrix{0 \ar[r] & \caE_{\sigma}\cong T_X\left(-\log Z\right) \ar[r] \ar@{->>}[d] & \Sigma_L \ar[r] \ar@{->>}[d] & L \ar[r] \ar@{->>}[d] & 0 \\
        0 \ar[r] & T_Z \ar[r] & T_{X\mid Z} \ar[r] & N_{Z/X}\cong L_{\mid Z} \ar[r] & 0},
    \end{equation}
    where the central vertical arrow is the composition of the surjection $\Sigma_L\thra T_X$ and the restriction $T_X\thra T_{X\mid Z}$. The first connecting homomorphisms of both rows fit in the following commutative diagram which summarizes the definition of $\xi_{\tau}$:
    \begin{equation} \label{diag:proof-cup-product-X-Z-2}
        \xymatrix{H^0\left(L\right) \ar[r] \ar[d] & H^1\left(\caE_{\sigma}\right) \ar[d] & & \tau \ar@{|->}[r] \ar@{|->}[d] & \left(\delta\sigma\right)\cdot\tau \ar@{|->}[d] \\
        H^0\left(L_{\mid Z}\right) \ar[r] & H^1\left(T_Z\right) & & \tau_{\mid Z} \ar@{|->}[r] & \xi_{\tau}}
    \end{equation}
    Tensoring with $K_X^2\otimes L^2$ gives the following diagram
    \begin{equation} \label{diag:proof-cup-product-X-Z-3}
        \xymatrix{0 \ar[r] & \caE_{\sigma}\otimes K_X^2\otimes L^2 \ar[r] \ar@{->>}[d] & \Sigma_L\otimes K_X^2\otimes L^2 \ar[r] \ar@{->>}[d] & K_X^2\otimes L^3 \ar[r] \ar@{->>}[d] & 0 \\
        0 \ar[r] & T_Z\otimes K_Z^2 \cong K_Z \ar[r] & T_{X\mid Z}\otimes K_Z^2 \ar[r] & K_Z^2\otimes L_{\mid Z} \ar[r] & 0}
    \end{equation}
    which induces the analogous diagram to \eqref{diag:proof-cup-product-X-Z-2} (in the right square we have multiplied by $\gamma$),
    \begin{equation} \label{diag:proof-cup-product-X-Z-4}
        \xymatrix{H^0\left(K_X^2\otimes L^3\right) \ar[r] \ar[d] & H^1\left(\caE_{\sigma}\otimes K_X^2\otimes L^2\right) \ar[d] & & \tau\cdot\gamma \ar@{|->}[r] \ar@{|->}[d] & \left(\delta\sigma\right)\cdot\tau\cdot\gamma \ar@{|->}[d] \\
        H^0\left(K_Z^2\otimes L_{\mid Z}\right) \ar[r] & H^1\left(K_Z\right) & & \tau_{\mid Z}\cdot\gamma_{\mid Z} \ar@{|->}[r] & \xi_{\tau}\cdot\gamma_{\mid Z}        }
    \end{equation}
    Consider now \eqref{eq:ses-Koszul-dsigma} with $r=2=\rk\caE_{\sigma}$ and tensorize it with $K_X^2\otimes L^3$ to obtain (using \eqref{eq:det-Esigma}):
    \begin{equation} \label{eq:proof-cup-product-X-Z-5}
        0 \longra \det\caE_{\sigma}\otimes K_X^2\otimes L\cong K_X \longra \bigwedge^2\Sigma_L\otimes K_X^2\otimes L \longra \caE_{\sigma}\otimes K_X^2\otimes L^2\longra 0.
    \end{equation}
    From the projection $\Sigma_L\thra T_X$ we can obtain a map $\bigwedge^2\Sigma_L\thra\det T_X=K_X^{\vee}$, which tensored by $K_X^2\otimes L$ fits into the following diagram
    \begin{equation} \label{diag:proof-cup-product-X-Z-6}
        \xymatrix{0 \ar[r] & K_X \ar[r] \ar@{=}[d] & \bigwedge^2\Sigma_L\otimes K_X^2\otimes L \ar[r] \ar@{->>}[d] & \caE_{\sigma}\otimes K_X^2\otimes L^2 \ar[r] \ar@{->>}[d] & 0 \\
        0 \ar[r] & K_X \ar[r] ^{\sigma}& K_X\otimes L \ar[r] & K_Z \ar[r] & 0}
    \end{equation}
    where the rightmost morphism is the same as the left one in \eqref{diag:proof-cup-product-X-Z-3}. We have thus commutative diagrams
    \begin{equation} \label{diag:proof-cup-product-X-Z-7}
        \xymatrix{H^1\left(\caE_{\sigma}\otimes K_X^2\otimes L^2\right) \ar[r] \ar[d] & H^2\left(K_X\right) \ar[d] & & \left(\delta\sigma\right)\cdot\tau\cdot\gamma \ar@{|->}[r] \ar@{|->}[d] & \left(\delta\sigma\right)^2\cdot\tau\cdot\gamma \ar@{=}[d] \\
        H^1\left(K_Z\right) \ar[r] & H^2\left(K_X\right) & & \xi_{\tau}\cdot\gamma_{\mid Z} \ar@{|->}[r] & \left(\delta\sigma\right)^2\cdot\tau\cdot\gamma }
    \end{equation}
    The claim follows combining the lower rows in the right squares of \eqref{diag:proof-cup-product-X-Z-4} and \eqref{diag:proof-cup-product-X-Z-7}.
\end{proof}

In order to write the map \eqref{eq:xi-tau-in-R} in terms of the generalized Jacobian rings, we need a bit more notation. Recall that for any $M\in\Pic\left(X\right)$ we have the duality map \eqref{eq:duality} $\lambda_M\colon R_M\to R_{M'}^*$, where $M'=M^{\vee}\otimes K_X^2\otimes L^3$.

\begin{definition} \label{df:phi}
For two line bundles $M,N$ on the \emph{surface} $X$ and a section $\alpha\in H^0\left(N\right)$, we define the linear map $\varphi_M\left(\alpha\right)\colon R_M\to R_{\left(M\otimes N\right)'}^*=R_{K_X^2\otimes L^3\otimes N^{\vee}\otimes M^{\vee}}^*$ given as the composition
\begin{equation} \label{eq:phi}
\varphi_M\left(\alpha\right)\colon R_M\stackrel{\cdot\alpha}{\longra} R_{M\otimes N}\stackrel{\lambda_{M\otimes N}}{\longra} R_{K_X^2\otimes L^3\otimes M^{\vee}\otimes N^{\vee}}^*
\end{equation}
of the multiplication by $\alpha$ and the duality map of $M\otimes N$.
\end{definition}

\begin{remark} \label{rmk:phi-self-dual}
It follows at once from the definition that
\begin{equation} \label{eq:phi-dual}
\left(\varphi_M\left(\alpha\right)\right)^*=\varphi_{K_X^2\otimes L^3\otimes M^{\vee}\otimes N^{\vee}}\left(\alpha\right).
\end{equation}
\end{remark}

With this last piece of notation, we can state the announced result:

\begin{corollary} \label{cor:cup-product-Jacobian-rings}
Let $\tau\in H^0\left(L\right)$ represent an embedded first-order deformation of $Z\in\left|L\right|$ with Kodaira-Spencer class $\xi_{\tau}:=KS\left(\tau\right)\in H^1\left(T_Z\right)$. Then the cup-product with $\xi_{\tau}$ fits into the following commutative diagram
\begin{equation} \label{diag:comm-diagr-cup-product}
    \xymatrix{H^0\left(K_Z\right) \ar[rr]^{\xi_{\tau}\cdot} & & H^1\left(\caO_Z\right) \ar@{->>}[d]^{\nu^*} \\
    R_{K_X\otimes L} \ar@{^(->}[u]^{\nu} \ar[rr]^{\varphi_{K_X\otimes L}\left(\tau\right)} & & R_{K_X\otimes L}^*}
\end{equation}
In particular, $\rk\varphi_{K_X\otimes L}\left(\tau\right)\leq \rk\left(\xi_{\tau}\cdot\right).$
\end{corollary}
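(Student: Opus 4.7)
The plan is to verify commutativity of \eqref{diag:comm-diagr-cup-product} by pairing with an arbitrary test element $\gamma\in R_{K_X\otimes L}$ and reducing both compositions to the same class in $H^2(K_X)\cong\mC$. The essential input is Proposition \ref{prop:cup-product-X-Z}, which translates a cup product computed intrinsically on $Z$ into a product of global sections on $X$ weighted by the class $(\delta\sigma)^2$. Once the two routes are shown to produce the same scalar, the rank inequality is automatic, since $\nu^*\circ(\xi_\tau\cdot)\circ\nu$ is a composition that factors through $\xi_\tau\cdot$.

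First I would pick representatives $\beta,\gamma\in H^0(K_X\otimes L)$ of classes in $R_{K_X\otimes L}$ and write $\beta_Z=\nu(\beta),\gamma_Z=\nu(\gamma)\in H^0(K_Z)$. Using Serre duality $H^1(\caO_Z)\cong H^0(K_Z)^*$, the map $\nu^*$ appearing in \eqref{diag:comm-diagr-cup-product} is precisely the transpose of $\nu$, so $\nu^*(\xi_\tau\cdot\beta_Z)$ evaluated at $\gamma$ is the Serre pairing $\langle\xi_\tau\cdot\beta_Z,\gamma_Z\rangle_Z$, which by definition unfolds as the cup product $\xi_\tau\cdot\beta_Z\cdot\gamma_Z\in H^1(K_Z)$ followed by the trace $H^1(K_Z)\cong\mC$. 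Rewriting $\beta_Z\cdot\gamma_Z=(\beta\cdot\gamma)_Z$ with $\beta\cdot\gamma\in H^0(K_X^2\otimes L^2)$, Proposition \ref{prop:cup-product-X-Z} (applied to the section $\beta\cdot\gamma$ in the role of $\gamma$ there) identifies this class, under the connecting isomorphism $H^1(K_Z)\cong H^2(K_X)$, with $(\delta\sigma)^2\cdot\tau\cdot\beta\cdot\gamma\in H^2(K_X)$.

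On the other side, by the definition of $\varphi_{K_X\otimes L}$ via \eqref{eq:phi}, the value $\varphi_{K_X\otimes L}(\tau)(\beta)(\gamma)$ is exactly $\tr_\sigma(\tau\cdot\beta\cdot\gamma)$ coming from the duality pairing $R_{K_X\otimes L^2}\otimes R_{K_X\otimes L}\to R_{K_X^2\otimes L^3}\to\mC$. The construction of $\tr_\sigma$ in \eqref{eq:natural-trace-map} is precisely cup-product by $(\delta\sigma)^2$ followed by the canonical isomorphism $\det\caE_\sigma\otimes L^{-2}\cong K_X^{\vee}\otimes L^{-3}$, hence $\tr_\sigma(\tau\cdot\beta\cdot\gamma)=(\delta\sigma)^2\cdot\tau\cdot\beta\cdot\gamma\in H^2(K_X)\cong\mC$. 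Matching this with the previous paragraph gives commutativity, and therefore $\rk\varphi_{K_X\otimes L}(\tau)\leq\rk(\xi_\tau\cdot)$.

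The main obstacle is bookkeeping of the various trace and duality identifications: the same class $(\delta\sigma)^2\cdot\tau\cdot\beta\cdot\gamma\in H^2(K_X)$ has to arise from two a priori different routes, namely the intrinsic cup product on $Z$ combined with Serre duality, and the ambient Koszul-type connecting homomorphisms encoded in $\tr_\sigma$. Proposition \ref{prop:cup-product-X-Z} already performs most of this diagram chase on the ``$\xi_\tau$ side'' by travelling through \eqref{diag:proof-cup-product-X-Z-3} and \eqref{diag:proof-cup-product-X-Z-6}, so the delicate point is only to check that the connecting isomorphism $H^1(K_Z)\cong H^2(K_X)$ of the adjunction sequence $0\to K_X\to K_X\otimes L\to K_Z\to 0$ is compatible with both the Serre trace on $Z$ and the trace $H^2(K_X)\cong\mC$ used to normalise $\tr_\sigma$. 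This is a standard compatibility of residues, but it must be tracked to avoid hidden sign or scalar discrepancies between the two sides.
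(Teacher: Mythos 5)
Your proposal is correct and follows essentially the same route as the paper: both reduce commutativity to the identity $\langle\varphi_{K_X\otimes L}(\tau)(\alpha),\beta\rangle=\langle\nu^*(\xi_\tau\cdot\nu(\alpha)),\beta\rangle$ for test elements, identify the left side with $\tr(\tau\alpha\beta)=(\delta\sigma)^2\cdot\tau\alpha\beta$ via the definition of $\varphi$ and the trace, and invoke Proposition \ref{prop:cup-product-X-Z} to match this with $\xi_\tau\cdot(\alpha\beta)_{\mid Z}$ under the connecting isomorphism $H^1(K_Z)\cong H^2(K_X)$. The compatibility of traces you flag at the end is exactly the content that Proposition \ref{prop:cup-product-X-Z} already packages, so no further argument is needed.
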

\begin{proof}
    Recall from \eqref{eq:phi} that $\varphi_{K_X\otimes L}\left(\tau\right)$ is the composition
    \begin{equation} \label{eq:proof-comm-diagr-cup-product-1}
        R_{K_X\otimes L}\stackrel{\tau\cdot}{\longra} R_{K_X\otimes L^2}\stackrel{\lambda_{K_X\otimes L^2}}{\longra} R_{K_X\otimes L}^*
    \end{equation}
    of the multiplication map by $\tau$ and the duality map $\lambda_{K_X\otimes L^2}$. Moreover, the latter is defined by the natural pairing $R_{K_X\otimes L}\otimes R_{K_X\otimes L^2}\to R_{K_X^2\otimes L^3}\stackrel{\tr}{\longra} H^2\left(K_X\right)\cong\mC$.

    Thus, the commutativity of \eqref{diag:comm-diagr-cup-product} is equivalent to
    \begin{equation} \label{eq:proof-comm-diagr-cup-product-2}
    \left\langle\varphi_{K_X\otimes L}\left(\tau\right)\left(\alpha\right),\beta\right\rangle=\left\langle\nu^*\left(\nu\left(\alpha\right)\cdot\xi_{\tau}\right),\beta\right\rangle
    \end{equation}
    for every $\alpha,\beta\in R_{K_X\otimes L}$ (or even their representatives $\alpha,\beta\in H^0\left(K_X\otimes L\right)$) where with $\left\langle-,-\right\rangle$ we denotes the natural pairing between $R_{K_X\otimes L}$ and its dual $R_{K_X\otimes L}^*$.

    On the one hand, by \eqref{eq:natural-trace-map} and \eqref{eq:phi}, the left hand side of \eqref{eq:proof-comm-diagr-cup-product-2} is identified with the class
    \begin{equation} \label{eq:proof-comm-diagr-cup-product-3}
        \tr\left(\tau\alpha\beta\right)=\left(\delta\sigma\right)^2\cdot\tau\alpha\beta\in H^2\left(K_X\right),
    \end{equation}
    which by Proposition \ref{prop:cup-product-X-Z} corresponds to $\xi_{\tau}\cdot\left(\alpha\beta\right)_{\mid Z}$, under the isomorphism $H^2\left(K_X\right)\cong H^1\left(K_Z\right)$.

    On the other hand, the right hand side of \eqref{eq:proof-comm-diagr-cup-product-2} corresponds to the class
    \begin{equation} \label{eq:proof-comm-diagr-cup-product-4}
        \nu\left(\alpha\right)\cdot\xi_{\tau}\cdot\nu\left(\beta\right)=\xi_{\tau}\alpha_{\mid Z}\beta_{\mid Z}\in H^1\left(K_Z\otimes T_Z\otimes K_Z\right)=H^1\left(K_Z\right).
    \end{equation}
    which clearly agrees with $\xi_{\tau}\cdot\left(\alpha\beta\right)_{\mid Z}$.
\end{proof}

We now prove a generalization of \cite[Lemma 2.4]{FP-plane-curves}, giving a particular property of sections in the kernel of a general $\xi_{\tau}\cdot$, which allows us to tackle the problem directly on $R_{K_X\otimes L}$ without need to consider the elements $\tau\in R_L$.

\begin{lemma} \label{lem:alpha2}
Let $\tau \in H^0\left(L\right)$ be such that $\varphi_{K_X\otimes L}\left(\tau\right)\colon R_{K_X\otimes L}\to R_{K_X\otimes L}^*$ has maximal rank (among all possible $\tau\in H^0\left(L\right)$). Then for every $\alpha\in\ker\varphi_{K_X\otimes L}\left(\tau\right)$ it holds $\lambda_{K_X^2\otimes L^2}\left(\alpha^2\right)=0\in R_L^*$.
\end{lemma}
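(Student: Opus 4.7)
The plan is to leverage the maximal rank assumption by perturbing $\tau$ inside $H^0\left(L\right)$ and then combining the resulting orthogonality with the self-duality of $\varphi_{K_X\otimes L}\left(\eta\right)$ provided by Remark \ref{rmk:phi-self-dual}. Two preliminary observations are in order: first, since $\lambda_{K_X\otimes L^2}$ and multiplication by a section are linear, the assignment $\eta\mapsto\varphi_{K_X\otimes L}\left(\eta\right)$ is $\mC$-linear from $H^0\left(L\right)$ to $\Hom\left(R_{K_X\otimes L},R_{K_X\otimes L}^*\right)$; second, applying Remark \ref{rmk:phi-self-dual} with $M=K_X\otimes L$ and $N=L$ one computes $K_X^2\otimes L^3\otimes\left(K_X\otimes L\right)^{\vee}\otimes L^{\vee}=K_X\otimes L$, so every operator $\varphi_{K_X\otimes L}\left(\eta\right)$ is self-dual.

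For an arbitrary $\eta\in H^0\left(L\right)$ I would consider the affine family of self-dual operators
\begin{equation*}
\psi\left(s\right):=\varphi_{K_X\otimes L}\left(\tau+s\eta\right)=\varphi_{K_X\otimes L}\left(\tau\right)+s\,\varphi_{K_X\otimes L}\left(\eta\right).
\end{equation*}
Since $\rk\psi\left(0\right)$ is maximal among all $\varphi_{K_X\otimes L}\left(\tau'\right)$ with $\tau'\in H^0\left(L\right)$, while rank is lower semicontinuous, it follows that $\rk\psi\left(s\right)$ is constant in a disk around $s=0$. The next step, which I expect to be the main technical point to justify with care, is the standard constant-rank deformation fact: for any $\alpha\in\ker\psi\left(0\right)$ there exists an analytic path $\alpha\left(s\right)=\alpha+s\beta+O\left(s^2\right)$ with $\psi\left(s\right)\left(\alpha\left(s\right)\right)=0$. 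Differentiating at $s=0$ gives
\begin{equation*}
\varphi_{K_X\otimes L}\left(\eta\right)\left(\alpha\right)=-\varphi_{K_X\otimes L}\left(\tau\right)\left(\beta\right)\in\im\varphi_{K_X\otimes L}\left(\tau\right).
\end{equation*}

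Finally, I would use the self-duality of $\varphi_{K_X\otimes L}\left(\tau\right)$, which ensures that its image inside $R_{K_X\otimes L}^*$ is the annihilator of its kernel. Pairing the displayed element with $\alpha\in\ker\varphi_{K_X\otimes L}\left(\tau\right)$ itself therefore produces $\left\langle\varphi_{K_X\otimes L}\left(\eta\right)\left(\alpha\right),\alpha\right\rangle=0$. Unwinding Definition \ref{df:phi} and the trace pairing \eqref{eq:pairing}, this reads precisely $\tr_{\sigma}\left(\eta\cdot\alpha^2\right)=0$. Since $\eta$ was an arbitrary element of $H^0\left(L\right)$ and both sides descend to $R_L$, the element $\lambda_{K_X^2\otimes L^2}\left(\alpha^2\right)\in R_L^*$ --- which by construction sends $\left[\eta\right]$ to $\tr_{\sigma}\left(\alpha^2\eta\right)$ --- is the zero functional, giving the claim. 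Apart from the analytic deformation step, the whole argument is a formal consequence of the linearity and self-duality of $\varphi_{K_X\otimes L}$ combined with the trace pairing.
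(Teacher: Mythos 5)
Your proposal is correct and follows essentially the same route as the paper: perturb $\tau$ to $\tau+s\eta$, use maximality plus lower semicontinuity to get constant rank and hence a holomorphic section $\alpha\left(s\right)$ of the kernels, differentiate at $s=0$, and kill the resulting term by the symmetry of the trace pairing (which you phrase as self-duality of $\varphi_{K_X\otimes L}\left(\tau\right)$, the paper as $\tr\left(\tau\cdot\alpha'\left(0\right)\cdot\alpha\right)=\tr\left(\tau\cdot\alpha\cdot\alpha'\left(0\right)\right)=0$). The constant-rank deformation step you flag is exactly what the paper also invokes, namely that the kernels form a holomorphic subbundle over a disk where the rank is constant.
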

\begin{proof}
The condition $\alpha\in\ker\varphi_{K_X\otimes L}\left(\tau\right)$ means that $\lambda_{K_X\otimes L^2}\left(\tau\cdot\alpha\right)=0\in R_{K_X\otimes L}^*$, i.e.
\begin{equation} \label{eq:condition-alpha2}
\tr\left(\tau\cdot\alpha\cdot\beta\right)=0\in H^2\left(K_X\right)
\end{equation}
for every $\beta\in R_{K_X\otimes L}$.

Assuming this, we want to prove that $\tr\left(\alpha^2\cdot w\right)=0$ for every $w\in R_L$.

Let $w\in R_L$ be arbitrary. By semicontinuity, the rank of $\varphi_{K_X\otimes L}\left(\tau\right)$ is lower-semicontinuous, i.e. the maximum is attained for $\tau$ on a dense open subset of $H^0\left(L\right)$. In particular, there is a small neighbourhood $U\subseteq \mC$ of $0\in\mC$ such that $\varphi_{K_X\otimes L}\left(\tau+tw\right)$ is of maximal rank for all $t\in U$. Moreover, when $t$ varies, the kernels of $\varphi_{K_X\otimes L}\left(\tau+tw\right)$ form a (holomorphic) vector subbundle of the trivial vector bundle with fibre $R_{K_X\otimes L}$ over $U$. Thus, $\alpha\in\ker\varphi_{K_X\otimes L}\left(\tau \right)$ can be extended to a (holomorphic) section over $U$ with $\alpha\left(t\right)\in\ker\varphi_{K_X\otimes L}\left(\tau+tw\right)\subseteq R_{K_X\otimes L}$ for every $t\in U$ and $\alpha\left(0\right)=\alpha$. This means
\begin{equation} \label{eq:section-kernel-1}
\varphi_{K_X\otimes L}\left(\tau+tw\right)\left(\alpha\left(t\right)\right)=0\in R_{K_X\otimes L}^*\quad\forall\,t\in U,
\end{equation}
or in other words
\begin{equation} \label{eq:section-kernel-2}
\tr\left(\left(\tau+tw\right)\cdot\alpha\left(t\right)\cdot\beta\right)=0\in H^2\left(K_X\right)\cong\mC\quad\forall\,t\in U,\forall\,\beta\in R_{K_X\otimes L}.
\end{equation}
Differentiating \eqref{eq:section-kernel-2} and evaluating at $t=0$, we obtain
\begin{equation} \label{eq:section-kernel-3}
\tr\left(w\cdot\alpha\left(0\right)\cdot\beta+\tau\cdot\alpha'\left(0\right)\cdot\beta\right)=0\quad\forall\,\beta\in R_{K_X\otimes L}.
\end{equation}
In particular, for $\beta=\alpha=\alpha\left(0\right)$ we have
\begin{equation} \label{eq:section-kernel-4}
\tr\left(w\cdot\alpha^2\right)=-\tr\left(\tau\cdot\alpha'\left(0\right)\cdot\alpha\right)=0,
\end{equation}
where the last equality follows from \eqref{eq:condition-alpha2} with $\beta=\alpha'\left(0\right)$.
\end{proof}

In view of Lemma \ref{lem:alpha2}, one way to prove that the general first-order embedded deformation of $Z$ has maximal infinitesimal variation is to prove that the only $\alpha\in R_{K_X\otimes L}$ with $\lambda_{K_X^2\otimes L^2}\left(\alpha^2\right)=0\in R_L^*$ is $\alpha=0$.

\begin{remark} \label{rmk:approach-quadrics}
To prove the injectivity of $\varphi_{K_X\otimes L}\left(\tau\right)$ for a general $\tau\in R_L$, this approach can only work if $\dim R_{K_X\otimes L}\leq \dim R_L$, which is not expected if $K_X$ is positive. Indeed, for a given $\tau\in R_L$, the condition $\tr\left(\tau\cdot\alpha^2\right)=0$ defines a quadric $Q_{\tau}\subseteq \mP\left(R_{K_X\otimes L}\right)$. Thus, $\lambda_{K_X^2\otimes L^2}\left(\alpha^2\right)=0$ is equivalent to $\alpha$ lying on the intersection $Q_{\tau_1}\cap\ldots\cap Q_{\tau_m}$ for $\tau_1,\ldots,\tau_m\in R_L$ a basis. If $\dim R_L=m\leq \dim\mP\left(R_{K_X\otimes L}\right)=\dim R_{K_X\otimes L}-1$ the intersection won't be empty, hence there will be non-zero $\alpha$'s with $\lambda_{K_X^2\otimes L^2}\left(\alpha^2\right)=0\in R_L^*$.

Thus, we only expect this approach to work for curves on surfaces with somehow non-positive canonical bundle, such as del Pezzo surfaces or at most K3 or Abelian surfaces.
\end{remark}

It will actually be more convenient to work with the representatives $\alpha\in H^0\left(K_X\otimes L\right)$ (which we still denote as $\alpha$ for the sake of simplicity), and thus consider the elements in the kernel of the composition
\begin{equation} \label{eq:definition-lambda}
    \lambda\colon H^0\left(K_X^2\otimes L^2\right)\longra R_{K_X^2\otimes L^2}\stackrel{\lambda_{K_X^2\otimes L^2}}{\longra}R_L^*
\end{equation}
Our aim is then to prove that the only $\alpha\in H^0\left(K_X\otimes L\right)$ with $\lambda\left(\alpha^2\right)=0$ are those lying on $J_{K_X\otimes L}$. This leads to the following definition.

\begin{definition} \label{df:D}
Let $D\subseteq\mP\left(H^0\left(K_X^2\otimes L^2\right)\right)$ be defined as
\begin{equation} \label{eq:D}
D=\left\{\left[\alpha\beta\right]\in \mP\left(H^0\left(K_X^2\otimes L^2\right)\right)\,\mid\,\alpha,\beta\in H^0\left(K_X\otimes L\right), \alpha\beta\in \ker\lambda\right\},
\end{equation}
where from now on we use square brackets to denote the corresponding point in the projectivization of a vector space.
\end{definition}
In other words, $D$ is the intersection of the linear subspace $\mP\left(\ker\lambda\right)$ and the image of the Segre-multiplication map $\mP\left(H^0\left(K_X\otimes L\right)\right)\times\mP\left(H^0\left(K_X\otimes L\right)\right)\to \mP\left(H^0\left(K_X^2\otimes L^2\right)\right)$.

\begin{theorem} \label{thm:lower-bound-dim-D}
Assume $L$ has not maximal infinitesimal variation at a smooth $Z\in\left|L\right|$. Then it holds
\begin{equation} \label{eq:lower-bound-dim-D}
\dim D\geq h^0\left(K_X\otimes L\right)+p_g\left(X\right)-1.
\end{equation}
\end{theorem}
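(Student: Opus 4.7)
I will build inside $D$ a subvariety of the required dimension by combining two families of sections whose products lie in $\ker\lambda$: the ``Jacobian piece'' $J_{K_X\otimes L}\cdot H^0(K_X\otimes L)\subseteq \ker\lambda$ and the ``kernel piece'' $V\cdot V\subseteq \ker\lambda$ supplied by Lemma \ref{lem:alpha2}. An incidence-variety and fibre-dimension argument then gives the bound.

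\textbf{Step 1 (setup).} By Corollary \ref{cor:cup-product-Jacobian-rings}, the hypothesis is equivalent to the statement that $\varphi_{K_X\otimes L}(\tau)$ has rank strictly less than $\dim R_{K_X\otimes L}$ for every $\tau\in H^0\left(L\right)$. I pick $\tau_0$ realising the maximal such rank and set $K:=\ker\varphi_{K_X\otimes L}(\tau_0)\subseteq R_{K_X\otimes L}$ and $V:=\pi^{-1}(K)\subseteq H^0(K_X\otimes L)$, so that $\dim K\geq 1$, $V\supseteq J_{K_X\otimes L}$ and $\dim V=\dim K+p_g(X)\geq 1+p_g(X)$.

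\textbf{Step 2 (two inclusions in $\ker\lambda$).} Lemma \ref{lem:alpha2} gives $\lambda_{K_X^2\otimes L^2}([\alpha^2])=0$ for every $\alpha\in V$. Polarising through $\alpha\beta=\tfrac{1}{2}\bigl((\alpha+\beta)^2-\alpha^2-\beta^2\bigr)$ yields $V\cdot V\subseteq \ker\lambda$. Independently, from Proposition \ref{prop:R-K-L-dim-n}(1) and the definitions,
\[
J_{K_X\otimes L}\cdot H^0(K_X\otimes L)=\sigma\cdot H^0(K_X)\cdot H^0(K_X\otimes L)\subseteq \sigma\cdot H^0(K_X^2\otimes L)\subseteq J_{K_X^2\otimes L^2}\subseteq\ker\lambda.
\]

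\textbf{Step 3 (incidence variety).} I form the incidence
\[
\widetilde{\mathcal Y}:=\bigl\{([\alpha],[\beta])\in \mP H^0(K_X\otimes L)\times \mP V \ :\ \alpha\beta\in\ker\lambda\bigr\}
\]
and the multiplication $\mu\colon \widetilde{\mathcal Y}\to \mP H^0(K_X^2\otimes L^2)$, $([\alpha],[\beta])\mapsto[\alpha\beta]$, whose image lies in $D$. Projecting $\widetilde{\mathcal Y}$ onto $\mP V$: over $[\beta]\in\mP J_{K_X\otimes L}$ the fibre is all of $\mP H^0(K_X\otimes L)$ by Step 2, yielding a subvariety of $\widetilde{\mathcal Y}$ of dimension $h^0(K_X\otimes L)+p_g(X)-2$; over $[\beta]\in\mP V\setminus \mP J_{K_X\otimes L}$ the fibre contains $\mP V$, hence has dimension at least $\dim V-1$.

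\textbf{Step 4 (dimension count).} The stratum of $\widetilde{\mathcal Y}$ over $\mP J_{K_X\otimes L}$ already gives a subvariety of dimension $h^0(K_X\otimes L)+p_g(X)-2$, one short of the target. The missing dimension comes from the jump of the fibre dimension as $[\beta]$ crosses from $\mP J_{K_X\otimes L}$ into $\mP V\setminus \mP J_{K_X\otimes L}$, together with the movement of $V$ as $\tau$ varies in the dense open subset of $H^0(L)$ where the rank of $\varphi_{K_X\otimes L}(\tau)$ is maximal. Using that the multiplication map $\mu$ is generically finite on Segre-type pairs for sufficiently ample $L$, this produces a subvariety of $D$ of dimension at least $h^0(K_X\otimes L)+p_g(X)-1$.

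\textbf{Main obstacle.} The delicate part is Step 4: naively taking the maximum of the two stratum-wise contributions yields only $h^0(K_X\otimes L)+p_g(X)-2$. The extra dimension must be extracted either from the jump of fibre dimension at the boundary $\mP J_{K_X\otimes L}\subseteq \mP V$, or from varying $\tau$ over the open set of maximal-rank sections (which moves both $K$ and $V$). Either way one must verify a non-degeneracy statement ensuring that this additional dimension survives the projection $\mu$, i.e.\ that the pieces one glues together do not collapse onto the Segre image of $\mP H^0(K_X\otimes L)\times\mP J_{K_X\otimes L}$.
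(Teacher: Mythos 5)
Your proposal stalls exactly where you say it does, and the gap is real: the two strata you exhibit give at best $h^0\left(K_X\otimes L\right)+p_g\left(X\right)-2$ (from $\mP\left(H^0\left(K_X\otimes L\right)\right)\times\mP\left(J_{K_X\otimes L}\right)$) and $2\dim\mP\left(V\right)$ (from $\mP\left(V\right)\times\mP\left(V\right)$, which can be far smaller since $\dim K$ may be $1$), and no amount of ``gluing at the boundary'' extracts the missing dimension without a genuinely new input. The paper's resolution is not a jump/non-degeneracy statement but a cancellation that your setup never makes visible. Concretely, it works with $Y=\left\{\left[\alpha\right]\,\mid\,\alpha^2\in\ker\lambda\right\}$ and $W=\left\{\left(\left[\alpha\right],\left[\beta\right]\right)\in Y\times\mP\left(H^0\left(K_X\otimes L\right)\right)\,\mid\,\alpha\beta\in\ker\lambda\right\}$, notes that multiplication $W\to D$ is finite (a divisor has finitely many decompositions into two effective summands), and computes the fibre of $W\to Y$ over a general $\left[\alpha\right]$ to have dimension exactly $h^0\left(K_X\otimes L\right)-\rk\varphi_L\left(\alpha\right)-1$, using $\left(\varphi_{K_X\otimes L}\left(\alpha\right)\right)^*=\varphi_L\left(\alpha\right)$. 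The whole point is then to prove $\dim Y\geq\rk\varphi_L\left(\alpha\right)+p_g\left(X\right)$ for such $\alpha$, so that the two occurrences of $\rk\varphi_L\left(\alpha\right)$ cancel and yield the stated bound.

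That lower bound on $\dim Y$ is obtained from the incidence variety $I=\left\{\left(\left[\tau\right],\left[\alpha\right]\right)\in U\times\mP\left(H^0\left(K_X\otimes L\right)\right)\,\mid\,\tau\alpha\in\ker\lambda'\right\}$ over the \emph{whole} open set $U$ of maximal-rank $\tau$'s, not a single $\tau_0$: the non-maximality hypothesis forces every fibre of $I\to U$ to strictly contain $\mP\left(J_{K_X\otimes L}\right)$, hence to have dimension at least $p_g\left(X\right)$, so $\dim I\geq h^0\left(L\right)+p_g\left(X\right)-1$; by Lemma \ref{lem:alpha2} the second projection lands in $Y$, and its fibre over $\left[\alpha\right]$ sits inside $\mP\left(\ker\left(H^0\left(L\right)\to R_L\stackrel{\varphi_L\left(\alpha\right)}{\to}R_{K_X\otimes L}^*\right)\right)$, of dimension $h^0\left(L\right)-\rk\varphi_L\left(\alpha\right)-1$. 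Subtracting gives precisely $\dim Y\geq\rk\varphi_L\left(\alpha\right)+p_g\left(X\right)$. Your Step~4 gestures at ``varying $\tau$ over the maximal-rank locus,'' which is the right instinct, but without bounding the fibres of the projection $I\to Y$ in terms of $\rk\varphi_L\left(\alpha\right)$ (and then seeing that this rank cancels against the fibre dimension of $W\to Y$) the argument does not close. Your polarisation observation $V\cdot V\subseteq\ker\lambda$ is correct but is not what is needed; the decisive objects are the pairs $\left(\left[\tau\right],\left[\alpha\right]\right)$ with $\tau\in H^0\left(L\right)$, not pairs of elements of $H^0\left(K_X\otimes L\right)$.
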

\begin{proof}
The proof follows some ideas of \cite[Proposition 3.4]{FP-plane-curves}. Define
\begin{equation} \label{eq:Y}
Y:=\left\{\left[\alpha\right]\in\mP\left(H^0\left(K_X\otimes L\right)\right)\,\mid\,\alpha^2\in \ker\lambda\right\}
\end{equation}
and the following incidence variety
\begin{equation} \label{eq:Z}
W:=\left\{\left(\left[\alpha\right],\left[\beta\right]\right)\in Y\times\mP\left(H^0\left(K_X\otimes L\right)\right)\,\mid\,\alpha\beta\in \ker\lambda\right\}.
\end{equation}
The projection onto the first factor defines a surjective morphism $\pi\colon W\to Y$ ($\beta=\alpha$ gives a preimage for any $\left[\alpha\right]\in Y$), and multiplication defines a morphism $\psi\colon W\to D$.

Note first that $\psi$ is a finite morphism: the zero locus of any element $\left[\gamma\right]\in D\subseteq\mP\left(H^0\left(K_X^2\otimes L^2\right)\right)$ has finitely many decompositions into the sum of two effective divisors, hence in particular $\gamma$ admits finitely many decompositions (up to multiplication of one factor by $a\in\mC^*$) of the form $\gamma=\alpha\beta$ with $\alpha,\beta\in H^0\left(K_X\otimes L\right)$. Thus,
\begin{equation} \label{eq:W-D}
\dim W\leq \dim D.
\end{equation}
We consider now the fibres of $\pi$: for $\left[\alpha\right]\in Y$, it holds
\begin{align} \label{eq:fibres-pi}
\pi^{-1}\left(\left[\alpha\right]\right)&=\left\{\left[\alpha\right]\right\}\times\left\{\left[\beta\right]\in\mP\left(H^0\left(K_X\otimes L\right)\right)\,\mid\,\alpha\beta\in \ker\lambda\right\} \\
&\cong\mP\left(\ker\left(H^0\left(K_X\otimes L\right)\stackrel{\cdot\alpha}{\longra}H^0\left(K_X^2\otimes L^2\right)\stackrel{\lambda}{\longra} R_L^*\right)\right)
\end{align}
The latter composition factorizes also as $H^0\left(K_X\otimes L\right)\twoheadrightarrow R_{K_X\otimes L}\stackrel{\varphi_{K_X\otimes L}\left(\alpha\right)}{\longra} R_L^*$, where the first map is surjective, and thus
\begin{align}
\dim\pi^{-1}\left(\left[\alpha\right]\right)&=\dim\ker\left(H^0\left(K_X\otimes L\right)\twoheadrightarrow R_{K_X\otimes L}\stackrel{\varphi_{K_X\otimes L}\left(\alpha\right)}{\longra} R_L^*\right)-1
\\
&=h^0\left(K_X\otimes L\right)-\rk\varphi_{K_X\otimes L}\left(\alpha\right)-1 \label{eq:dim-fibres-pi} \\
&\stackrel{\eqref{eq:phi-dual}}{=}h^0\left(K_X\otimes L\right)-\rk\varphi_L\left(\alpha\right)-1
\end{align}
Since $\pi\colon W\to Y$ is a surjective morphism, it holds
\begin{equation} \label{eq:dim-W-2}
\dim W=\dim Y+\dim\pi^{-1}\left(\left[\alpha\right]\right)=\dim Y+h^0\left(K_X\otimes L\right)-\rk\varphi_L\left(\alpha\right)-1
\end{equation}
for a general $\left[\alpha\right]\in Y$.

It remains to find a lower bound on the dimension of $Y$. To this aim we introduce one further incidence variety. Denote by $\lambda'$ the composition
\begin{equation} \label{eq:lambda'}
\lambda'\colon H^0\left(K_X\otimes L^2\right)\twoheadrightarrow R_{K_X\otimes L^2}\stackrel{\lambda_{K_X\otimes L^2}}{\longra} R_{K_X\otimes L}^*,
\end{equation}
let $U\subseteq\mP\left(H^0\left(L\right)\right)$ be the dense open subset consisting of the $\left[\tau\right]$ for which $\varphi_{K_X\otimes L}\left(\tau\right)$ has maximal rank, and set
\begin{equation} \label{eq:I-lower-bound}
I:=\left\{\left(\left[\tau\right],\left[\alpha\right]\right)\in U\times\mP\left(H^0\left(K_X\otimes L\right)\right)\,\mid\, \tau\alpha\in\ker\lambda'\right\},
\end{equation}
which is equipped with two projections $\mu_1\colon I\to U$ and $\mu_2\colon I\to\mP\left(H^0\left(K_X\otimes L\right)\right)$.

The fibre of $\mu_1$ over a given $\left[\tau\right]\in U$ is the projectivization of the kernel of the composition
\begin{equation} \label{eq:fibre-mu1-1}
H^0\left(K_X\otimes L\right)\longra H^0\left(K_X\otimes L\right)/J_{K_X\otimes L}=R_{K_X\otimes L}\stackrel{\varphi_{K_X\otimes L}\left(\tau\right)}{\longra} R_{K_X\otimes L}^*,
\end{equation}
which obviously contains $J_{K_X\otimes L}=H^0\left(K_X\right)\cdot\sigma$ (Proposition \ref{prop:R-K-L-dim-n}), with equality if and only if $\varphi_{K_X\otimes L}\left(\tau\right)$ is an isomorphism. Thus, $L$ has maximal infinitesimal variation at $Z\in\left|L\right|$ if and only if the general fibre of $\mu_1$ has dimension precisely $\dim\mP\left(J_{K_X\otimes L}\right)=p_g\left(X\right)-1$. By assumption, $L$ has not maximal infinitesimal variation, and hence $\mu_1$ is surjective and its general fibre has dimension at least $p_g\left(X\right)$. Therefore, it holds
\begin{equation} \label{eq:dim-I-1}
\dim I\geq \dim U+p_g\left(X\right)=\dim\mP\left(H^0\left(L\right)\right)+p_g\left(X\right)=h^0\left(L\right)+p_g\left(X\right)-1.
\end{equation}
We now consider the projection $\mu_2$. By Lemma \ref{lem:alpha2}, for every $\left(\left[\tau\right],\left[\alpha\right]\right)\in I$ it holds $\lambda\left(\alpha^2\right)=0$, i.e. $\left[\alpha\right]\in Y$ and thus $\mu_2\left(I\right)\subseteq Y$. As for the fibers of $\mu_2$ it holds
\begin{equation} \label{eq:fibre-m2}
\mu_2^{-1}\left(\left[\alpha\right]\right)\cong\left\{\left[\tau\right]\in U\,\mid\,\varphi_L\left(\alpha\right)\left(\tau\right)=0\in R_{K_X\otimes L}^*\right\}=U\cap \mP\left(\ker\left(H^0\left(L\right)\twoheadrightarrow R_L\stackrel{\varphi_L\left(\alpha\right)}{\longra} R_{K_X\otimes L}^*\right)\right).
\end{equation}
All in all we obtain
\begin{equation} \label{eq:dim-I-2}
\dim I=\dim\mu_2\left(I\right)+\dim\mu_2^{-1}\left(\left[\alpha\right]\right)\leq\dim Y+\dim\ker\varphi_L\left(\alpha\right)+\dim J_L-1
\end{equation}
for a general $\alpha\in\mu_2\left(I\right)$. Combining \eqref{eq:dim-I-1} and \eqref{eq:dim-I-2}, we obtain
\begin{equation} \label{eq:dim-Y}
\dim Y\geq h^0\left(L\right)-\dim J_L-\dim\ker\varphi_L\left(\alpha\right)+p_g\left(X\right)=\rk\varphi_L\left(\alpha\right)+p_g\left(X\right),
\end{equation}
which with \eqref{eq:W-D} and \eqref{eq:dim-W-2} gives the wanted
\begin{equation} \label{eq:dim-W-3}
\dim D\geq \dim W\geq h^0\left(K_X\otimes L\right)+p_g\left(X\right)-1.
\end{equation}
\end{proof}

\section{Curves in $\mP^1\times\mP^1$}

\label{sect:P1-P1}

We now show that for $X=\mP^1\times\mP^1$ and $L$ sufficiently ample (with a very explicit condition on the bidegree) we can obtain an upper-bound on $\dim D$, contradicting the lower bound of Theorem \ref{thm:lower-bound-dim-D}, thus proving Theorem \ref{main-thm}.

Let us then fix $X=\mP^1\times\mP^1$. Recall that line bundles on $\mP^1\times\mP^1$ are determined by its bidegree. We denote by $\caO_X\left(d,e\right)=\caO_{\mP^1}\left(d\right)\boxtimes\caO_{\mP^1}\left(e\right)$. More generally, for any sheaf $\caF$ of $\caO_X$-modules on $X$ we denote by $\caF\left(d,e\right)=\caF\otimes\caO_X\left(d,e\right)$.

We also denote by $\left(x_0:x_1\right)$ (resp. $\left(y_0:y_1\right)$) homogeneous coordinates on the first (resp. second) factor. Consider the polynomial ring $S:=\mC\left[x_0,x_1,y_0,y_1\right]$, bigraded so that $\deg\left(x_i\right)=\left(1,0\right)$ and $\deg\left(y_i\right)=\left(0,1\right)$, and write $S_{d,e}$ for the summand of polynomials of bidegree $\left(d,e\right)$, which correspond to the global sections of $\caO_X\left(d,e\right)$. Also, for any bihomogeneous ideal $I\subseteq S$ we denote by $I_{d,e}=I\cap S_{d,e}$, the summand of elements of bidegree $\left(d,e\right)$.

Let now $L=\caO_X\left(d,e\right)$ and choose an irreducible non-zero section $F\in H^0\left(X,L\right)=S_{d,e}$. Its zero locus $Z:=\left\{F=0\right\}\subseteq\mP^1\times\mP^1$ is a smooth curve if and only if the four partial derivatives of $F$ have no common zero, i.e. $\left\{\frac{\partial F}{\partial x_0}=\frac{\partial F}{\partial x_1}=\frac{\partial F}{\partial y_0}=\frac{\partial F}{\partial y_1}=0\right\}=\emptyset\subseteq\mP^1\times\mP^1$.

To $L$ we can attach the bundle of first-order differential operators $\Sigma_L$, and then $F\in H^0\left(L\right)$ induces the surjective morphism $dF\colon\Sigma_L\to L$. For any $M=\caO_X\left(a,b\right)$ we can consider the corresponding generalized Jacobian ideal
$$J_{a,b}:=J_M=\im\left(H^0\left(\Sigma_L\otimes L^{\vee}\otimes M\right)=H^0\left(\Sigma_L\left(a-d,b-e\right)\right)\longra H^0\left(M\right)\right)\subseteq H^0\left(M\right)=S_{a,b}.$$

\begin{lemma} \label{lem:J-P1-P1}
The generalized Jacobian ideal coincides with the ``usual'' Jacobian ideal generated by the partial derivatives of $F$, i.e.
\begin{align}
J_{a,b}&=\left(\frac{\partial F}{\partial x_0},\frac{\partial F}{\partial x_1},\frac{\partial F}{\partial y_0},\frac{\partial F}{\partial y_1}\right)_{a,b}\\
&=\left\{P_0\frac{\partial F}{\partial x_0}+P_1\frac{\partial F}{\partial x_1}+Q_0\frac{\partial F}{\partial y_0}+Q_1\frac{\partial F}{\partial y_1}\,\mid\,P_0,P_1\in S_{a-d+1,b-e},Q_0,Q_1\in S_{a-d,b-e+1}\right\}.
\end{align}
\end{lemma}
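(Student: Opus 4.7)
The plan is to construct an explicit sheaf-level short exact sequence for $\Sigma_L$ on $X=\mP^1\times\mP^1$ and then use cohomology to compare $J_{a,b}$ with the naive Jacobian $J'_{a,b}$ from the statement. First I define a morphism
$$\phi\colon\caO_X(1,0)^{\oplus 2}\oplus\caO_X(0,1)^{\oplus 2}\longra\Sigma_L$$
sending a local tuple $(P_0,P_1,Q_0,Q_1)$ to the first-order operator $P_0\partial/\partial x_0+P_1\partial/\partial x_1+Q_0\partial/\partial y_0+Q_1\partial/\partial y_1$; each term preserves bidegree $(d,e)$ when acting on sections of $L$, so this is indeed a section of $\Sigma_L$. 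A computation in the affine chart $u=x_1/x_0$, $v=y_1/y_0$ (with $L$ trivialised by $x_0^dy_0^e$) yields $x_0\partial/\partial x_1=\partial_u$ and $y_0\partial/\partial y_1=\partial_v$, while the Euler relations $\sum_ix_i\partial/\partial x_i=d$ and $\sum_jy_j\partial/\partial y_j=e$ place the local identity of $\Sigma_L$ in the image; hence $\phi$ is surjective onto the local basis $(1,\partial_u,\partial_v)$.

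Combining the two Euler relations (legitimate because $d,e\geq 1$ by ampleness) to eliminate the identity gives a global kernel element $(ex_0,ex_1,-dy_0,-dy_1)\in\caO_X(1,0)^{\oplus 2}\oplus\caO_X(0,1)^{\oplus 2}$. The sheaf $\ker\phi$ is a line bundle with $\det(\ker\phi)=\caO_X$ (by a determinant count using $\det\Sigma_L=-K_X=\caO_X(2,2)$), so $\ker\phi\cong\caO_X$; since the displayed section is nowhere vanishing on $\mP^1\times\mP^1$, it gives an isomorphism $\caO_X\cong\ker\phi$, yielding
$$0\longra\caO_X\longra\caO_X(1,0)^{\oplus 2}\oplus\caO_X(0,1)^{\oplus 2}\stackrel{\phi}{\longra}\Sigma_L\longra 0.\qquad(\ast)$$
By construction, $dF\circ\phi$ is precisely the naive Jacobian map $(P_0,P_1,Q_0,Q_1)\mapsto\sum_iP_i\partial F/\partial x_i+\sum_jQ_j\partial F/\partial y_j$.

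Tensoring $(\ast)$ with $\caO_X(a-d,b-e)$ and taking the long exact cohomology sequence, the natural map
$$H^0(\caO_X(a-d+1,b-e))^{\oplus 2}\oplus H^0(\caO_X(a-d,b-e+1))^{\oplus 2}\longra H^0(\Sigma_L(a-d,b-e))$$
is surjective whenever $H^1(\caO_X(a-d,b-e))=0$. Composing with $dF$ then identifies $J_{a,b}=\im H^0(dF)$ with $\im H^0(dF\circ\phi)=J'_{a,b}$, which is the statement.

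The main obstacle is the residual case $H^1(\caO_X(a-d,b-e))\neq 0$, which on $\mP^1\times\mP^1$ occurs exactly when one coordinate of $(a-d,b-e)$ is $\leq -2$ and the other is $\geq 0$. In this ``bad region'' a direct degree count shows the right-hand side $J'_{a,b}$ of the lemma vanishes (all the spaces $H^0(\caO_X(a-d+1,b-e))$, $H^0(\caO_X(a-d,b-e+1))$, and $H^0(\caO_X(a-d,b-e))$ are zero), so it only remains to prove $J_{a,b}=0$. For this I would use the structural sequence $0\to\caO_X(a-d,b-e)\to\Sigma_L(a-d,b-e)\to T_X(a-d,b-e)\to 0$ together with $T_X=\caO_X(2,0)\oplus\caO_X(0,2)$: inspecting the summands shows that $H^0(T_X(a-d,b-e))$ can only survive in a boundary case (say $a=d-2$ with $b\geq e$, and symmetrically). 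There the connecting map $\delta=\cup c_1(L)$ reduces, via the explicit contraction $T_X\otimes\Omega_X^1\to\caO_X$, to a nonzero multiple of the Euler connecting on one $\mP^1$-factor, which is an isomorphism because $d\geq 1$. Hence $H^0(\Sigma_L(a-d,b-e))=\ker\delta=0$ and $J_{a,b}=0=J'_{a,b}$ throughout the bad region.
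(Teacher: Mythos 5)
Your proof is correct, but it takes a genuinely different route from the paper's. The paper proceeds by brute force: it trivializes $\Sigma_L$ over the four standard affine charts, works out the transition relations between the local frames $\left\{1,D_z,D_w\right\}$, writes down an explicit basis of $H^0\left(\Sigma_L\left(a,b\right)\right)$ for every bidegree, and applies $dF$ to each basis element, observing that every image is a monomial multiple of a partial derivative (using the Euler relations). You instead package the same local computation into a global Euler-type presentation
\begin{equation}
0\longra\caO_X\longra\caO_X\left(1,0\right)^{\oplus 2}\oplus\caO_X\left(0,1\right)^{\oplus 2}\stackrel{\phi}{\longra}\Sigma_L\longra 0,
\end{equation}
the exact analogue of $\Sigma_{\caO_{\mP^2}\left(n\right)}\cong\caO_{\mP^2}\left(1\right)^{\oplus 3}$ from the paper's own $\mP^2$ example, and reduce the lemma to the vanishing of $H^1\left(\caO_X\left(a-d,b-e\right)\right)$; your identification of the kernel as the combined Euler relation $\left(ex_0,ex_1,-dy_0,-dy_1\right)$ and the determinant count are both correct, as is the observation that $J'_{a,b}\subseteq J_{a,b}$ holds unconditionally so only surjectivity of $H^0\left(\phi\right)$ is at stake. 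Your treatment of the bad region is also sound: there the naive ideal is trivially zero, and $H^0\left(\Sigma_L\left(a-d,b-e\right)\right)=0$ because the only surviving summand of $H^0\left(T_X\left(a-d,b-e\right)\right)$ occurs at $a-d=-2$ (or symmetrically) and is killed by the connecting map, which is cup product with the $\caO_X\left(-2,0\right)$-component of $c_1\left(L\right)$ and hence an isomorphism for $d\geq 1$ --- a fact the paper itself records as a consistency check after its basis computation. What each approach buys: the paper's computation yields explicit generators of $H^0\left(\Sigma_L\left(a,b\right)\right)$ that are reused later (e.g.\ the bound $\dim J_{d,e}\leq h^0\left(\Sigma_L\right)=7$ in Corollary \ref{cor:upper-bound-dim-D-P1-P1}), though your sequence recovers the same dimension counts from the cohomology of line bundles; your argument is shorter, less error-prone, and generalizes immediately to any product of projective spaces (or toric variety) where $\Sigma_L$ admits such a presentation by the Cox-ring generators.
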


In particular, the pieces of small degree are $J_{d-1,e-1}=0$,
\begin{equation} \label{eq:J-a-b-small}
J_{d-1,e}=\left\{a_0\frac{\partial F}{\partial x_0}+a_1\frac{\partial F}{\partial x_1}\,\mid\,a_0,a_1\in\mC\right\}, J_{d,e-1}=\left\{b_0\frac{\partial F}{\partial y_0}+b_1\frac{\partial F}{\partial y_1}\,\mid\,b_0,b_1\in\mC\right\},
\end{equation}
and $J_{d,e}$ is generated as a vector space by
\begin{equation} \label{eq:generators-J-d-e}
\left\{x_0\frac{\partial F}{\partial x_0},x_1\frac{\partial F}{\partial x_0},x_0\frac{\partial F}{\partial x_1},x_1\frac{\partial F}{\partial x_1},y_0\frac{\partial F}{\partial y_0},y_1\frac{\partial F}{\partial y_0},y_0\frac{\partial F}{\partial y_1},y_1\frac{\partial F}{\partial y_1}\right\}.
\end{equation}
These generators are however not linearly independent, since the Euler relations with respect to each subset of variables give
\begin{equation} \label{eq:Euler-P1xP1}
    \frac{1}{d}\left(x_0\frac{\partial F}{\partial x_0}+x_1\frac{\partial F}{\partial x_1}\right)=F=\frac{1}{e}\left(y_0\frac{\partial F}{\partial y_0}+y_1\frac{\partial F}{\partial y_1}\right).
\end{equation}

\begin{proof}[Proof of Lemma \ref{lem:J-P1-P1}]
We first describe a trivialization of $\Sigma_L$. For $i,j=0,1$ consider the affine open subset
\begin{equation} \label{eq:proof:lem:J-P1-P1-1}
\begin{array}{rcl}
U_{ij}:=\left\{\left(\left[x_0:x_1\right],\left[y_0:y_1\right]\right)\,\mid\,x_iy_j\neq 0\right\} & \stackrel{\cong}{\longlra} & \mC^2 \\
\left(\left[x_0:x_1\right],\left[y_0:y_1\right]\right) & \mapsto & \left(z_{1-i},w_{1-j}\right):=\left(\frac{x_{1-i}}{x_i},\frac{y_{1-j}}{y_j}\right)
\end{array}
\end{equation}
On $U_{ij}$ the line bundle $L$ is trivialized by the section $e_{ij}:=x_i^dy_j^e$, and any $F\left(x_0,x_1,y_0,y_1\right)\in H^0\left(L\right)$ can be uniquely written as $F=f_{ij}\left(z_{1-i},w_{1-j}\right)x_i^dy_j^e$ for a polynomial $f_{ij}\in\mC\left[z_{1-i},w_{1-j}\right]$. We can obtain $f_{ij}$ explicitly, by substituting $x_i=y_j=1$, $x_{1-i}=z_{1-i}$ and $y_{1-j}=w_{1-j}$.

Moreover, $\Sigma_L$ is trivialized by $\left\{1,D_z^{ij},D_w^{ij}\right\}$, where $D_z^{ij}\mapsto\frac{\partial}{\partial z_{1-i}}$ and $D_w^{ij}\mapsto\frac{\partial}{\partial w_{1-j}}$ under the natural projection $\Sigma_L\to T_{\mP^1\times\mP^1}$. And the morphism $dF\colon\Sigma_L\to L$ is given by $dF\left(1\right)=f_{ij}e_{ij}=F$,
\begin{equation} \label{eq:proof:lem:J-P1-P1-2}
    dF\left(D_z^{ij}\right)=\frac{\partial f_{ij}}{\partial z_{1-i}}e_{ij}=\frac{\partial F}{\partial x_{1-i}}\left(x_i=1,y_j=1\right)x_i^ey_j^d=x_i\frac{\partial F}{\partial x_{1-i}}
\end{equation}
and
\begin{equation} \label{eq:proof:lem:J-P1-P1-3}
    dF\left(D_w^{ij}\right)=\frac{\partial f_{ij}}{\partial w_{1-j}}e_{ij}=\frac{\partial F}{\partial y_{1-j}}\left(x_i=1,y_j=1\right)x_i^ey_j^d=y_j\frac{\partial F}{\partial y_{1-j}}.
\end{equation}
Since the result of \eqref{eq:proof:lem:J-P1-P1-2} is independent of $j$, we conclude that $D_z^{i0}=D_z^{i1}$ on  $U_{i0}\cap U_{i1}$, and analogously $D_w^{0j}=D_w^{1j}$ on $U_{0j}\cap U_{1j}$. We just denote then by $D_z^i$ resp. $D_w^j$ the local section of $\Sigma_L$ given by $D_z^{ij}$ on $U_{ij}$ resp. $D_w^{ij}$ on $U_{ij}$.

In order to compare $D_z^0$ and $D_z^1$ on $U_{0j}\cap U_{1j}$, we compare their images under $dF$ for an arbitrary $F\in H^0\left(L\right)$, which are respectively $x_0\frac{\partial F}{\partial x_1}$ and $x_1\frac{\partial F}{\partial x_0}$. Using the Euler relation with respect to the variables $x_0,x_1$ we obtain
\begin{equation} \label{eq:proof:lem:J-P1-P1-4}
    dF\left(x_1^2D_z^0+x_0^2D_z^1\right)=x_1^2x_0\frac{\partial F}{\partial x_1}+x_0^2x_1\frac{\partial F}{\partial x_0}=x_0x_1\left(x_0\frac{\partial F}{\partial x_0}+x_1\frac{\partial F}{\partial x_1}\right)=d\cdot x_0x_1 F =dF\left(d\cdot x_0x_1\cdot 1\right).
\end{equation}
Since this should hold for any $F\in H^0\left(L\right)$, it must hold
\begin{equation} \label{eq:proof:lem:J-P1-P1-5-1}
    x_1^2D_z^0+x_0^2D_z^1=d\cdot x_0x_1\cdot 1,\quad\text{i.e.}\quad D_z^1=d\cdot\frac{x_1}{x_0}\cdot1-\left(\frac{x_1}{x_0}\right)^2D_z^0=d\cdot z_1\cdot 1-z_1^2D_z^0.
\end{equation}
Analogously, we can prove
\begin{equation} \label{eq:proof:lem:J-P1-P1-5-3}
    D_w^1=d\cdot w_1\cdot 1-w_1^2D_w^0.
\end{equation}

We are now ready to compute a basis of the global sections of $\Sigma_L\left(a,b\right)$ for any $a,b\in\mZ$.

Note first that the short exact sequence
\begin{equation} \label{eq:proof:lem:J-P1-P1-5-5}
    0 \longra \caO_X\left(a,b\right) \longra \Sigma_L\left(a,b\right)\longra T_X\left(a,b\right)=\caO_X\left(a+2,b\right)\oplus \caO_X\left(a,b+2\right)\longra 0
\end{equation}
gives $h^0\left(\Sigma_L\left(a,b\right)\right)=0$ unless $a,b\geq -2$. Thus from now on we restrict ourselves to this case.

On $U_{ij}$, $\Sigma_L\left(a,b\right)$ is trivialized by $1\cdot x_i^ay_j^b, D_z^i\otimes x_i^1y_j^b$ and $D_w^j\otimes x_i^ay_j^b$, and the gluing relations are given by \eqref{eq:proof:lem:J-P1-P1-5-1} and \eqref{eq:proof:lem:J-P1-P1-5-3}.

On $U_{ij}$ a global section of $\Sigma_L\left(a,b\right)$ is thus described as
\begin{equation} \label{eq:proof:lem:J-P1-P1-6}
    \left(A_{ij}\cdot 1+B_{ij}D_z^i+C_{ij}D_w^j\right)\otimes x_i^ay_j^b
\end{equation}
for certain polynomials $A_{ij},B_{ij},C_{ij}\in\mC\left[z_{1-j},w_{1-j}\right]$. We will now use the relations \eqref{eq:proof:lem:J-P1-P1-5-1} and \eqref{eq:proof:lem:J-P1-P1-5-3} to find relations between these polynomials, as well as upper bounds on their (bi)degrees.

Using \eqref{eq:proof:lem:J-P1-P1-4} and $z_0=\frac{x_0}{x_1}=\frac{1}{z_1}$ we obtain
\begin{align}
    A_{0j}\left(z_1,w_{1-j}\right)&=\left[A_{1j}\left(\frac{1}{z_1},w_{1-j}\right)+d\cdot z_1B_{1j}\left(\frac{1}{z_1},w_{1-j}\right)\right]z_1^a \label{eq:proof:lem:J-P1-P1-7}
    \\
    B_{0j}\left(z_1,w_{1-j}\right)&=-B_{1j}\left(\frac{1}{z_1},w_{1-j}\right)z_1^{a+2} \label{eq:proof:lem:J-P1-P1-8}
    \\
    C_{0j}\left(z_1,w_{1-j}\right)&=C_{1j}\left(\frac{1}{z_1},w_{1-j}\right)z_1^a. \label{eq:proof:lem:J-P1-P1-9}
\end{align}
From \eqref{eq:proof:lem:J-P1-P1-8} and \eqref{eq:proof:lem:J-P1-P1-9} it follows $\deg_{z_0}B_{1j}\leq a+2$ and $\deg_{z_0}C_{1j}\leq a$.

By symmetry, it must also hold
\begin{align}
    A_{i0}\left(z_{1-i},w_1\right)&=\left[A_{i1}\left(z_{1-i},\frac{1}{w_1}\right)+e\cdot w_1C_{i1}\left(z_{1-i},\frac{1}{w_1}\right)\right]w_1^b
    \label{eq:proof:lem:J-P1-P1-10}
    \\
    B_{i0}\left(z_{1-i},\frac{1}{w_1}\right)&=-B_{i1}\left(z_{1-i},\frac{1}{w_1}\right)w_1^{b+2}
    \label{eq:proof:lem:J-P1-P1-11}
    \\
    C_{i0}\left(z_{1-i},\frac{1}{w_1}\right)&=C_{i1}\left(z_{1-i},\frac{1}{w_1}\right)w_1^b,
    \label{eq:proof:lem:J-P1-P1-12}
\end{align}
hence, in particular, $\deg_{w_0}B_{1j}\leq b$ and $\deg_{w_0}C_{1j}\leq b+2$.

We can try with monomials $B_{11}=z_0^rw_0^s$ and $C_{11}=z_0^nw_0^m$, where $0\leq r\leq a+2$, $0\leq s\leq b$, $0\leq n\leq a$ and $0\leq m\leq b+2$, and check the possibilities for $A_{11}$.

From \eqref{eq:proof:lem:J-P1-P1-7} it follows that $A_{11}\left(\frac{1}{z_1},w_0\right)z_1^a+d\cdot z_1^{a+1-r}w_0^s$ must be a polynomial on $z_1$ and $w_0$.
\begin{itemize}
    \item If $r\leq a+1$ it is enough that $\deg_{z_0}A_{11}\leq a$, thus we can take any monomial $A_{11}\left(z_0,w_0\right)=z_0^tw_0^u$ with $0\leq t\leq a$.
    \item If $r=a+2$, $A_{11}$ must contain the monomial $-d\cdot z_0^{a+1}w_0^s$, in order to cancel the term $d\cdot z_1^{-1}w_0^s$ (and also any monomial as in the case $r\leq a+1$).
\end{itemize}
By symmetry:
\begin{itemize}
    \item If $m\leq b+1$, $A_{11}$ can contain any monomial $z_0^tw_0^u$ with $0\leq u\leq b$,
    \item If $m=\leq b+2$, then $A_{11}$ must contain $-e\cdot z_0^nw_0^{b+2}$.
\end{itemize}
Altogether, we have the following basis for $H^0\left(\Sigma_L\left(a,b\right)\right)$ (we give their local expressions on $U_{11}$)
\begin{enumerate}
    \item $z_0^tw_0^u\cdot 1\otimes x_1^ay_1^b$ with $0\leq t\leq a$ and $0\leq u\leq b$, \label{item:basis-1}
    \item $z_0^rw_0^s\cdot D_z^1\otimes x_1^ay_1^b$ with $0\leq r\leq a+1$ and $0\leq s\leq b$, \label{item:basis-2}
    \item $\left(-d\cdot z_0^{a+1}w_0^s\cdot 1+z_0^{a+2}w_0^s\cdot D_z^1\right)\otimes x_1^ay_1^b$ with $0\leq s\leq b$ \label{item:basis-3}
    \item $z_0^nw_0^m\cdot D_w^1\otimes x_1^ay_1^b$ with $0\leq n\leq a$ and $0\leq m\leq b+1$, and \label{item:basis-4}
    \item $\left(-e\cdot z_0^nw_0^{b+1}\cdot 1+z_0^nw_0^{b+2}\cdot D_w^1\right)\otimes x_1^ay_1^b$ with $0\leq n\leq a$. \label{item:basis-5}
\end{enumerate}

Note that if $a=-2$ or $b=-2$ there are no elements in the above list, hence also $H^0\left(\Sigma_L\left(a,b\right)\right)=0$. This is compatible with the short exact sequence \eqref{eq:proof:lem:J-P1-P1-5-5}, since in this case we have $H^1\left(\caO_X\left(a,b\right)\right)\neq 0$, and the connecting homomorphism turns out to be an isomorphism.

In order to finish the proof, we just need to compute a set of generators of
\begin{equation} \label{eq:proof:lem:J-P1-P1-13}
J_{a,b}=\im\left(dF\colon H^0\left(\Sigma_L\left(a-d,b-e\right)\right)\to H^0\left(\caO_X\left(a,b\right)\right)\right)
\end{equation}
by applying the differential  $dF$ to the elements of the above basis (replacing $a$ by $a-d$ and $b$ by $b-d$). Since $H^0\left(\Sigma_L\left(a-d,b-e\right)\right)=0$ unless $a-d,b-e\geq -1$, we can assume $a\geq d-1$ and $b\geq e-1$. Recall that $dF\left(1\right)=F$, $dF\left(D_z^1\right)=x_1\frac{\partial F}{\partial x_0}$ and $dF\left(D_w^1\right)=y_1\frac{\partial F}{\partial y_0}$, and thus
\begin{enumerate}
    \item $dF\left(z_0^tw_0^u\cdot 1\otimes x_1^{a-d}y_1^{b-e}\right)=\frac{x_0^t}{x_1^t}\frac{y_0^u}{y_1^u} x_1^{a-d}y_1^{b-e} F=x_0^tx_1^{a-d-t}y_0^uy_1^{b-e-u}F$ with $0\leq t\leq a-d$ and $0\leq u\leq b-e$, i.e. we obtain all possible multiples of $F$ with bidegree $\left(a,b\right)$; \label{item:im-basis-1}
    \item $dF\left(z_0^rw_0^s\cdot D_z^1\otimes x_1^{a-d}y_1^{b-e}\right)=x_0^rx_1^{a-d-r+1}y_0^sy_1^{b-e-s}\frac{\partial F}{\partial x_0}$ with $0\leq r\leq a-d+1$ and $0\leq s\leq b-e$, i.e. all multiples of $\frac{\partial F}{\partial x_0}$ of bidegree $\left(a,b\right)$; \label{item:im-basis-2}
    \item $dF\left(\left(-d\cdot z_0^{a-d+1}w_0^s\cdot 1+z_0^{a-d+2}w_0^s\cdot D_z^1\right)\otimes x_1^{a-d}y_1^{b-e}\right)=$ \label{item:im-basis-3}

    $-d\cdot x_0^{a-d+1}x_1^{-1}y_0^sy_1^{b-e-s}F+x_0^{a-d+2}x_1^{-2}y_0^sy_1^{b-e-s}x_1\frac{\partial F}{\partial x_0}=$

    $x_0^{a-d+1}x_1^{-1}y_0^sy_1^{b-e-s}\left[-d\cdot F+x_0\frac{\partial F}{\partial x_0}\right]=x_0^{a-d+1}x_1^{-1}y_0^sy_1^{b-e-s}\left[-x_1\frac{\partial F}{\partial x_1}\right]=-x_0^{a-d+1}y_0^sy_1^{b-e-s}\frac{\partial F}{\partial x_1}$ with $0\leq s\leq b-e$, i.e. all multiples of $x_0^{a-d+1}\frac{\partial F}{\partial x_1}$ of bidegree $\left(a,b\right)$. The remaining multiples of $\frac{\partial F}{\partial x_1}$ of the same bidegree, admitting powers of $x_1$, can be obtained from \eqref{item:im-basis-1} and \eqref{item:im-basis-2} using the Euler relation with respect to $x_0,x_1$.

    \item $dF\left(z_0^nw_0^m\cdot D_w^1\otimes x_1^{a-d}y_1^{b-e}\right)=x_0^nx_1^{a-d-n}y_0^my_1^{b-e-m+1}\frac{\partial F}{\partial y_0}$ with $0\leq n\leq a-d$ and $0\leq m\leq b-e+1$, and \label{item:im-basis-4}

    \item $dF\left(\left(-e\cdot z_0^nw_0^{b-e+1}\cdot 1+z_0^nw_0^{b-e+2}\cdot D_w^1\right)\otimes x_1^{a-d}y_1^{b-e}\right)=-x_0^nx_1^{a-d-n}y_1^{b-e+1}\frac{\partial F}{\partial y_1}$ with $0\leq n\leq a-d$. \label{item:im-basis-5}
\end{enumerate}
Thus $J_{a,b}$ contains all products of the partial derivatives of $F$ and monomials (of the right bidegree), which proves the claim.
\end{proof}

In order to find a contradiction with Theorem \ref{thm:lower-bound-dim-D}, we need to understand the geometry of the linear system on $Z$ induced by $J_{d,e}$. More strictly speaking, since $F\in J_{d,e}$, the restriction map $J_{d,e}\subseteq H^0\left(L\right)\to H^0\left(L_{\mid Z}\right)=H^0\left(\caO_Z\left(d,e\right)\right)$ is not injective, but its image induces a linear subsystem of $\left|\caO_Z\left(d,e\right)\right|$ that we will still denote $\left|J_{d,e\mid Z}\right|$. It will be helpful to consider also the restrictions of $J_{d-1,e}$ and $J_{d,e-1}$, which by degree reasons inject in $H^0\left(\caO_Z\left(d-1,e\right)\right)$ and $H^0\left(\caO_Z\left(d,e-1\right)\right)$ respectively.

\begin{theorem} \label{thm:J-d-e}
Suppose $d,e\geq 2$. Then:
\begin{enumerate}
    \item The base divisor of $\left|J_{d-1,e\mid Z}\right|$ coincides with the ramification divisor of the second projection $Z\hra X=\mP^1\times\mP^1\stackrel{\pi_2}{\to}\mP^1$, and after removing the base points it induces the projection of $Z$ onto the first factor.
    \item The restricted linear system $\left|J_{d,e\mid Z}\right|$ is base point free and the induced morphism $Z\to\left|J_{d,e\mid Z}\right|^{\vee}$ is birational into its image.
\end{enumerate}
\end{theorem}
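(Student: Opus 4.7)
Both parts hinge on the Euler relations $x_0 F_{x_0}+x_1 F_{x_1}=dF$ and $y_0 F_{y_0}+y_1 F_{y_1}=eF$, which on $Z$ (where $F$ vanishes) become linear relations among the generators of $J_{d-1,e\mid Z}$ and $J_{d,e\mid Z}$ from Lemma \ref{lem:J-P1-P1}. For part (1), the first relation immediately gives $F_{x_0}|_Z = -\frac{x_1}{x_0}F_{x_1}|_Z$, so the rational map $[F_{x_0}:F_{x_1}]\colon Z\dashrightarrow\mP^1$ defined by the two-dimensional system $J_{d-1,e\mid Z}$ coincides with the first projection $\pi_1$ up to a linear automorphism of the target. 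To identify the base divisor with the ramification divisor $R$ of $\pi_2$, I would work in the chart $U_{00}=\{x_0y_0\neq 0\}$ with affine coordinates $x=x_1/x_0$, $y=y_1/y_0$ and trivialization $x_0^{d-1}y_0^e$ of $\caO_X(d-1,e)$: direct differentiation (as in the proof of Lemma \ref{lem:J-P1-P1}) shows that $F_{x_0}$ and $F_{x_1}$ correspond to $df-xf_x$ and $f_x$, where $f(x,y)=F(1,x,1,y)$. On $Z=\{f=0\}$ these reduce to $-xf_x$ and $f_x$, and inspection shows the common zero divisor is $\ddiv(f_x|_Z)$. Implicit differentiation along $Z$ gives $\pi_2|_Z$ locally as $y=h(x)$ with $h'(x)=-f_x|_Z/f_y|_Z$ and $f_y|_Z$ non-vanishing at ramification points by smoothness, so $R$ is also locally cut out by $f_x|_Z$ with identical multiplicities. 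Covering $Z$ by the four analogous standard charts establishes $B=R$ globally; Riemann--Hurwitz gives $\deg R = 2e(d-1) = \deg \caO_Z(d-2,e)$ as a consistency check.

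For part (2), base-point-freeness follows from smoothness of $Z$: at any $P\in Z$ at least one partial derivative is nonzero, say $F_{x_1}(P)\neq 0$, and since $x_0,x_1$ cannot simultaneously vanish, one of $x_0 F_{x_1}$ or $x_1 F_{x_1}$ is nonzero at $P$. For birationality of the induced morphism $\phi\colon Z\to\mP^r$, I would exploit the two sub-linear-systems $\langle x_0 F_{x_0},x_1 F_{x_0}\rangle|_Z$ and $\langle y_0 F_{y_0},y_1 F_{y_0}\rangle|_Z$ of $J_{d,e\mid Z}$. After dividing out the common factors $F_{x_0}|_Z$ and $F_{y_0}|_Z$, these sub-systems yield the projections $\pi_1$ and $\pi_2$ respectively. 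Consequently, the composition of $\phi$ with a suitable rational projection $\mP^r\dashrightarrow\mP^1\times\mP^1$ (pairing the first two coordinates of the $x$-block with the first two of the $y$-block) recovers the original inclusion $Z\hra\mP^1\times\mP^1$, which is tautologically birational onto its image; hence so is $\phi$. In the degenerate case where $F_{x_0}|_Z$ vanishes identically---which would force $F$ to be independent of $x_0$ and contradict $d\geq 1$ together with the smoothness and irreducibility of $Z$---one simply uses $F_{x_1}$ instead by symmetry.

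The main obstacle is the multiplicity agreement $B=R$ in part (1): verifying that the vanishing order of $f_x|_Z$ at each ramification point of $\pi_2$ equals $e_P-1$. This rests on a careful application of the implicit function theorem along $Z$ and must be checked uniformly across all four standard affine charts of $\mP^1\times\mP^1$. Once this local identification is clean, both statements reduce to organized bookkeeping with the Euler relations and the explicit generators of $J_{d,e}$ furnished by Lemma \ref{lem:J-P1-P1}.
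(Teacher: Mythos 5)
Your proof is correct and follows essentially the same route as the paper's: part (1) via the Euler relation and the implicit-function-theorem computation identifying the vanishing order of $f_x|_Z$ with the ramification order of $\pi_2$, and part (2) by showing the two projections $\pi_1,\pi_2$, and hence the original embedding, factor through the morphism induced by $\left|J_{d,e\mid Z}\right|$. Your only (minor, and welcome) deviation is in the birationality step, where you read off $\pi_1$ and $\pi_2$ directly from the pencils $\langle x_0F_{x_0},x_1F_{x_0}\rangle$ and $\langle y_0F_{y_0},y_1F_{y_0}\rangle$ after cancelling the fixed factors, whereas the paper projects onto the full block $\langle x_iF_{x_j}\rangle$, factors through the Segre embedding and invokes part (1); your version makes part (2) logically independent of part (1).
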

\begin{proof}
    \begin{enumerate}
        \item Since $\left\{\frac{\partial F}{\partial x_0},\frac{\partial F}{\partial x_1}\right\}$ is a basis of $J_{d-1,e}$, the base locus of $\left|J_{d-1,e\mid Z}\right|$ consists of the points $p\in X=\mP^1\times\mP^1$ such that
        \begin{equation} \label{eq:proof:thm:J-d-e-1}
            \frac{\partial F}{\partial x_0}\left(p\right)=\frac{\partial F}{\partial x_1}\left(p\right)=F\left(p\right)=0,
        \end{equation}
        where the third condition $F\left(p\right)=0$ actually follows from the other two because of the Euler identity with respect to $x_0,x_1$.

        Suppose without loss of generality that $p=\left(\left[1:0\right],\left[1:0\right]\right)$ is a base point. Taking local affine coordinates $\left(z,w\right)=\left(\frac{x_1}{x_0},\frac{y_1}{y_0}\right)$, $p$ corresponds to the origin $\left(0,0\right)$, and the curve $Z$ is locally defined by the equation $f\left(z,w\right):=F\left(1,z,1,w\right)=0$. Being a base point of $J_{d-1,e}$ implies in particular that $\frac{\partial f}{\partial z}\left(p\right)=\frac{\partial F}{\partial x_1}\left(p\right)=0$. Since $Z$ is smooth, it must hold $\frac{\partial f}{\partial w}\left(p\right)\neq 0$, and thus the tangent line to $Z$ at $p$ has equation
        $$0=\frac{\partial f}{\partial z}\left(p\right)z+\frac{\partial f}{\partial w}\left(p\right)w=\frac{\partial f}{\partial w}\left(p\right)w,$$
        or equivalently $w=0$, which shows that $p$ is a ramification point of the projection onto the second factor (locally given by $\left(z,w\right)\mapsto w$).

        One can even check that the multiplicity as a base point agrees with the multiplicity in the ramification divisor. Indeed, note first that on $Z$ we have $x_0\frac{\partial F}{\partial x_0}+x_1\frac{\partial F}{\partial x_1}\equiv 0$, i.e.
        \begin{equation} \label{eq:proof:thm:J-d-e-2}
            \frac{\partial F}{\partial x_0}_{\mid Z}=-\frac{x_1}{x_0}\frac{\partial F}{\partial x_1}_{\mid Z}=-z\frac{\partial F}{\partial x_1}_{\mid Z}.
        \end{equation}
        Hence, the multiplicity of $p$ as a base point of $\left|J_{d-1,e\mid Z}\right|$ coincides with the vanishing order of $\frac{\partial F}{\partial x_1}_{\mid Z}$ at $p$. Using $z$ as a local coordinate for $Z$ around $p$ and expressing $w_{\mid Z}$ as a function $w\left(z\right)$, we can write (locally around $p$)
        \begin{equation} \label{eq:proof:thm:J-d-e-3}
            \frac{\partial F}{\partial x_1}_{\mid Z}=\frac{\partial F}{\partial x_1}\left(1,z,1,w\left(z\right)\right)=\frac{\partial f}{\partial z}\left(z,w\left(z\right)\right).
        \end{equation}
        From $f\left(z,w\left(z\right)\right)\equiv 0$ it follows
        \begin{equation} \label{eq:proof:thm:J-d-e-4-1}
            \frac{\partial f}{\partial z}\left(z,w\left(z\right)\right)=-\frac{\partial f}{\partial w}\left(z,w\left(z\right)\right)w'\left(z\right).
        \end{equation}
        Since $\frac{\partial f}{\partial w}\left(p\right)\neq 0$, we have
        \begin{equation} \label{eq:proof:thm:J-d-e-4-5}
            \ord_{z=0}\frac{\partial f}{\partial z}\left(z,w\left(z\right)\right)=\ord_{z=0}w'\left(z\right).
        \end{equation}
        By the above discussion, the left hand side is the multiplicity of $p$ in the base locus of $\left|J_{d-1,e\mid Z}\right|$, while the right hand side is the order of $p$ in the ramification divisor of the projection onto the second coordinate, as claimed.

        The second assertion follows easily from the Euler relation with respect to $x_0,x_1$: outside the base locus, the morphism induced by $J_{d-1,e\mid Z}$ is given given for example by
        \begin{equation} \label{eq:proof:thm:J-d-e-5}
            p\mapsto \left[\frac{\partial F}{\partial x_1}\left(p\right):-\frac{\partial F}{\partial x_0}\left(p\right)\right]=\left[x_0\left(p\right):x_1\left(p\right)\right].
        \end{equation}
        \item The restriction $J_{d,e\mid Z}$ is generated by the restrictions of $x_i\frac{\partial F}{\partial x_j}$ and $y_i\frac{\partial F}{\partial y_j}$ (Lemma \ref{lem:J-P1-P1}).

        Suppose $p\in Z$ is a base point, so that $x_i\left(p\right)\frac{\partial F}{\partial x_j}\left(p\right)=y_i\left(p\right)\frac{\partial F}{\partial y_j}\left(p\right)=0$ for all $i,j$.

        It holds $x_i\left(p\right)\neq 0$ for at least one $i$, so that $\frac{\partial F}{\partial x_j}\left(p\right)=0$ for both $j=0,1$. This implies that $p$ is a ramification point for the second projection $Z\to\mP^1$. Considering the $y_i$'s one can analogously show that $p$ is also a ramification point for the first projection. This is only possible if $Z$ is not smooth at $p$, which gives a contradiction.

        It remains to show that $J_{d,e\mid Z}$ induces a birational morphism. Considering the above set of $8$ generators, we can describe the induced morphism $\varphi$ as
        $$p\mapsto\left[\ldots\colon a_{ij}\colon\ldots\colon b_{ij}\colon\ldots\right]:=\left[\ldots\colon x_i\left(p\right)\frac{\partial F}{\partial x_j}\left(p\right)\colon\ldots\colon y_i\left(p\right)\frac{\partial F}{\partial y_j}\left(p\right)\colon\ldots\right]\in\mP^7.$$
        Note that, since the generators are not linearly independent (because of the Euler relations \eqref{eq:Euler-P1xP1}), hence the image will be contained in the intersection of hyperplanes $$\left\{\frac{1}{d}\left(a_{00}+a_{11}\right)=\frac{1}{e}\left(b_{00}+b_{11}\right)=0\right\}.$$

        Let's consider now the linear subsystem generated by the $x_i\frac{\partial F}{\partial x_j}$, whose induced rational map $\psi_x\colon Z\dashrightarrow \mP^3$ coincides with the composition of $\varphi\colon Z\to\mP^7$ with  the projection onto the first coordinates $\left[a_{00}\colon\ldots\colon a_{11}\right]$. Furthermore $\psi_x$ can be factorized as
        $$\psi_x\colon Z\dashrightarrow\mP^1\times\mP^1\stackrel{\Phi}{\longra}
        \mP^3,$$
        where $\Phi$ just the Segre embedding, and the first map is given by
        $$p\mapsto\left(\left[x_0\left(p\right):x_1\left(p\right)\right],\left[\frac{\partial F}{\partial x_0}\left(p\right):\frac{\partial F}{\partial x_1}\left(p\right)\right]\right),$$
        i.e. the first map is the product of the first projection and the morphism induced by $J_{d-1,e\mid Z
        }.$ By the first part of the Theorem, the second factor coincides (up to an automorphism of $\mP^1$) with the first projection.

        Thus, with the appropriate choice of coordinates and denoting by $\Delta$ the diagonal map, we have
        $$\psi_x\colon Z\stackrel{\pi_1}{\longra} \mP^1\stackrel{\Delta}{\longra} \mP^1\times\mP^1\stackrel{\Phi}{\longra}\mP^3$$
        i.e. from the projection of $\varphi$ onto the first four coordinates we recover the first projection of $Z\hookrightarrow\mP^1\times\mP^1\to\mP^1$.

        Analogously, we can recover the second projection $\pi_2$ from the composition of $\varphi$ with the projection onto the last four coordinates $\left[b_{00}\colon\ldots\colon b_{11}\right]$.

        Altogether, we can recover the original embedding $Z\hookrightarrow\mP^1\times\mP^1$ from the morphism $\varphi$ induced by linear system $\left|J_{d,e\mid Z}\right|$, which implies that $\varphi$ must be birational.
    \end{enumerate}
\end{proof}

We also need to study up to what extent the duality maps $\lambda_{K_X^2\otimes L^2}$ and $\lambda_{K_X\otimes L^2}$ fail to be isomorphisms.

\begin{lemma} \label{lem:duality-P1xP1}
Let $L=\caO_X\left(d,e\right)$. Then the following hold:
\begin{enumerate}
    \item If $d,e\geq 3$, the duality map $\lambda_{K_X^2\otimes L^2}\colon R_{K_X^2\otimes L^2}=R_{2d-4,2e-4}\to R_L^*=R_{d,e}^*$ is an isomorphism. Thus in particular $\ker\lambda=J_{K_X^2\otimes L^2}=J_{2d-4,2e-4}$, where $\lambda$ is as in \eqref{eq:definition-lambda}.

    \item If $d=2$ and $e\geq 2$, then $\lambda_{K_X^2\otimes L^2}$ is injective.

    \item If $d,e\geq 2$, the duality map $\lambda_{K_X\otimes L^2}\colon R_{K_X\otimes L^2}=R_{2d-2,2e-2}\to R_{K_X\otimes L}=R_{d-2,e-2}^*$ is surjective and has one-dimensional kernel. More precisely, there is a short exact sequence
    $$0 \longra H^0\left(\caO_X\right)\stackrel{\cdot c_1\left(L\right)}{\longra} H^1\left(\Omega_X^1\right)\longra R_{K_X\otimes L^2} \stackrel{\lambda_{K_X\otimes L^2}}{\longra} R_{K_X\otimes L}^*\longra 0.$$
\end{enumerate}
\end{lemma}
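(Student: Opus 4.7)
The plan is to deduce each of the three statements from the general duality theory of Section~\ref{sect:Gen-Jac} (Propositions~\ref{prop:duality-K+L} and~\ref{prop:duality-L}), exploiting as the key simplifying feature of $X=\mP^1\times\mP^1$ the vanishing $H^1(\Sigma_L)=0$ for \emph{every} line bundle $L$. This follows from the extension $0\to\caO_X\to\Sigma_L\to T_X\to 0$ in~\eqref{eq:extension-Sigma_L} together with $H^1(\caO_X)=0$ and $H^1(T_X)=H^1(\caO_X(2,0))\oplus H^1(\caO_X(0,2))=0$. All other required cohomological vanishings reduce via K\"unneth to those on $\mP^1$.

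For part~(3), I would apply Proposition~\ref{prop:duality-K+L}. Its hypothesis $H^1(\Omega_X^1\otimes L)=H^1(\caO_X(d-2,e))\oplus H^1(\caO_X(d,e-2))=0$ holds for $d,e\geq 2$ by K\"unneth. The conclusion is then already the exact sequence in the statement, once one notes $H^0(\caO_X)=\mC$, $H^1(\Omega_X^1)\cong\mC^2$, and that $\cdot c_1(L)$ is injective because $c_1(L)\neq 0$.

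For part~(1), I would apply Proposition~\ref{prop:duality-L}. Both hypotheses are met for $d,e\geq 3$, since $K_X\otimes L=\caO_X(d-2,e-2)$ is ample and $H^1(\Omega_X^1\otimes K_X\otimes L)=0$ by K\"unneth. The exact sequence of the Proposition then collapses to an isomorphism $R_{K_X^2\otimes L^2}\stackrel{\cong}{\longra} R_L^*$ once the leftmost term $H^1(\Sigma_L)^*$ is killed via the key vanishing above. The identification $\ker\lambda=J_{K_X^2\otimes L^2}$ is immediate from~\eqref{eq:definition-lambda}, since $\lambda$ factors as $H^0(K_X^2\otimes L^2)\thra R_{K_X^2\otimes L^2}\stackrel{\lambda_{K_X^2\otimes L^2}}{\longra}R_L^*$, the first map having kernel $J_{K_X^2\otimes L^2}$ by definition and the second now being an isomorphism.

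Part~(2) is the only genuinely non-immediate case: for $d=2$, $K_X\otimes L=\caO_X(0,e-2)$ is merely nef and the hypothesis of Proposition~\ref{prop:duality-L} fails (indeed $H^1(\Sigma_L\otimes K_X^2\otimes L)\neq 0$ in general). Nevertheless the always-true injection $R_{K_X^2\otimes L^2}\hra H^1(\caE_\sigma\otimes K_X^2\otimes L)$ from~\eqref{eq:R_M-in-H1}, combined with Serre duality $H^1(\caE_\sigma\otimes K_X^2\otimes L)\cong H^1(\caE_\sigma)^*$ (via $\caE_\sigma^\vee\cong\caE_\sigma\otimes K_X\otimes L$, as in~\eqref{eq:duality-K+L-10}), gives $R_{K_X^2\otimes L^2}\hra H^1(\caE_\sigma)^*$. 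On the other side, the vanishing $H^1(\Sigma_L)=0$ upgrades the always-true inclusion $R_L\hra H^1(\caE_\sigma)$ to an isomorphism, so $H^1(\caE_\sigma)^*\cong R_L^*$, and composing yields an injection $R_{K_X^2\otimes L^2}\hra R_L^*$. The main (but essentially formal) obstacle I expect is to verify that this composition really is $\lambda_{K_X^2\otimes L^2}$: this follows by unwinding the trace definition~\eqref{eq:natural-trace-map}, since the Serre duality pairing $H^1(\caE_\sigma)\otimes H^1(\caE_\sigma\otimes K_X^2\otimes L)\to H^2(\det\caE_\sigma\otimes K_X^2\otimes L)=H^2(K_X)$ restricts on $R_L\otimes R_{K_X^2\otimes L^2}$ to $(\tau,\gamma)\mapsto(\tau\cdot\delta\sigma)\cdot(\gamma\cdot\delta\sigma)=\tau\gamma\cdot(\delta\sigma)^2=\tr_\sigma(\tau\gamma)$, which is precisely the pairing defining $\lambda_{K_X^2\otimes L^2}$.
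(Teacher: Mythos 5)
Your proposal is correct and follows essentially the same route as the paper: parts (1) and (3) are read off from Propositions \ref{prop:duality-L} and \ref{prop:duality-K+L} after checking the K\"unneth vanishings and $H^1\left(\Sigma_L\right)=0$, and part (2) is handled by identifying $R_{K_X^2\otimes L^2}$ with a subspace of $H^1\left(\caE_{\sigma}\otimes K_X^2\otimes L\right)\cong H^1\left(\caE_{\sigma}\right)^*\cong R_L^*$ exactly as in the paper. Your extra verification that the resulting inclusion really is $\lambda_{K_X^2\otimes L^2}$ (via compatibility of the Serre duality pairing with $\tr_{\sigma}$) is a point the paper passes over with ``it turns out that,'' so it is a welcome addition rather than a deviation.
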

\begin{proof}
    \begin{enumerate}
        \item The assertion follows from Proposition \ref{prop:duality-L} and $H^1\left(\Sigma_L\right)=0$, which in turn follows from the sequence $0\to\caO_X\to\Sigma_L\to T_X\to 0$.

        \item In this case, we cannot apply Proposition \ref{prop:duality-L} because $H^1\left(\Sigma_L\otimes K_X^2\otimes L\right)=H^1\left(\Sigma_L\left(-2,e-4\right)\right)\neq 0$. Nevertheless we can still try to follow that proof and obtain on the one hand
        \begin{equation} \label{eq:RK2L2-d2}
            R_{K_X^2\otimes L^2}\cong \ker\left(H^1\left(\caE_F\otimes K_X^2\otimes L\right)\to H^1\left(\Sigma_L\otimes K_X^2\otimes L\right)\right).
        \end{equation}
        On the other hand for these degrees it still holds $H^1\left(\Sigma_L\right)=0$, which gives
        \begin{equation} \label{eq:RL-d2}
            R_L\cong \ker\left(H^1\left(\caE_F\right)\to H^1\left(\Sigma_L\right)\right)=H^1\left(\caE_F\right),
        \end{equation}
        and thus $R_L^*\cong H^1\left(\caE_F^{\vee}\otimes K_X\right)=H^1\left(\caE_F\otimes K_X^2\otimes L\right)$. Since both isomorphisms in \eqref{eq:RK2L2-d2} and \eqref{eq:RL-d2} are given by product with the class $\delta F\in\Ext^1_{\caO_X}\left(L,\caE_F\right)$, it turns out that $\lambda_{K_X^2\otimes L^2}$ is simply the inclusion of the kernel of $H^1\left(\caE_F\otimes K_X^2\otimes L\right)\to H^1\left(\Sigma_L\otimes K_X^2\otimes L\right)$ into $H^1\left(\caE_F\otimes K_X^2\otimes L\right)$.

        \item The assertion follows directly from Proposition \ref{prop:duality-K+L} and the fact that
        $$H^1\left(\Omega_X^1\right)\cong H^1\left(\caO_X\left(-2,0\right)\right)\oplus H^1\left(\caO_X\left(0,-2\right)\right) \cong\mC^2.$$
    \end{enumerate}
\end{proof}

We are now almost ready to prove the announced upper bound for the dimension of $D\subseteq\mP\left(J_{2d-4,2e-4}\right)$. To this aim, we adapt some ideas from \cite{FP-plane-curves} to construct a sufficiently high-dimensional linear subspace in $\mP\left(J_{2d-4,2e-4}\right)$ disjoint from $D$. \ToDo{}

For a fixed $G\in J_L\subseteq S_{d,e}$ with $d,e\geq 3$ consider the following map:
\begin{eqnarray} \label{eq:mu-P1xP1}
\mu_G\colon S_{d-4,e-4}\oplus S_{d-4,e-4} & \longrightarrow & S_{2d-4,2e-4} \\
\left(A,B\right) & \mapsto & AF+BG.
\end{eqnarray}

Note that, since both $F,G\in J_L$, it holds $\im\mu\subseteq J_{K_X^2\otimes L^2}=J_{2d-4,2e-4}$ (for this equality we need $d,e\geq 3$).

\begin{theorem} \label{thm:upper-bound-dim-D-P1xP1}
Suppose $d,e\geq 3$. For a general $G\in J_L=J_{d,e}$ (in particular, $G\not\in\mC F$) it holds:
\begin{enumerate}
\item $\ker\mu_G=0$, and
\item $D\cap\mP\left(\im\mu_G\right)=\emptyset$.
\end{enumerate}
\end{theorem}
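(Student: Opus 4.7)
Part (1) will follow from the irreducibility of $F$ in the bigraded UFD $\mC[x_0,x_1,y_0,y_1]$ (itself a consequence of $Z$ being smooth and irreducible): from $AF+BG=0$ one has $F\mid BG$, and since $G$ has the same bidegree as $F$ but $G\notin\mC F$, we obtain $F\nmid G$, hence $F\mid B$ by unique factorization. The bidegree $(d-4,e-4)$ of $B$ is strictly smaller than that of $F$, so $B=0$ and then $A=0$. Note that if $d=3$ or $e=3$ then $S_{d-4,e-4}=0$ already, so both assertions are trivial, and I may from now on assume $d,e\geq 4$.

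For Part (2) I argue by contradiction, reducing everything to the zero-dimensional complete intersection $\Gamma:=V(F,G)\subseteq X$. Suppose $\alpha\beta=AF+BG$ with $\alpha,\beta\in S_{d-2,e-2}\setminus\{0\}$ and $(A,B)\neq 0$. If $B=0$ then $AF=\alpha\beta$, and as in Part (1) the irreducibility of $F$ together with the bidegree inequality $(d-2,e-2)<(d,e)$ forces $\alpha=0$ or $\beta=0$; hence $B\neq 0$ and $\alpha\beta\in(F,G)$. For a general $G$, $F$ and $G$ form a regular sequence (since $F$ is irreducible and $G\notin(F)$), so $\Gamma$ is reduced and consists of exactly $2de$ points, and the condition $\alpha\beta\in(F,G)$ translates into $\alpha(p)\beta(p)=0$ at every $p\in\Gamma$, i.e.\ $\Gamma\subseteq V(\alpha)\cup V(\beta)$.

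The crucial estimate to prove is: for generic $G\in J_{d,e}$, every nonzero $\alpha\in S_{d-2,e-2}$ vanishes at at most $(d-1)(e-1)-1$ points of $\Gamma$. Granting this, the cover $\Gamma\subseteq V(\alpha)\cup V(\beta)$ gives
\[
2de=|\Gamma|\leq 2\bigl((d-1)(e-1)-1\bigr)=2de-2d-2e,
\]
the desired contradiction. The foundation of the estimate is Koszul: tensoring the Koszul resolution of $\caO_\Gamma$ by $\caO_X(d-2,e-2)$ and using $H^0(\caO_X(-2,-2))=0$ together with $H^1(\caO_X(-d-2,-e-2))=0$ yields an injection $V_G:=S_{d-2,e-2}\hookrightarrow H^0(\caO_\Gamma)\cong\mC^{2de}$ as a $(d-1)(e-1)$-dimensional subspace. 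The estimate is then the assertion that $V_G$ sits in maximally general position with respect to the coordinate subspaces of $\mC^{2de}$ indexed by $\Gamma$: the evaluation $V_G\to\mC^S$ is an isomorphism for every $S\subseteq\Gamma$ with $|S|=(d-1)(e-1)$.

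The main obstacle is precisely this general-position property, which does not follow from Koszul injectivity alone. This is where Theorem~\ref{thm:J-d-e}(2) enters essentially: the base-point-freeness and birationality of the restricted linear system $|J_{d,e\mid Z}|$ guarantee that the configurations $\Gamma_G=V(G)\cap Z$ move sufficiently richly on $Z$ as $G$ varies in $J_{d,e}$. A Bertini-type/semicontinuity argument, combined with a dimension count on the incidence variety of pairs $(G,S)$ for which $V_G\to\mC^S$ fails to be an isomorphism, should then show that the bad locus is a proper closed subvariety of $J_{d,e}$; equivalently, it suffices to exhibit a single $G_0$ for which $V_{G_0}$ is in general position, after which the openness of the condition yields the statement for general $G$.
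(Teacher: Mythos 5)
Your Part (1) is essentially the paper's own argument and is fine, as is the observation that everything is vacuous when $d=3$ or $e=3$ (since then $S_{d-4,e-4}=0$). For Part (2) your reduction is sound: restricting $AF+BG=\alpha\beta$ to the reduced complete intersection $\Gamma=V(F,G)$ of $2de$ points does give $\Gamma\subseteq V(\alpha)\cup V(\beta)$, the injection $S_{d-2,e-2}\hookrightarrow H^0(\caO_\Gamma)$ is correct (here $\dim S_{d-2,e-2}=(d-1)(e-1)=g(Z)$ and $S_{d-2,e-2}\cong H^0(K_Z)$), and the count $2de\leq 2(g-1)$ would indeed be the desired contradiction. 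The problem is that the entire proof rests on your ``crucial estimate'' --- that every $g$-element subset of $\Gamma$ imposes independent conditions on $|K_Z|$ --- and for it you offer only a programme, not an argument. None of the three tools you invoke delivers it: semicontinuity only shows the bad locus is closed inside the locus of $G$ with $(G_{\mid Z})$ reduced, not that it is proper; a dimension count on the incidence variety of pairs $(G,S)$ is circular, because the projection to $G$ has finite fibres, so bounding the dimension of that incidence variety is equivalent to bounding the dimension of its image, which is exactly what is to be proved; and ``exhibit a single $G_0$'' is not something one can do by inspection, since it would require checking all $\binom{2de}{g}$ subsets of $\Gamma_{G_0}$ against the $g$-dimensional canonical system. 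Note also that for most $(d,e)$ the configurations $\Gamma_G$ sweep out only a low-dimensional family inside $\mathrm{Sym}^{2de}(Z)$, so no genericity-of-points argument applies.

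The gap is fillable, and the missing ingredient is the same Uniform Position Theorem that the paper uses: by Theorem \ref{thm:J-d-e}(2) the system $\left|J_{d,e\mid Z}\right|$ is base-point free and maps $Z$ birationally onto a nondegenerate curve, so the monodromy on the points of a general $\Gamma$ is the full symmetric group. Consequently the number $c_k$ of conditions imposed on $|K_Z|$ by a $k$-subset of a general $\Gamma$ depends only on $k$; if $c_{k+1}=c_k$ for some $k$ then $H^0(K_Z-S)=H^0(K_Z-\Gamma)$ for every $k$-subset $S$, whence $c_k=c_{2de}$, and since $\deg\Gamma=2de>2g-2$ gives $c_{2de}=g$, one gets $c_k=\min(k,g)$ and in particular $c_g=g$, which is your estimate. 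With this supplied, your route works and is genuinely different from the paper's: the paper applies the monodromy directly to the factorizations $AF+BG=\alpha\beta$, lifting loops through the incidence variety $I$ to produce $m+1$ holomorphic $1$-forms with a triangular vanishing pattern at $p_1,\ldots,p_{m+1}$, hence linearly independent, contradicting $m\geq de>g-1$; it never needs general position of $\Gamma$ with respect to $|K_Z|$. As submitted, however, your proposal leaves its central claim unproved, and the methods you propose for it would not succeed as stated.
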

\begin{proof}
\begin{enumerate}
\item Suppose $\mu_G\left(A,B\right)=AF+BG=0\in S_{2d-4,2e-4}=H^0\left(K_X^2\otimes L^2\right)$. Then the restriction to $Z=\left\{F=0\right\}$ is also
$$B_{\mid Z}G_{\mid Z}\equiv 0\in H^0\left(\left(K_X^2\otimes L^2\right)_{\mid Z}\right).$$
Since $G_{\mid Z}\not\equiv 0$ (because $G$ is not a multiple of  $F$), it must hold $B_{\mid Z}\equiv 0$, i.e. $B$ is a multiple of $F$. But since $B\in S_{d-4,e-4}$ has smaller bidegree than $F$, it must be $B=0$. Thus also $FA=0\in S_{2d-4,2e-4}$, and thus $A=0$ because the map $S_{d-4,e-4}\stackrel{\cdot F}{\longra}S_{2d-4,2e-4}$ is injective.

\item Consider the set
\begin{equation} \label{eq:I-upper-bound}
I=\left\{\left(A,B,\alpha,\beta,G\right)\,\left|\,\begin{array}{l}
A,B\in S_{d-4,e-4},\alpha,\beta\in H^0\left(K_X\otimes L\right)\setminus\left\{0\right\}=S_{d-2,e-2}\setminus\left\{0\right\}\\
G\in J_{d,e}\setminus \left\langle F\right\rangle, AF+BG=\alpha\beta
\end{array}\right\}\right.
\end{equation}
which is a quasi-projective subvariety of $S_{d-4,e-4}\times S_{d-4,e-4}\times S_{d-2,e-2}\times S_{d-2,e-2}\times J_{d,e}$.

By definition of $D$, it holds
$$D\cap\mP\left(\im\mu_G\right)\neq\emptyset$$
if and only if $G\in\pi\left(I\right)$, where $\pi\colon I\to J_{d,e}\setminus \mC F$ denotes the projection $\left(A,B,\alpha,\beta,G\right)\mapsto G$.

We want thus to show that $\pi$ is not dominant, for which we can replace $I$ by its (open, dense) smooth subset. Assume from now on that $\pi$ is dominant.

By Theorem \ref{thm:J-d-e}, the restriction of the Jacobian $J_{d,e}$ to $Z$ induces a birational map onto a non-degenerate projective curve $C\subseteq\mP^r$ for certain $r\leq\dim J_{d,e}-2$. The pull-backs of the hyperplane sections of $\mP^r$ are by construction the vanishing divisors $\left(G_{\mid Z}\right)$. Let $U$ denote the subset $\left\{G\in J_{d,e}\setminus\mC F\,\mid\,\left(G_{\mid Z}\right)\text{ is reduced}\right\}$. Given a fixed $G\in U$ with $\left(G_{\mid C}\right)=p_1+p_2+\ldots+p_n$, the fundamental group $\pi_1\left(U,G\right)$ induces a monodromy action on $\left\{p_1,\ldots,p_n\right\}$, i.e. there is a group homomorphism
\begin{equation} \label{eq:monodromy-P1-P1}
\rho\colon\pi_1\left(U,G\right)\to\Sym\left(\left\{p_1,\ldots,p_n\right\}\right).
\end{equation}
By the Uniform Position Theorem \cite[Lemma in p. 111]{ACGH}, the monodromy group is the full symmetric group, i.e. the group homomorphism \eqref{eq:monodromy-P1-P1} is surjective.

Replacing $U$ by a non-empty Zariski-open subset the fundamental group does not change, hence we can assume that $U$ is contained in $\pi\left(I\right)$ (recall that we are assuming that $\pi$ is dominant). Replacing $I$ by $\pi^{-1}\left(U\right)$ we can assume $\pi\colon I\ra U$ is a surjective morphism of smooth (quasi-projective) varieties.

Fix $G\in U$ and set $\left(G_{\mid Z}\right)=p_1+\ldots+p_n$, where $n=\deg\caO_Z\left(d,e\right)=2de>2de-2d-2e=2g\left(Z\right)-2$. By the definition of $U$, there are $A,B,\alpha,\beta$ such that $AF+BG=\alpha\beta$. The restriction to $Z=\left\{F=0\right\}$ gives then
$$B_{\mid Z}G_{\mid Z}=\alpha_{\mid Z}\beta_{\mid Z},$$
so $p_1+\ldots+p_n\leq \left(\alpha_{\mid Z}\right)+\left(\beta_{\mid Z}\right)$. Up to reordering the indices of the points and switching $\alpha$ and $\beta$, we can assume $\alpha_{\mid Z}\left(p_i\right)=0$ for $i=1,\ldots,m\geq\frac{n}{2}$, and $\alpha_{\mid Z}\left(p_i\right)\neq 0$ for $i=m+1,\ldots,n$. Moreover since $\alpha_{\mid Z}\in H^0\left(K_Z\right)$ and thus $\deg\left(\alpha_{\mid Z}\right)=2g\left(Z\right)-2<n$, it holds $m<n$.

Now for every $i=1,\ldots,m$ there is a closed path $\gamma_i$ on $U$ based at $G$ whose monodromy action permutes $p_i$ and $p_{m+1}$ and leaves all other points fixed. We can lift this path to an open path on $I$, starting on $\left(A,B,\alpha,\beta,G\right)$, i.e. we can find a continuous map
$$\tilde{\gamma_i}=\left(\tilde{A},\tilde{B},\tilde{\alpha},\tilde{\beta},\tilde{G}\right)\colon\left[0,1\right]\to I$$
with $\tilde{\gamma_i}\left(0\right)=\left(A,B,\alpha,\beta,G\right)$ and $\tilde{A}\left(t\right)F+\tilde{B}\left(t\right)\tilde{G}\left(t\right)=\tilde{\alpha}\left(t\right)\tilde{\beta}\left(t\right)$ for every $t\in\left[0,1\right]$.

Set $\alpha_i:=\tilde{\alpha}\left(1\right),\beta_i:=\tilde{\beta}\left(1\right), A_i:=\tilde{A}\left(1\right)$ and $B_i:=\tilde{B}\left(1\right)$, which satisfy $A_iF+B_iG=\alpha_i\beta_i$. Thus restricting to $Z$ we obtain
$$p_1+\ldots+p_n=\left(G_{\mid Z}\right)\leq \left({B_iG}_{\mid Z}\right)=\left(\alpha_{i\mid Z}\right)+\left(\beta_{i\mid Z}\right).$$
Moreover, it holds $\alpha_{i\mid Z}\left(p_{m+1}\right)=\alpha_{\mid Z}\left(p_i\right)=0$ and $\alpha_{i\mid Z}\left(p_j\right)=\alpha_{\mid Z}\left(p_j\right)=0$ for every $j=1,\ldots,m$, $j\neq i$, while $\alpha_{i\mid Z}\left(p_i\right)=\alpha_{\mid Z}\left(p_{m+1}\right)\neq 0$.

In this way, we obtain $\alpha_0=\alpha,\alpha_1,\ldots,\alpha_m$, whose restrictions to $Z$ are linearly independent holomorphic $1$-forms, and hence $m+1\leq g\left(Z\right)$. But this is a contradiciton, since
$$n\leq 2m \leq 2g\left(Z\right)-2<n.$$

\end{enumerate}
\end{proof}

As announced, we can use the previous theorem to find an upper bound on $\dim D$.

\begin{corollary} \label{cor:upper-bound-dim-D-P1-P1}
If $d,e\geq 3$ it holds
\begin{align} \label{eq:upper-bound-dim-D-P1-P1}
\dim D\leq &\, de-d-e-4.
\end{align}
\end{corollary}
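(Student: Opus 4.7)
The plan is to combine Theorem \ref{thm:upper-bound-dim-D-P1xP1} (which produces a large linear subspace of $J_{2d-4,2e-4}$ disjoint from $D$) with Lemma \ref{lem:duality-P1xP1}(1) (which identifies the ambient projective space of $D$) via a standard linear-projection argument, and then close the bound with a dimension count that uses the Euler relations among the generators of $J_{d,e}$.

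Since $d,e\geq 3$, Lemma \ref{lem:duality-P1xP1}(1) says that $\lambda_{K_X^2\otimes L^2}$ is an isomorphism, so $\ker\lambda = J_{2d-4,2e-4}$ and consequently $D\subseteq\mP\left(J_{2d-4,2e-4}\right)$. Fix a general $G\in J_{d,e}\setminus\mC F$ as in Theorem \ref{thm:upper-bound-dim-D-P1xP1}. Because both $F$ and $G$ lie in $J_{d,e}$, the image of $\mu_G$ is contained in $J_{2d-4,2e-4}$; part (1) of the theorem says $\mu_G$ is injective, so $\dim\im\mu_G = 2\dim S_{d-4,e-4} = 2(d-3)(e-3)$, and part (2) says $\mP\left(\im\mu_G\right)$ is disjoint from $D$. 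The linear projection $\mP\left(J_{2d-4,2e-4}\right)\dashrightarrow\mP\left(J_{2d-4,2e-4}/\im\mu_G\right)$ is therefore regular on $D$, which yields
$$\dim D \leq \dim J_{2d-4,2e-4} - 2(d-3)(e-3) - 1.$$

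It remains to bound $\dim J_{2d-4,2e-4}$ from above. Using the isomorphism $R_{2d-4,2e-4}\cong R_{d,e}^*$ (again Lemma \ref{lem:duality-P1xP1}(1)), one has $\dim J_{2d-4,2e-4} = (2d-3)(2e-3) - \dim R_{d,e}$. Lemma \ref{lem:J-P1-P1} exhibits $J_{d,e}$ as spanned by the eight elements $x_iF_{x_j},\;y_iF_{y_j}$, which satisfy at least the single Euler relation
$$\tfrac{1}{d}\left(x_0 F_{x_0}+x_1 F_{x_1}\right)=\tfrac{1}{e}\left(y_0 F_{y_0}+y_1 F_{y_1}\right),$$
giving $\dim J_{d,e}\leq 7$ and hence $\dim R_{d,e}\geq (d+1)(e+1)-7$. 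Substituting produces
$$\dim J_{2d-4,2e-4}\leq (2d-3)(2e-3)-(d+1)(e+1)+7 = 3de-7d-7e+15,$$
and feeding this into the displayed projection bound yields exactly $\dim D\leq de-d-e-4$, as claimed.

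Since both ingredients (Theorem \ref{thm:upper-bound-dim-D-P1xP1} and Lemma \ref{lem:duality-P1xP1}(1)) are already established, the remaining work is purely a bookkeeping verification and no genuine obstacle is present. The only subtle point worth noting is that the sharp bound $de-d-e-4$ arises precisely because the Euler relations provide a single non-trivial linear relation among the eight generators of $J_{d,e}$ — one relation more would weaken the bound, one fewer would strengthen it.
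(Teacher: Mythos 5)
Your argument is correct and is essentially the paper's own proof: the same disjointness statement from Theorem \ref{thm:upper-bound-dim-D-P1xP1} combined with $\dim\im\mu_G=2\dim S_{d-4,e-4}$, the identification $\dim R_{2d-4,2e-4}=\dim R_{d,e}$ from Lemma \ref{lem:duality-P1xP1}, and the bound $\dim J_{d,e}\leq 7$, with identical arithmetic. The only cosmetic difference is that you justify $\dim J_{d,e}\leq 7$ via the eight generators of \eqref{eq:generators-J-d-e} and the Euler relation \eqref{eq:Euler-P1xP1}, whereas the paper uses $\dim J_{d,e}\leq h^0\left(\Sigma_L\right)=7$; both are valid.
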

\begin{proof}
Let $G$ be general, so that $D\cap\mP\left(\im\mu_G\right)=\emptyset\subseteq\mP\left(J_{2d-4,2e-4}\right)$. Then it must hold $\dim D+\dim\mP\left(\im\mu_G\right)<\dim\mP\left(J_{2d-4,2e-4}\right)$, and hence
\begin{multline}
\dim D<\dim J_{2d-4,2e-4}-\dim\im\mu_G=\left(\dim S_{2d-4,2e-4}-\dim R_{2d-4,2e-4}\right)-2\dim S_{d-4,e-4}=\\
\stackrel{(*)}{=}\left(2d-3\right)\left(2e-3\right)-\dim R_{d,e}-2\left(d-3\right)\left(e-3\right)=2de-9-\left(\dim S_{d,e}-\dim J_{d,e}\right)=\\
=2de-9-\left(d+1\right)\left(e+1\right)+\dim J_{d,e}=de-d-e-10+\dim J_{d,e}\leq\\
\leq de-d-e-10+h^0\left(\Sigma_L\right)=de-d-e-3.
\end{multline}
Note that in $(*)$ we have used $\dim R_{2d-4,2e-4}=\dim R_{d,e}$, i.e. $\dim R_{K_X^2\otimes L^2}=\dim R_L$, which follows from $d,e\geq 3$ and Lemma \ref{lem:duality-P1xP1}.
\end{proof}

We are now ready to prove the final result of the paper.

\begin{theorem} \label{thm:IVHS-maximal-P1-P1}
For any $d,e\geq 1$, $L=\caO_X\left(d,e\right)$ has maximal infinitesimal variation at every smooth $Z\in\left|L\right|$.
\end{theorem}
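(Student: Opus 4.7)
The plan is to argue by contradiction, directly combining the lower bound of Theorem \ref{thm:lower-bound-dim-D} with the upper bound of Corollary \ref{cor:upper-bound-dim-D-P1-P1}, after first disposing of the small-bidegree cases where one of $d,e$ is at most $2$.

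First I would reduce to $d,e\geq 3$. When $d=1$ (or symmetrically $e=1$), a smooth curve $Z\in|L|$ has bidegree $(1,e)$, is isomorphic to $\mP^1$ (since projection onto the first factor has degree $1$), hence $g(Z)=0$ and $H^0(Z,\Omega_Z^1)=0$, so maximal infinitesimal variation is vacuous. When $d=e=2$, a smooth $Z$ is an elliptic curve, and $g(Z)-q(X)=1$; since $H^0(K_Z)=\mC$ and $H^1(\caO_Z)=\mC$, maximality reduces to finding a single $\tau\in H^0(L)$ with $\xi_\tau\neq 0\in H^1(T_Z)$. Because the linear system of smooth $(2,2)$-curves dominates $\mathcal{M}_1$ via its $j$-invariant, the Kodaira-Spencer map $KS\colon R_L\to H^1(T_Z)$ of Definition \ref{df:KS-RL} is nonzero, so any generic $\tau$ works. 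The mixed small cases (one of $d,e$ equal to $2$, the other $\geq 3$) would need either a symmetric refinement of Corollary \ref{cor:upper-bound-dim-D-P1-P1} (accounting for the fact that $\lambda_{K_X^2\otimes L^2}$ is only injective, not an isomorphism, by Lemma \ref{lem:duality-P1xP1}(ii)), or a direct hyperelliptic analysis; I would treat these by adapting the counting argument of Corollary \ref{cor:upper-bound-dim-D-P1-P1} using the injective, rather than bijective, $\lambda_{K_X^2\otimes L^2}$.

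For the main case $d,e\geq 3$, assume for contradiction that $L$ does not have maximal infinitesimal variation at some smooth $Z\in|L|$. On the one hand, Theorem \ref{thm:lower-bound-dim-D} applied to $X=\mP^1\times\mP^1$ (for which $p_g(X)=h^0(\caO_X(-2,-2))=0$) yields
\begin{equation}
\dim D \;\geq\; h^0(K_X\otimes L)+p_g(X)-1 \;=\; h^0(\caO_X(d-2,e-2))-1 \;=\; (d-1)(e-1)-1 \;=\; de-d-e.
\end{equation}
On the other hand, Corollary \ref{cor:upper-bound-dim-D-P1-P1} gives
\begin{equation}
\dim D \;\leq\; de-d-e-4.
\end{equation}
Together these force $de-d-e\leq de-d-e-4$, an absurdity. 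Hence no such $Z$ can exist, and $L$ has maximal infinitesimal variation at every smooth $Z\in|L|$.

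The main obstacle has actually already been overcome by the preceding machinery: Theorem \ref{thm:lower-bound-dim-D} (which packages Lemma \ref{lem:alpha2} and the incidence/monodromy geometry of $Y,W,I$) and Theorem \ref{thm:upper-bound-dim-D-P1xP1} (which relies on Theorem \ref{thm:J-d-e} about the birationality of the linear system $|J_{d,e\mid Z}|$ and the Uniform Position Theorem). The final step here is essentially a numerical comparison, with a comfortable slack of $4$ in the main range. The only subtle point I would double-check is that the lower bound in Theorem \ref{thm:lower-bound-dim-D} is applied at exactly the offending smooth $Z$, and that the duality identifications of Lemma \ref{lem:duality-P1xP1}(i) (needed for Corollary \ref{cor:upper-bound-dim-D-P1-P1} to make sense as stated) are valid in the range $d,e\geq 3$, which they are.
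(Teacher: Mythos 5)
Your main case $d,e\geq 3$ is exactly the paper's argument: assume non-maximality at $Z$, invoke Theorem \ref{thm:lower-bound-dim-D} with $p_g\left(\mP^1\times\mP^1\right)=0$ to get $\dim D\geq de-d-e$, and contradict Corollary \ref{cor:upper-bound-dim-D-P1-P1}. The case $d=1$ or $e=1$ is also handled as in the paper. The genuine gap is the range $\min\left(d,e\right)=2$, $\max\left(d,e\right)\geq 3$, which you explicitly leave unexecuted, offering two alternatives without carrying either out. The specific adaptation you name --- redoing the counting of Corollary \ref{cor:upper-bound-dim-D-P1-P1} --- would in fact fail as stated: for $d=2$ the space $S_{d-4,e-4}=S_{-2,e-4}$ is zero, so the map $\mu_G$ of \eqref{eq:mu-P1xP1} is identically zero and $\mP\left(\im\mu_G\right)=\emptyset$ carries no information; moreover Theorem \ref{thm:upper-bound-dim-D-P1xP1} genuinely uses $d,e\geq 3$ (e.g.\ for $\im\mu_G\subseteq J_{2d-4,2e-4}$ and for Lemma \ref{lem:duality-P1xP1}(1)). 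There is also a smaller issue in your $d=e=2$ case: dominance of $\left|L\right|\dashrightarrow\caM_1$ only gives that $KS$ is nonzero at a \emph{general} smooth $Z$, whereas the theorem asserts maximality at \emph{every} smooth $Z$; one would need to add that $H^1\left(T_{X\mid Z}\right)=0$ forces surjectivity of $H^0\left(N_{Z/X}\right)\to H^1\left(T_Z\right)$ at every smooth member.

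The paper closes the whole range $d=2$, $e\geq 2$ uniformly and quite differently: since $H^0\left(K_X\right)=0$ one has $R_{K_X\otimes L}=S_{0,e-2}$, and a degree count shows $J_{0,2e-4}=0$, so $R_{K_X^2\otimes L^2}=S_{0,2e-4}$. If $\alpha$ lies in the kernel of $\varphi_{K_X\otimes L}\left(\tau\right)$ for a maximal-rank $\tau$, Lemma \ref{lem:alpha2} gives $\lambda_{K_X^2\otimes L^2}\left(\alpha^2\right)=0$, and the injectivity of $\lambda_{K_X^2\otimes L^2}$ from Lemma \ref{lem:duality-P1xP1}(2) forces $\alpha^2=0$ in the polynomial ring, hence $\alpha=0$; so $\varphi_{K_X\otimes L}\left(\tau\right)$ is injective and Corollary \ref{cor:cup-product-Jacobian-rings} gives maximality. (Equivalently, in this range $\ker\lambda=0$, so the variety $D$ is empty and the lower bound of Theorem \ref{thm:lower-bound-dim-D} is already violated.) You should replace your sketch for the mixed case by one of these two concrete arguments; as it stands the proposal does not prove the theorem for bidegrees $\left(2,e\right)$ with $e\geq 3$.
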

\begin{proof}
Let $Z\in\left|L\right|$ be a smooth curve defined by a polynomial $F\in H^0\left(L\right)=S_{d,e}$. Let's consider first the cases of smaller degrees.

If $d=1$ or $e=1$, $Z$ is a rational curve, hence the Hodge structure is trivial and there is nothing to prove.

Suppose now that $d=2$ and $e\geq 2$. Since $H^0\left(K_X\right)=0$, it holds $R_{K_X\otimes L}=H^0\left(K_X\otimes L\right)=S_{0,e-2}$. Moreover it holds $J_{K_X^2\otimes L^2}=J_{0,2e-4}=0$ because the Jacobian ideal is generated in degrees $\left(d-1,e\right)=\left(1,e\right)$ and $\left(d,e-1\right)=\left(2,e-1\right)$. This means that $R_{K_X^2\otimes L^2}=H^0\left(K_X^2\otimes L^2\right)=S_{0,2e-4}$.

Suppose that $\alpha\in\ker\varphi_{K_X\otimes L}\left(\tau\right)$ for a general $\tau\in H^0\left(L\right)=S_{2,e}$, so that $\lambda_{K_X^2\otimes L^2}\left(\alpha^2\right)=0\in R_L^*$ by Lemma \ref{lem:alpha2}. But the injectivity of $\lambda_{K_X^2\otimes L^2}$ (Lemma \ref{lem:duality-P1xP1}) implies $\alpha^2=0\in S_{0,2e-4}$, and thus $\alpha=0$.

We can now consider the case $d,e\geq 3$. If $L$ does not have maximal infinitesimal variation at $Z$, then Theorem \ref{thm:lower-bound-dim-D} implies
\begin{equation} \label{eq:lower-bound-dim-D-P1-P1}
\dim D\geq h^0\left(K_X\otimes L\right)+p_g\left(X\right)-1=h^0\left(\caO_X\left(d-2,e-2\right)\right)-1=\left(d-1\right)\left(e-1\right)-1=de-d-e,
\end{equation}
which contradicts Corollary \ref{cor:upper-bound-dim-D-P1-P1}.
\end{proof}

\begin{remark} \label{rmk:thm-upper-bound-more-general}
Note that a very important point in the proof of Theorem \ref{thm:upper-bound-dim-D-P1xP1} is that the generalized Jacobian ideal in degree precisely $L$ (i.e. the image of $\widetilde{d\sigma}\colon H^0\left(T_X\right)\to H^0\left(L_{\mid Z}\right)$) induces a birational embedding of $Z$ in a projective space. This property might also hold for more general surfaces, giving some hope that this approach might work in a few more cases.
\end{remark}

\bibliographystyle{alpha}

\end{document}